\documentclass[11pt, a4paper]{article}

\usepackage{amssymb}
\usepackage{mathtools}
\usepackage[implicit=false]{hyperref}

\usepackage[backend=biber, sorting=nyt, url=false]{biblatex}
\AtEveryBibitem{\clearlist{language}}
\addbibresource{TriangleBiadjunctions.bib}

\usepackage{tikz-cd}
\usetikzlibrary{nfold}
\tikzcdset{arrows=nfold}

\usepackage{amsthm}
\newtheorem{theorem}{Theorem}
\newtheorem{proposition}{Proposition}
\newtheorem{lemma}{Lemma}
\newtheorem{corollary}{Corollary}
\newtheorem{definition}{Definition}

\newcommand{\cat}[1]{\mathbf{#1}}
\newcommand{\bicat}[1]{\mathfrak{#1}}
\newcommand{\bifun}[1]{\mathbf{#1}}
\newcommand{\id}{\mathnormal{1}}           	
\newcommand{\Id}{1}                         		
\newcommand{\ID}{\bifun{1}}                   		

\DeclareMathOperator{\Hom}{Hom}
\newcommand{\biHom}{\cat{Hom}}

\makeatletter
\newcommand{\xRrightarrow}[2][]{\ext@arrow 0359\Rrightarrowfill@{#1}{#2}}
\newcommand{\Rrightarrowfill@}{\arrowfill@\equiv\equiv\Rrightarrow}
\makeatother

\makeatletter
\def\rightarrowfill@script{%
    \arrowfill@{\scriptstyle\relbar}{\scriptstyle\relbar}{\scriptstyle\rightarrow}}
\newcommand{\xrightrightarrows}[2][]{\mathrel{%
  \raise.54ex\hbox{%
    $\ext@arrow 3095\rightarrowfill@script{\phantom{#1}}{#2}$}%
  \setbox0=\hbox{%
    $\ext@arrow 0359\rightarrowfill@script{#1}{\phantom{#2}}$}%
  \kern-\wd0 \lower.24ex\box0}}
\makeatother

\newcommand*{\isoto}[1][]{\xrightarrow[#1]{\! \smash{\raisebox{-0.352ex}{\ensuremath{\scriptstyle\sim}}}}}
\newcommand*{\Isoto}[1][]{\xRightarrow[#1]{\smash{\raisebox{-0.352ex}{\ensuremath{\scriptstyle\sim}}}}}
\newcommand*{\Iisoto}[1][]{\xRrightarrow[#1]{\! \smash{\raisebox{-0.352ex}{\ensuremath{\scriptstyle\sim}}}}}

\newcommand{\InvCone}{\cat{InvCone}}

\DeclareMathOperator{\Kl}{\bicat{Kl}}

\newcommand{\op}{^{\mathrm{op}}}
\newcommand{\co}{^{\mathrm{co}}}

\newcommand{\Alpha}{\mathrm{A}}
\newcommand{\Epsilon}{\mathrm{E}}
\newcommand{\Eta}{\mathrm{H}}
\newcommand{\Rho}{\mathrm{P}}
\newcommand{\Tau}{\mathrm{T}}
\newcommand{\Chi}{\mathrm{X}}
\newcommand{\Kappa}{\mathrm{K}}

\title{On biadjoint triangles with additional modifications}
\author{Gabriel Merlin
\thanks{Istituto Grothendieck ETS, Corso Statuto 24, 12084 Mondovì, Italy.\\
\textit{E-mail address:} \texttt{gabriel.merlin@ctta.igrothendieck.org}}
\textsuperscript{,}%
\thanks{Université Paris-Saclay, CentraleSupélec, Mathématiques et Informatique pour la Complexité et les Systèmes, 91190, Gif-sur-Yvette, France.\\
\textit{E-mail address:} \texttt{gabriel.merlin@centralesupelec.fr}}}


\newsavebox{\tmpcontent}

\begin{document}
\maketitle

\begin{abstract}
Lucatelli Nunes obtained a 2-categorical version of the adjoint triangle theorem of Dubuc using the descent object of a specific diagram.
In some cases, such a diagram can be filled with an extra cell.
We show then how to obtain a biadjoint as an inverter of this additional datum (under suitable hypotheses).
The problem addressed here is slightly different however: we still have a triangle of pseudofunctors but the lifted biadjoint is not the same.
The construction is simplified when the pseudofunctor whose left biadjoint is sought is fully faithful.
As an example, we get the biadjoint of the inclusion pseudofunctor of a bicategory associated to a KZ-monad preserving pseudomonicity.
\end{abstract}

In category theory, several theorems provide sufficient conditions for a functor to have an adjoint.
One of these is the adjoint triangle theorem of \textcite{Dubuc_1968}.
It states that in a commuting (up to a natural isomorphism) triangle of functors
\[\begin{tikzcd}[column sep=small]
    \cat{A} \ar[rr, "F"] \ar[dr, "H" swap, ""{coordinate, name=H}] & & \cat{B} \ar[dl, "G", ""{coordinate, name=G}]
    \ar[from=G, to=H, Rightarrow, shorten=0.2em, bend right, "\sim" sloped] \\
    & \cat{C} &
\end{tikzcd},\]
if both $G$ and $H$ have a left adjoint,
if $G$ is of descent type and some coequalizers exist in $\cat{A}$,
then $F$ also has a left adjoint.

\Textcite{Nunes_2016} proved a 2-dimensional version of this theorem for biadjunction of pseudofunctors between 2-categories.
It relies on the existence of some descent objects
which replace the traditional coequalizers in this 2-dimensional setting.
As an application, he deduced a biadjunction between 2-categories of pseudo-coalgebras of some related pseudocomonads.

In some situations, the truncated simplicial object shaping the descent diagram is equipped with an extra cell.
This notably occurs when studying Kleisli bicategories associated to KZ-monads.
In this paper, we show how to use this additional datum (together with additional hypotheses)
to simplify the construction of the biadjoint.
Our main result is Theorem~\ref{thm:biadj_triangle} stating that
the sought left biadjoint can be obtained as the inverter of this additional modification.

Everything here is done for \emph{bicategories} rather than for \emph{2-categories}.
However, our result holds in a slightly modified triangle of biadjunctions
\[\begin{tikzcd}[column sep=small]
    \bicat{A} \ar[rr, "\bifun{F}"] \ar[dr, "\bifun{H}" swap, ""{coordinate, name=H}] & & \bicat{B} \ar[dl, "\bifun{G}", ""{coordinate, name=G}]
    \ar[from=G, to=H, Rightarrow, shorten=0.2em, bend right, "\sim" sloped] \\
    & \bicat{C} &
\end{tikzcd},\]
because it requires the pseudofunctor $\bifun{F}$ to have a left biadjoint instead of $\bifun{G}$.
The demonstration being mostly of a tricategorical nature,
this mostly accounts for reversing the orientation of the arrows.

In Section~\ref{sec:bicat}, we remind the bicategorical and tricategorical notions used throughout this paper.

In Section~\ref{sec:biadj_triangle}, we present the main theorem of this paper and its proof, which proceeds as follows.
Let us call $\bifun{L}$ (resp. $\bifun{N}$) the left biadjoint of $\bifun{F}$ (resp. $\bifun{H}$) and $\Phi$ the unit of $\bifun{L} \dashv \bifun{F}$.
Two pseudonatural transformations $\Phi_{\bifun{F N}}$ and $\Xi$ from $\bifun{F N}$ to $(\bifun{F L})(\bifun{F N})$ can be constructed from the initial data.
The existence of a modification $\mu$ from $\Phi_{\bifun{F N}}$ to $\Xi$ and of its inverter $(\bifun{M}, \Lambda)$ is then postulated.
A pseudonatural transformation $\ID_{\bicat{C}} \Rightarrow \bifun{G M}$ playing the role of unit
is fabricated in Subsection~\ref{subsec:unit} using the inverter property of $(\bifun{G M}, \bifun{G} \Lambda)$ which is assumed.
In Subsection~\ref{subsec:counit}, an other pseudonatural transformation $\bifun{M G} \Rightarrow \ID_{\bicat{B}}$
is designed as counit after having supposed that $(\ID_{\bicat{C}}, \Phi)$ is the inverter of an additional modification.
Finally, Subsection~\ref{subsec:triangle_laws} presents two invertible modifications translating the triangle laws of the biadjunction.
This last step requires nevertheless additional hypotheses.

Section~\ref{sec:ff_case} focuses on the special situation of a fully faithful $\bifun{G}$,
in which case the statement of our theorem becomes simpler.
As an illustration, we show that,
given a KZ-monad preserving pseudomonic 1-morphisms on a bicategory $\bicat{C}$ having all inverters,
the inclusion pseudofunctor of the full sub-bicategory of $\bicat{C}$ made of the inverters of the structural modification of the KZ-monad has a left biadjoint.

Finally, in Section~\ref{sec:dual}, we dualize our result, providing a version to lift a right biadjoint.

Many constructions developed here involve a lot of algebraic computations.
Those are easy but cumbersome, and will be frequently omitted.
Nevertheless, to ensure the correctness of this work,
all of the theorems have been implemented and checked in the formal proof assistant Coq.
The code relies on the Coq library Unimath \cite{UniMath} (notably its package Bicategories presented in \cite{Ahrens_2019}) and is available at \url{https://github.com/CentreForToposTheoryAndItsApplications/Triangle-of-biadjunctions}.

\section{Bicategorical conventions}\label{sec:bicat}

\subsection{Basics}

Conventions of \cite{Johnson_2021} will mainly be followed.
Strict notions of bicategories or pseudofunctors between them will never be examined.

A \emph{bicategory} consists of a (often large) set of objects,
categories of morphisms between objects, horizontal composition functors,
identity morphisms, invertible associators and invertible unitors satisfying the appropriate axioms.
More precisely,
\begin{definition}
A \emph{bicategory} $\bicat{B}$ consists of:
\begin{itemize}
\item a set $B_0$ whose elements are called \emph{objects};
\item a category $\biHom_{\bicat{B}}(X, Y)$ for each pair of objects $(X, Y)$;
\item for each $X \in B_0$, a particular object $\Id_X$ of $\biHom_{\bicat{B}}(X, X)$
called the \emph{identity 1-morphism} of $X$;
\item for each triple of objects $(X, Y, Z)$, a functor
\[ c_{X, Y, Z} \colon \biHom_{\bicat{B}}(Y, Z) \times \biHom_{\bicat{B}}(X, Y) \longrightarrow \biHom_{\bicat{B}}(X, Z) \]
called the \emph{horizontal composition};
\item for each quadruple of objects $(W, X, Y, Z)$, a natural isomorphism
\[\begin{tikzcd}
    \biHom_{\bicat{B}}(Y, Z) \times \biHom_{\bicat{B}}(X, Y) \times \biHom_{\bicat{B}}(W, X)
        \rar[outer sep=.4em]{\Id_{\biHom(Y, Z)} \times c_{W, X, Y}} \dar[swap]{c_{X, Y, Z} \times \Id_{\biHom(W, X)}} &
    \biHom_{\bicat{B}}(Y, Z) \times \biHom_{\bicat{B}}(W, Y) \dar{c_{W, Y, Z}} \\
    \biHom_{\bicat{B}}(X, Z) \times \biHom_{\bicat{B}}(W, X)
        \rar[swap]{c_{W, X, Z}} \ar[ur, Rightarrow, "\sim" sloped, "\alpha_{W, X, Y, Z}" swap] &
    \biHom_{\bicat{B}}(W, Z)
\end{tikzcd}\]
called the \emph{associator};
\item for each pair of objects $(X, Y)$, natural isomorphisms
\[\begin{tikzcd}
    \biHom_{\bicat{B}}(X, Y)
        \rar[outer sep=.4em]{(\Id_Y, \Id_{\biHom(X, Y)})}
        \ar[dr, bend right=10, "\Id_{\biHom(X, Y)}" swap, ""{coordinate, name=Id}] &
    \biHom_{\bicat{B}}(Y, Y) \times \biHom_{\bicat{B}}(X, Y)
        \dar{c_{X, Y, Y}} \ar[to=Id, bend right=5, Rightarrow, shorten >=0.4em, "\sim" sloped, "\lambda_{X, Y}"] \\
    & \biHom_{\bicat{B}}(X, Y)
\end{tikzcd}\]
and
\[\begin{tikzcd}
    \biHom_{\bicat{B}}(X, Y)
        \rar[outer sep=.4em]{(\Id_{\biHom(X, Y)}, \Id_X)}
        \ar[dr, bend right=10, "\Id_{\biHom(X, Y)}" swap, ""{coordinate, name=Id}] &
    \biHom_{\bicat{B}}(X, Y) \times \biHom_{\bicat{B}}(X, X)
        \dar{c_{X, X, Y}} \ar[to=Id, bend right=5, Rightarrow, shorten >=0.4em, "\sim" sloped, "\rho_{X, Y}"] \\
    & \biHom_{\bicat{B}}(X, Y)
\end{tikzcd}\]
called the \emph{left unitor} and \emph{right unitor} respectively.
\end{itemize}
These data are subject to the two following axioms:
\begin{description}
\item[Unity Axiom] given a triple of objects $(X, Y, Z)$,
for any objects $f$ and $g$ of $\biHom_{\bicat{B}}(X, Y)$ and $\biHom_{\bicat{B}}(Y, Z)$ respectively,
the diagram
\[\begin{tikzcd}[column sep=tiny]
    c_{X, Y, Z}(c_{Y, Y, Z}(g, \Id_Y), f) \ar[rr, "{\alpha_{X, Y, Y, Z}(g, \Id_Y, f)}"]
        \ar[dr, "{c_{X, Y, Z}(\rho_{Y, Z}(g), \id_f)}" swap] &&
    c_{X, Y, Z}(g, c_{X, Y, Y}(\Id_Y, f)) \ar[dl, "{c_{X, Y, Z}(\id_g, \lambda_{X, Y}(f))}"] \\
    & c_{X, Y, Z}(g, f) &
\end{tikzcd}\]
in $\biHom_{\bicat{B}}(X, Z)$ is commutative;
\item[Pentagon Axiom] given a quintuple of objects $(V, W, X, Y, Z)$,
for any objects $f$, $g$, $h$ and $i$ of
$\biHom_{\bicat{B}}(V, W)$, $\biHom_{\bicat{B}}(W, X)$, $\biHom_{\bicat{B}}(X, Y)$ and $\biHom_{\bicat{B}}(Y, Z)$ respectively,
the diagram
\[\sbox{\tmpcontent}{$c_{V, W, Z}(c_{W, X, Z}(c_{X, Y, Z}(i, h), g), f)$}
\begin{tikzcd}[row sep=large, column sep={1.2em,between origins}]
    &&[+\the\dimexpr\wd\tmpcontent/2\relax] c_{V, X, Z}(c_{X, Y, Z}(i, h), c_{V, W, X}(g, f)) \ar[drr, "{\alpha_{V, X, Y, Z}(i, h, c_{V, W, X}(g, f))}"] &[+\the\dimexpr\wd\tmpcontent/2\relax]& \\
    \usebox{\tmpcontent}
        \ar[urr, "{\alpha_{V, W, X, Z}(c_{X, Y, Z}(i, h), g, f)}"]
        \ar[dr, "{c_{V, W, Z}(\alpha_{W, X, Y, Z}(i, h, g), \id_f)}" description] &&&&
    c_{V, Y, Z}(i, c_{V, X, Y}(h, c_{V, W, X}(g, f))) \\[+1.35em]
    & c_{V, W, Z}(c_{W, Y, Z}(i, c_{W, X, Y}(h, g)), f)
        \ar[rr, "{\alpha_{V, W, Y, Z}(i, c_{W, X, Y}(h, g), f)}"{outer sep=.4em}] &&
    c_{V, Y, Z}(i, c_{V, W, Y}(c_{W, X, Y}(h, g), f)) \ar[ur, "{c_{V, Y, Z}(\id_i, \alpha_{V, W, X, Y}(h, g, f))}" description] &
\end{tikzcd}\]
in $\biHom_{\bicat{B}}(V, Z)$ is commutative.
\end{description}
\end{definition}

Throughout the paper, the objects (resp. the morphisms) of a category $\biHom_{\bicat{B}}(X, Y)$
will be called the \emph{1-morphisms} from $X$ to $Y$
(resp. the \emph{2-morphisms} from a 1-morphism to another)
and will be denoted by a single arrow as $f \colon X \to Y$
(resp. by a double arrow).
The identity morphism of an object $f$ of $\biHom_{\bicat{B}}(X, Y)$ will be written $\id_f$.
The composition of the category $\biHom_{\bicat{B}}(X, Y)$ will be called the \emph{vertical composition};
the vertical composite of two 2-morphisms
$\phi_1 \colon f_0 \Rightarrow f_1$ and $\phi_2 \colon f_1 \Rightarrow f_2$
will be denoted by $\phi_2 \circ \phi_1$.

The horizontal composition of two 1-morphisms $f \colon X \to Y$ and $g \colon Y \to Z$
will simply be denoted by $g \circ f$ instead of $c_{X, Y, Z}(g, f)$
and the horizontal composition of 2-morphisms $\phi \colon f_1 \Rightarrow f_2$
and $\psi \colon g_1 \Rightarrow g_2$ by $\psi * \phi$
which goes from $g_1 \circ f_1$ to $g_2 \circ f_2$.

When writing an associator or an unitor, the subscript specifying the objects of $\bicat{B}$ will be systematically omitted.
For example, the right unitor of a morphism $f \colon X \to Y$ will only be written $\rho_f$.
This convention will always apply thereafter
when manipulating structures incorporating functors or natural transformations
indexed by objects of a bicategory.

When composing a sequence of 1-morphisms,
a \emph{left normalized bracketing} will always be assumed.
For example, the 1-morphism defined as the composition
\[ W \xRightarrow{f} X \xRightarrow{g} Y \xRightarrow{h} Z \]
will be $h \circ (g \circ f)$ rather than $(h \circ g) \circ f$.

Pasting diagrams will be frequently used to define composite 2-morphisms of bicategories.
However, there are in general several ways to compose the 2-morphisms appearing in a pasting diagram.
Hopefully, the so-called Mac~Lane's coherence theorem for bicategories \cite[thm. 3.6.6]{Johnson_2021}
ensures that these potential composites are all equal.

Here is the bicategorical analog of the notion of functor.
\begin{definition}
A \emph{pseudofunctor} $\bifun{F}$ from a bicategory $\bicat{B}$ to another $\bicat{C}$ consists of:
\begin{itemize}
\item a map between sets of objects (also written $\bifun{F}$ by metonymy);
\item a functor $\bifun{F}_{X, Y} \colon \biHom_{\bicat{B}}(X, Y) \to \biHom_{\bicat{C}}(\bifun{F}(X), \bifun{F}(Y))$ for each pair of objects $(X, Y)$ of $\bicat{B}$;
\item a 2-isomorphism $\bifun{F}^0_X \colon \Id_{\bifun{F}(X)} \Isoto \bifun{F}(\Id_X)$ for every object $X$ of $\bicat{B}$;
\item an invertible natural transformation
\[\begin{tikzcd}[column sep=scriptsize]
    \biHom_{\bicat{B}}(Y, Z) \times \biHom_{\bicat{B}}(X, Y) \rar{c_{X, Y, Z}} \dar[swap]{\bifun{F}_{Y, Z} \times \bifun{F}_{X, Y}} &
    \biHom_{\bicat{B}}(X, Z) \dar{\bifun{F}_{X, Z}} \\
    \biHom_{\bicat{C}}(\bifun{F}(Y), \bifun{F}(Z)) \times \biHom_{\bicat{C}}(\bifun{F}(X), \bifun{F}(Y)) \rar[swap, outer sep=.4em]{c_{\bifun{F}(X), \bifun{F}(Y), \bifun{F}(Z)}}
        \ar[ur, Rightarrow, "\sim" sloped, "\bifun{F}^2_{X, Y, Z}" swap] &
    \biHom_{\bicat{C}}(\bifun{F}(X), \bifun{F}(Z))
\end{tikzcd}\]
for every triple of objects $(X, Y, Z)$ of $\bicat{B}$;
\end{itemize}
satisfying
\begin{itemize}
\item for any 1-morphism $f \colon X \to Y$ of $\bicat{B}$,
\[ \lambda_{\bifun{F}(f)} = \bifun{F}(\lambda_f) \circ \bifun{F}^2_{\Id_Y, f} \circ \left(\bifun{F}^0_Y * \id_{\bifun{F}(f)}\right),
\enskip
\rho_{\bifun{F}(f)} = \bifun{F}(\rho_f) \circ \bifun{F}^2_{f, \Id_X} \circ \left(\id_{\bifun{F}(f)} * \bifun{F}^0_X\right); \]
\item for any triple of composable 1-morphisms $f \colon W \to X$, $g \colon X \to Y$ and $h \colon Y \to Z$
of $\bicat{B}$,
\[ \bifun{F}^2_{h, g \circ f} \circ \left(\id_{\bifun{F}(h)} * \bifun{F}^2_{g, f}\right) \circ \alpha_{\bifun{F}(h), \bifun{F}(g), \bifun{F}(f)} =
\bifun{F}(\alpha_{h, g, f}) \circ \bifun{F}^2_{h \circ g, f} \circ \left(\bifun{F}^2_{h, g} * \id_{\bifun{F}(f)}\right). \]
\end{itemize}
\end{definition}

In the categorical setting, there is a notion of morphism between two functors sharing the same source and term, namely that of natural transformation.
As expected, it has an analog in the bicategorical setting.
We will depart here from the terminology of \cite{Johnson_2021}, who speaks of strong transformations,
preferring the term of \emph{pseudonatural transformation} as in \cites{Ahrens_2019}{Lack_2000}{Nunes_2016}.
\begin{definition}
A \emph{pseudonatural transformation} $\Phi$ from a pseudofunctor $\bifun{F} \colon \bicat{B} \to \bicat{C}$
to another $\bifun{G}$ consists of:
\begin{itemize}
\item a 1-morphism $\Phi_X \colon \bifun{F}(X) \to \bifun{G}(X)$ for each object $X$ of $\bicat{B}$;
\item a 2-isomorphism
\[\begin{tikzcd}
    \bifun{F}(X) \rar{\bifun{F}(f)} \dar{\Phi_X} &
    \bifun{F}(Y) \dar["\Phi_Y", ""{coordinate, near start, name=PhiY}] \\
    \bifun{G}(X) \rar["\bifun{G}(f)"{name=Gf}] &
    \bifun{G}(Y)
        \ar[from=Gf, to=PhiY, Rightarrow, bend left, shorten >=0.2em, "\sim" sloped, "\Phi_f"{near start}]
\end{tikzcd}\]
for each 1-morphism $f \colon X \to Y$ of $\bicat{B}$;
\end{itemize}
satisfying
\begin{itemize}
\item for any object $X$ of $\bicat{B}$,
\[\begin{tikzcd}[sep=large]
    \bifun{F}(X) \rar[bend left, "\bifun{F}(\Id_X)", ""{coordinate, name=FIdX}] \dar[swap]{\Phi_X} &
    \bifun{F}(X) \dar{\Phi_X} \\
    \bifun{G}(X) \rar[bend right, "\Id_{\bifun{G}(X)}"{name=IdGX}]
        \rar[bend left, "\bifun{G}(\Id_X)"{name=GIdX2}, ""{coordinate, name=GIdX1}]
        \ar[from=IdGX, to=GIdX1, Rightarrow, shorten >=0.2em, "\sim" sloped, "\bifun{G}^0_X" swap]
        \ar[from=GIdX2, to=FIdX, Rightarrow, shorten >=0.2em, "\sim" sloped, "\Phi_{\Id_X}" swap] &
    \bifun{G}(X)
\end{tikzcd} =
\begin{tikzcd}[sep=large]
    \bifun{F}(X) \rar[bend left, "\bifun{F}(\Id_X)", ""{coordinate, name=FIdX}] \dar[swap]{\Phi_X}
        \rar[bend right, "\Id_{\bifun{F}(X)}"{name=IdFX2}, ""{coordinate, name=IdFX1}]
        \ar[dr, bend right=22, "\Phi_X"{description, name=PhiX}]
        \ar[from=IdFX2, to=FIdX, Rightarrow, shorten >=0.2em, "\sim" sloped, "\bifun{F}^0_X" swap]
\ar[from=PhiX, to=IdFX1, bend right=10, Rightarrow, shorten >=0.2em, "\sim" sloped, "\rho_{\Phi_X}^{-1}"{swap, near end}] &
    \bifun{F}(X) \dar{\Phi_X} \\
    \bifun{G}(X) \rar[bend right, "\Id_{\bifun{G}(X)}"]
\ar[to=PhiX, bend right, Rightarrow, "\sim" sloped, "\lambda_{\Phi_X}"{swap, near end}] &
    \bifun{G}(X)
\end{tikzcd};\]
\item for any pair of composable 1-morphisms $f \colon X \to Y$ and $g \colon Y \to Z$ of $\bicat{B}$,
\[\begin{tikzcd}[column sep=small]
    \bifun{F}(X) \ar[rr, "{\bifun{F}(g \circ f)}", ""{coordinate, name=Fgf}] \dar[swap]{\Phi_X} &&
    \bifun{F}(Z) \dar{\Phi_Z} \\
    \bifun{G}(X) \ar[rr, "{\bifun{G}(g \circ f)}"{name=Ggf2}, ""{coordinate, name=Ggf1}]
        \ar[dr, "\bifun{G}(f)" swap]
        \ar[from=Ggf2, to=Fgf, Rightarrow, shorten >=0.2em, "\sim" sloped, "\Phi_{g \circ f}" swap] &&
    \bifun{G}(Z) \\
    & \bifun{G}(Y) \ar[ur, "\bifun{G}(g)" swap]
        \ar[to=Ggf1, Rightarrow, shorten >=0.2em, "\sim" sloped, "\bifun{G}^2_{g, f}" swap] &
\end{tikzcd} =
\begin{tikzcd}
    \bifun{F}(X) \ar[rr, "{\bifun{F}(g \circ f)}", ""{coordinate, name=Fgf}]
        \ar[dr, "\bifun{F}(f)"{description, name=Ff}] \dar[swap]{\Phi_X} &&
    \bifun{F}(Z) \dar{\Phi_Z} \\
    \bifun{G}(X) \ar[dr, "\bifun{G}(f)" swap, ""{coordinate, name=Gf}]
        \ar[from=Gf, to=Ff, Rightarrow, shorten <=0.4em, "\sim" sloped, "\Phi_f" swap] &
    \bifun{F}(Y) \ar[ur, "\bifun{F}(g)"{description, name=Fg}] \dar{\Phi_Y}
        \ar[to=Fgf, Rightarrow, shorten >=0.2em, "\sim" sloped, "\bifun{F}^2_{g, f}" swap] &
    \bifun{G}(Z) \\
    & \bifun{G}(Y) \ar[ur, "\bifun{G}(g)" swap, ""{coordinate, name=Gg}]
        \ar[from=Gg, to=Fg, Rightarrow, shorten <=0.4em, "\sim" sloped, "\Phi_g" swap] &
\end{tikzcd};\]
\item for any 2-morphism $\theta \colon f \Rightarrow g$ of $\bicat{B}$,
\[\begin{tikzcd}[sep=large]
    \bifun{F}(X) \rar[bend left, "\bifun{F}(g)", ""{coordinate, name=Fg}] \dar[swap]{\Phi_X}
        \rar[bend right, "\bifun{F}(f)"{name=Ff2}, ""{coordinate, name=Ff1}]
        \ar[from=Ff2, to=Fg, Rightarrow, shorten >=0.2em, "\sim" sloped, "\bifun{F}(\theta)" swap] &
    \bifun{F}(Y) \dar{\Phi_Y} \\
    \bifun{G}(X) \rar[bend right, "\bifun{G}(f)"{name=Gf}]
        \ar[from=Gf, to=Ff1, Rightarrow, shorten >=0.2em, "\sim" sloped, "\Phi_f" swap] &
    \bifun{G}(Y)
\end{tikzcd} =
\begin{tikzcd}[sep=large]
    \bifun{F}(X) \rar[bend left, "\bifun{F}(g)", ""{coordinate, name=Fg}] \dar[swap]{\Phi_X} &
    \bifun{F}(Y) \dar{\Phi_Y} \\
    \bifun{G}(X) \rar[bend right, "\bifun{G}(f)"{name=Gf}]
        \rar[bend left, "\bifun{G}(g)"{name=Gg2}, ""{coordinate, name=Gg1}]
        \ar[from=Gf, to=Gg1, Rightarrow, shorten >=0.2em, "\sim" sloped, "\bifun{G}(\theta)" swap]
        \ar[from=Gg2, to=Fg, Rightarrow, shorten >=0.2em, "\sim" sloped, "\Phi_g" swap] &
    \bifun{G}(Y)
\end{tikzcd}.\]
\end{itemize}
\end{definition}

The last condition of this definition ensures that, for any tuple of objects $(X, Y)$,
the data of $(\Phi_f)_{f \colon X \to Y}$ defines a natural isomorphism
\[\sbox{\tmpcontent}{$\biHom_{\bicat{B}}(X, Y)$}
\begin{tikzcd}[column sep=large]
    \usebox{\tmpcontent} \rar["\bifun{F}_{X, Y}", ""{coordinate, name=FXY}] \dar{\bifun{G}_{X, Y}} &
    \biHom_{\bicat{C}}(\bifun{F}(X), \bifun{F}(Y)) \dar{\Phi_Y \circ {-}} \\
    \makebox[\wd\tmpcontent][r]{$\biHom_{\bicat{C}}(\bifun{G}(X), \bifun{G}(Y))$} \rar["{-} \circ \Phi_X"{name=PhiX}] &
    \biHom_{\bicat{C}}(\bifun{F}(X), \bifun{G}(Y))
        \ar[from=PhiX, to=FXY, start anchor=north, Rightarrow, shorten >=0.2em, "\sim" sloped, "\Phi_{X, Y}" {swap, near start}]
\end{tikzcd}.\]

In the categorical setting, the construction of morphisms between morphisms stops at the level of natural transformations.
However, in the bicategorical setting, one can go one step further with modifications.
\begin{definition}
A \emph{modification} $\mu$ from a pseudonatural transformation $\Phi \colon \bifun{F} \Rightarrow \bifun{G}$ to another $\Psi$ consists of
a 2-morphism $\mu_X \colon \Phi_X \Rightarrow \Psi_X$ for all objects $X$ of $\bicat{B}$
such that for any 1-morphism $f \colon X \to Y$ of $\bicat{B}$,
\[\begin{tikzcd}[row sep=large, column sep=scriptsize]
    \bifun{F}(X) \rar{\bifun{F}(f)} \dar[bend right, "\Phi_X" swap, ""{coordinate, name=PhiX}] &
    \bifun{F}(Y) \dar[bend left, "\Psi_Y", ""{coordinate, name=PsiY}]
\dar[bend right, "\Phi_Y"{swap, name=PhiY1}, ""{coordinate, name=PhiY2}] \\
    \bifun{G}(X) \rar["\bifun{G}(f)"{name=Gf}] &
    \bifun{G}(Y)
        \ar[from=PhiX, to=PhiY1, Rightarrow, shorten <=0.2em, "\sim" sloped, "\Phi_f" swap]
        \ar[from=PhiY2, to=PsiY, Rightarrow, shorten =0.2em, "\mu_Y" swap]
\end{tikzcd} =
\begin{tikzcd}[row sep=large, column sep=scriptsize]
    \bifun{F}(X) \rar{\bifun{F}(f)} \dar[bend right, "\Phi_X" swap, ""{coordinate, name=PhiX}]
        \dar[bend left, "\Psi_X"{name=PsiX2}, ""{coordinate, name=PsiX1}] &
    \bifun{F}(Y) \dar[bend left, "\Psi_Y", ""{coordinate, name=PsiY}] \\
    \bifun{G}(X) \rar["\bifun{G}(f)"{name=Gf}] &
    \bifun{G}(Y)
        \ar[from=PhiX, to=PsiX1, Rightarrow, shorten =0.2em, "\mu_X" swap]
        \ar[from=PsiX2, to=PsiY, Rightarrow, shorten >=0.2em, "\sim" sloped, "\Psi_f" swap]
\end{tikzcd}.\]
\end{definition}

Given two bicategories $\bicat{A}$ and $\bicat{B}$,
pseudofunctors from $\bicat{A}$ to $\bicat{B}$, pseudonatural transformations between them
and their associate modifications form a new bicategory denoted by $\bicat{B}^{\bicat{A}}$.

The horizontal composition of two pseudonatural transformations
$\Phi \colon \bifun{F} \Rightarrow \bifun{G}$ and $\Psi \colon \bifun{G} \Rightarrow \bifun{H}$
will be denoted by $\Psi \circ \Phi$.
If $\Omega$ is a third composable pseudonatural transformation,
their overall composition is associative only up to an invertible modification
\[ \alpha_{\Omega, \Psi, \Phi} \colon (\Omega \circ \Psi) \circ \Phi \Iisoto \Omega \circ (\Psi \circ \Phi) \]
defined on an object $X$ by taking the associator $\alpha_{\Omega_X, \Psi_X, \Phi_X}$ in $\bicat{A}$.
Similarly, there are invertible modifications
\[ \lambda_{\Phi} \colon \Id_{\bifun{G}} \circ \Phi \Iisoto \Phi \quad\text{and}\quad
\rho_{\Phi} \colon \Phi \circ \Id_{\bifun{F}} \Iisoto \Phi \]
playing respectively the role of left and right unitors.

The vertical composition of two modifications
$\mu \colon \Phi \Rrightarrow \Psi$ and $\nu \colon \Psi \Rrightarrow \Omega$
will also be denoted by $\nu \circ \mu$.

\subsection{Tricategorical aspects}

The composition of pseudofunctors defines a pseudofunctor from the product bicategory $\bicat{B}^{\bicat{A}} \times \bicat{C}^{\bicat{B}}$ to $\bicat{C}^{\bicat{A}}$.
The composite of two pseudofunctors $\bifun{F} \colon \bicat{A} \to \bicat{B}$ and
$\bifun{G} \colon \bicat{B} \to \bicat{C}$ will be denoted without composition symbol as $\bifun{G F}$.

\paragraph{Whiskering}
Fixing a pseudofunctor $\bifun{F}$ object of $\bicat{B}^{\bicat{A}}$,
one obtains a pseudofunctor $\bicat{C}^{\bicat{B}} \to \bicat{C}^{\bicat{A}}$
whose local functors encode the \emph{pre-whiskering} by $\bifun{F}$.
The \emph{pre-whiskering} of a pseudonatural transformation
\[ \begin{tikzcd}[column sep=large]
    \bicat{B}
        \rar[bend left, start anchor=north east, end anchor=north west, "\bifun{G}_1", ""{coordinate, name=G1}]
        \rar[bend right, start anchor=south east, end anchor=south west, "\bifun{G}_2" below, ""{coordinate, name=G2}]
        \ar[from=G1, to=G2, Rightarrow, shorten=0.2em, "\Psi"] &
    \bicat{C}
\end{tikzcd} \]
by $\bifun{F}$ will be denoted $\Psi_{\bifun{F}}$.
Its component at an object $X$ of $\bicat{A}$ is
\[ \Psi_{\bifun{F}(X)} \colon \bifun{G}_1 \bifun{F}(X) \longrightarrow \bifun{G}_2 \bifun{F}(X) \]
whereas its component at a morphism $f \colon X \to Y$ is
\[\Psi_{\bifun{F}(f)} \colon \bifun{G}_2 \bifun{F}(f) \circ \Psi_{\bifun{F}(X)} \Isoto \Psi_{\bifun{F}(Y)} \circ \bifun{G}_1 \bifun{F}(f),\]
hence the index notation.
By functoriality, any modification $\nu \colon \Psi_1 \Rightarrow \Psi_2$ has a pre-whiskering by $\bifun{F}$ too,
denoted by $\nu_{\bifun{F}}$ and defined on an object $X$ as $\nu_{\bifun{F}(X)}$.

Consider now an object $\bifun{G}$ of $\bicat{C}^{\bicat{B}}$.
The \emph{post-whiskering} of a pseudonatural transformation
\[ \begin{tikzcd}[column sep=large]
    \bicat{A}
        \rar[bend left, start anchor=north east, end anchor=north west, "\bifun{F}_1", ""{coordinate, name=F1}]
        \rar[bend right, start anchor=south east, end anchor=south west, "\bifun{F}_2" below, ""{coordinate, name=F2}]
        \ar[from=F1, to=F2, Rightarrow, shorten=0.2em, "\Phi"] &
    \bicat{B}
\end{tikzcd} \]
by $\bifun{G}$ will be denoted by $\bifun{G} \Phi$.
Its component at an object $X$ of $\bicat{A}$ is
\[ \bifun{G}(\Phi_X) \colon \bifun{G} \bifun{F}_1(X) \longrightarrow \bifun{G} \bifun{F}_2(X). \]
Its component at a morphism $f \colon X \to Y$ has for expression
\[ \bifun{G} \bifun{F}_2(f) \circ \bifun{G}(\Phi_{X}) \Isoto
\bifun{G} \left( \bifun{F}_2(f) \circ \Phi_{X} \right) \Isoto[\bifun{G} (\Phi_f)]
\bifun{G} \left( \Phi_{Y} \circ \bifun{F}_1(f) \right) \Isoto
\bifun{G} (\Phi_{Y}) \circ \bifun{G} \bifun{F}_1(f). \]
The post-whiskering of a modification $\mu \colon \Phi_1 \Rightarrow \Phi_2$ is given on $X$ by
\[ (\bifun{G} \mu)_X = \bifun{G} (\mu_X). \]

Consider finally two horizontally composable pseudonatural transformations
\[ \begin{tikzcd}[column sep=large]
    \bicat{A}
        \rar[bend left, start anchor=north east, end anchor=north west, "\bifun{F}_1", ""{coordinate, name=F1}]
        \rar[bend right, start anchor=south east, end anchor=south west, "\bifun{F}_2" below, ""{coordinate, name=F2}]
        \ar[from=F1, to=F2, Rightarrow, shorten=0.2em, "\Phi"] &
    \bicat{B}
        \rar[bend left, start anchor=north east, end anchor=north west, "\bifun{G}_1", ""{coordinate, name=G1}]
        \rar[bend right, start anchor=south east, end anchor=south west, "\bifun{G}_2" below, ""{coordinate, name=G2}]
        \ar[from=G1, to=G2, Rightarrow, shorten=0.2em, "\Psi"] &
    \bicat{C}
\end{tikzcd}. \]
The exchange property of post-whiskering and pre-whiskering is replaced by an invertible modification
\[ \Psi_{\Phi} \colon \bifun{G}_2(\Phi) \circ \Psi_{\bifun{F}_1} \Iisoto \Psi_{\bifun{F}_2} \circ \bifun{G}_1(\Phi) \]
defined at any object $X$ of $\bicat{A}$ by
\[\begin{tikzcd}
    \bifun{G}_1 \bifun{F}_1(X) \rar["\bifun{G}_1(\Phi_X)", ""{coordinate, name=G1}] \dar[swap]{\Psi_{\bifun{F}_1(X)}} &
    \bifun{G}_1 \bifun{F}_2(X) \dar{\Psi_{\bifun{F}_2(X)}} \\
    \bifun{G}_2 \bifun{F}_1(X) \rar["\bifun{G}_2 (\Phi_X)"{name=G2}] &
    \bifun{G}_2 \bifun{F}_2(X)
    \ar[from=G2, to=G1, Rightarrow, shorten >=0.2em, "\sim" sloped, "\Psi_{\Phi_X}" swap]
\end{tikzcd}. \]

\paragraph{Associators}
The composition of three pseudofunctors $\bifun{F}$, $\bifun{G}$, $\bifun{H}$ is associative only up to a pseudonatural transformation
\[ \Alpha_{\bifun{H}, \bifun{G}, \bifun{F}} \colon (\bifun{H G}) \bifun{F} \Iisoto \bifun{H} (\bifun{G F}) \]
which is an equivalence as 1-morphism in the suitable bicategory of pseudofunctors.
It has a canonical quasi-inverse simply denoted by $\Alpha_{\bifun{H}, \bifun{G}, \bifun{F}}^{-1}$.

The dependency in $\bifun{F}$, $\bifun{G}$ and $\bifun{H}$ of this associator is pseudonatural,
in the sense that it defines a pseudonatural transformation filling
\[\begin{tikzcd}
    \bicat{D}^{\bicat{C}} \times \bicat{C}^{\bicat{B}} \times \bicat{B}^{\bicat{A}} \rar \dar &
    \bicat{D}^{\bicat{C}} \times \bicat{C}^{\bicat{A}} \dar \\
    \bicat{D}^{\bicat{B}} \times \bicat{B}^{\bicat{A}} \rar
        \ar[ur, Rightarrow, "\Alpha"] &
    \bicat{D}^{\bicat{A}}
\end{tikzcd}.\]
As a consequence, if there is for example a pseudonatural transformation $\Phi \colon \bifun{F}_1 \Rightarrow \bifun{F}_2$,
it induces an invertible modification $\Alpha_{\bifun{H}, \bifun{G}, \Phi}$ filling
\[\begin{tikzcd}[arrows=Rightarrow]
    (\bifun{H G}) \bifun{F}_1 \rar{(\bifun{H G}) \Phi} \dar[swap]{\Alpha_{\bifun{H}, \bifun{G}, \bifun{F}_1}} &
    (\bifun{H G}) \bifun{F}_2 \dar{\Alpha_{\bifun{H}, \bifun{G}, \bifun{F}_2}} \\
    \bifun{H} (\bifun{G} \bifun{F}_1) \rar[swap]{\bifun{H} (\bifun{G} \Phi)} \ar[ur, nfold=3, "\sim" sloped, "\Alpha_{\bifun{H}, \bifun{G}, \Phi}" swap] &
    \bifun{H} (\bifun{G} \bifun{F}_2)
\end{tikzcd}.\]

If $\bifun{F}$, $\bifun{G}$, $\bifun{H}$, $\bifun{I}$ are four composable pseudofunctors,
the well-known pentagon identity is replaced by a \emph{pentagonator},
which is an invertible modification
\[\sbox{\tmpcontent}{$(\bifun{I H})(\bifun{G F})$}
\begin{tikzcd}[arrows=Rightarrow, column sep={\the\wd\tmpcontent,between origins}, row sep=large]
	&[-3em] {(\bifun{I} (\bifun{H G})) \bifun{F}} && {\bifun{I} ((\bifun{H G}) \bifun{F})} &[-3em] \\
	{((\bifun{I H}) \bifun{G}) \bifun{F}} & & & & {\bifun{I} (\bifun{H} (\bifun{G F}))} \\
	& & \usebox{\tmpcontent}
	\ar[from=2-1, to=1-2, "\Alpha_{\bifun{I}, \bifun{H}, \bifun{G}} * \bifun{F}"]
	\ar[from=1-2, to=1-4, "\Alpha_{\bifun{I}, \bifun{H G}, \bifun{F}}", ""{name=0, coordinate}]
	\ar[from=1-4, to=2-5, "\bifun{I} * \Alpha_{\bifun{I H}, \bifun{G}, \bifun{F}}"]
	\ar[from=2-1, to=3-3, "\Alpha_{\bifun{I H}, \bifun{G}, \bifun{F}}" swap]
	\ar[from=3-3, to=2-5, "\Alpha_{\bifun{I}, \bifun{H}, \bifun{G F}}" swap]
	\ar[shorten <=0.2em, nfold=3, from=0, to=3-3, "\sim" sloped]
\end{tikzcd}.\]

\paragraph{Unitors}
Let us restrict to two bicategories $\bicat{A}$ and $\bicat{B}$.
The \emph{left unitor} is a pseudonatural transformation $\Lambda$
from the pseudofunctor of post-composition by $\ID_{\bicat{B}}$
to the identity pseudofunctor of $\bicat{B}^{\bicat{A}}$.

Similarly, the \emph{right unitor} is a pseudonatural transformation $\Rho$
from the pseudofunctor of pre-composition by $\ID_{\bicat{A}}$
to the identity pseudofunctor of $\bicat{B}^{\bicat{A}}$.

These pseudonatural transformations both have quasi-inverse,
respectively denoted by $\Lambda^{-1}$ and $\Rho^{-1}$.

Let us finish this subsection by introducing some useful abuses of notation.
Thereafter, we will frequently encounter pre-whiskering of a pseudonatural transformation
$\Psi \colon \bifun{G} \Rightarrow \ID_{\bicat{B}}$ by some pseudofunctor $\bifun{F}$.
This results in a first pseudonatural transformation
$\bifun{G F} \Rightarrow \ID_{\bicat{B}} \bifun{F}$,
which can then be post-composed by $\Lambda_{\bifun{F}}^{-1}$ 
to get a pseudonatural transformation
$\bifun{G F} \Rightarrow \bifun{F}$.
In such a setting, $\Psi_{\bifun{F}}$ will always designate that last pseudonatural transformation instead of the usual one.

Similarly, if $\Phi$ is a pseudonatural transformation $\ID_{\bicat{A}} \Rightarrow \bifun{F}$
and $\bifun{G}$ a pseudofunctor starting from $\bicat{A}$,
$\bifun{G}(\Phi)$ will refer to the composite
\[ \bifun{G} \Isoto[\Rho_{\bifun{G}}^{-1}] \bifun{G} \ID_{\bicat{A}} \Rightarrow \bifun{G F}. \]

These abuses of notation enjoy the same exchange properties as the usual one.
For example, if we have
\[ \begin{tikzcd}[column sep=large]
    \bicat{A}
        \rar[bend left, start anchor=north east, end anchor=north west, "\bifun{F}_1", ""{coordinate, name=F1}]
        \rar[bend right, start anchor=south east, end anchor=south west, "\bifun{F}_2" below, ""{coordinate, name=F2}]
        \ar[from=F1, to=F2, Rightarrow, shorten=0.2em, "\Phi"] &
    \bicat{B}
        \rar[bend left, start anchor=north east, end anchor=north west, "\bifun{G}", ""{coordinate, name=G1}]
        \rar[bend right, start anchor=south east, end anchor=south west, "\ID_{\bicat{B}}" below, ""{coordinate, name=G2}]
        \ar[from=G1, to=G2, Rightarrow, shorten=0.2em, "\Psi"] &
    \bicat{B}
\end{tikzcd}, \]
there is still an invertible modification
\[ \Phi \circ \Psi_{\bifun{F}_1} \Iisoto \Psi_{\bifun{F}_2} \circ \bifun{G}(\Phi). \]

\paragraph{Biadjunctions and Pseudomonads}
These notations allow to define biadjunctions in a compact manner.
\begin{definition}
A \emph{biadjunction} between bicategories $\bicat{A}$ and $\bicat{B}$ consists in the data of:
\begin{itemize}
\item two pseudofunctors $\bifun{L} \colon \bicat{A} \to \bicat{B}$ and $\bifun{R} \colon \bicat{B} \to \bicat{A}$;
\item two pseudonatural transformations $\Eta \colon \ID_{\bicat{B}} \Rightarrow \bifun{R L}$
and $\Epsilon \colon \bifun{L R} \Rightarrow \ID_{\bicat{A}}$;
\item two invertible modifications
\begin{gather}
\begin{tikzcd}[arrows=Rightarrow, ampersand replacement=\&]
    \& \bifun{L} (\bifun{R L}) \rar["\sim" sloped, "\Alpha_{\bifun{L}, \bifun{R}, \bifun{L}}^{-1}"{swap, name=Assoc}] \&
    (\bifun{L R}) \bifun{L} \ar[dr, "\Epsilon_{\bifun{L}}"] \& \\
    \bifun{L} \ar[ur, "\bifun{L}(\Eta)"] \ar[rrr, "\Id_{\bifun{L}}"{name=IdR}] \& \& \& \bifun{L}
    \ar[from=Assoc, to=IdR, nfold=3, dashed, "\sim" sloped]
\end{tikzcd}
\intertext{and}
\begin{tikzcd}[arrows=Rightarrow, ampersand replacement=\&]
    \& (\bifun{R L}) \bifun{R} \rar["\sim" sloped, "\Alpha_{\bifun{R}, \bifun{L}, \bifun{R}}"{swap, name=Assoc}] \&
    \bifun{R} (\bifun{L R}) \ar[dr, "\bifun{R}(\Epsilon)"] \& \\
    \bifun{R} \ar[ur, "\Eta_{\bifun{R}}"] \ar[rrr, "\Id_{\bifun{R}}"{name=IdL}] \& \& \& \bifun{R}
    \ar[from=Assoc, to=IdL, nfold=3, dashed, "\sim" sloped]
\end{tikzcd}.
\end{gather}
\end{itemize}
\end{definition}
Such a biadjunction induces for any $A$ in $\bicat{A}$ and $B$ in $\bicat{B}$ functors
\begin{gather*}
    \biHom_{\bicat{B}}(\bifun{L}(A), B) \xrightarrow{\bifun{R}({-}) \circ \Eta_A} \biHom_{\bicat{A}}(A, \bifun{R}(B)),\\
    \biHom_{\bicat{A}}(A, \bifun{R}(B)) \xrightarrow{\Epsilon_B \circ \bifun{L}({-})} \biHom_{\bicat{B}}(\bifun{L}(A), B)
\end{gather*}
quasi-inverse one of each other.

This definition gives an incoherent notion of biadjunction
(called \emph{i-weak i-quasi-adjunction} in \cite{Gray_1974}).
A coherent version of biadjunction (\emph{strict i-weak i-quasi-adjunction} in the terminology of \cite{Gray_1974}) is obtained by requiring the triangulators to satisfy the so-called swallowtail equations.
They ensure that equivalences between hom categories are adjoint equivalences.
In fact, any incoherent biadjunction can be upgraded into a coherent one
by changing one of the two invertible modifications \cite{Gray_1974, Zoberlein_1976}.

A biadjunction creates a \emph{pseudomonad} \cite{Lack_2000}.
Recall that a pseudomonad on a bicategory $\bicat{A}$ consists in
a pseudofunctor $\bifun{T} \colon \bicat{A} \to \bicat{A}$,
two pseudonatural transformations $\Pi \colon \bifun{T}^2 \Rightarrow \bifun{T}$,
$\Psi \colon \ID_{\bicat{A}} \Rightarrow \bifun{T}$,
and three invertible modifications $\beta \colon \Pi \circ \Psi_{\bifun{T}} \Iisoto \Id_{\bifun{T}}$,
$\eta \colon \Pi \circ \bifun{T}(\Psi) \Iisoto \Id_{\bifun{T}}$,
$\pi \colon \Pi \circ \bifun{T}(\Pi) \circ \Alpha_{\bifun{T}, \bifun{T}, \bifun{T}} \Iisoto \Pi \circ \Pi_{\bifun{T}}$,
satisfying two coherence conditions.

\subsection{Inverters}

If $\phi$ is a 2-morphism between two parallel 1-morphisms $f_1, f_2 \colon X_1 \to X_2$ in a bicategory $\bicat{C}$,
an \emph{inverter cone} is an object $X_0$ of $\bicat{C}$ (called the \emph{vertex} of the cone)
together with a 1-morphism $f_0 \colon X_0 \to X_1$ such that $\phi * f_0$ becomes invertible.
Given an object $X_0$, inverter cones of vertex $X_0$ constitute
a full subcategory $\InvCone(X_0; \phi)$ of $\biHom_{\bicat{C}}(X_0, X_1)$,
which depends pseudofunctorially on $X_0$.
An \emph{inverter} of $\phi$ is an universal inverter cone,
that is an inverter cone $(\tilde{X}_0, \tilde{f}_0)$ such that, for any other object $X_0$,
the pre-composition functor by $\tilde{f}_0$ induces an equivalence
\[ \cat{Hom}_{\bicat{C}}(X_0, \tilde{X}_0) \isoto \InvCone(X_0; \phi). \]
It immediately implies that $\tilde{f}_0$ is a \emph{fully faithful} 1-morphism of $\bicat{C}$,
i.e. for any object $Y$ of $\bicat{C}$, the functor of precomposition by $\tilde{f}_0$
\[ \biHom_{\bicat{C}}(X_1, Y) \xrightarrow{{-} \circ \tilde{f}_0} \biHom_{\bicat{C}}(\tilde{X}_0, Y) \]
is fully faithful in the usual meaning.

Inverters are a kind of weighted bilimits \cite{Street_1976, Street_1980}.
The following proposition expresses the pseudofunctoriality of inverters with respect to $\phi$.
\begin{proposition}\label{prop:inverter_morphism}
Let $\bicat{C}$ be any bicategory in which there is a diagram
\begin{equation}
\begin{tikzcd}[row sep=large]
    X_0 \rar{f_0}
    &
    X_1
        \rar[bend left, start anchor=north east, end anchor=north west, "f_1"{description, near start, name=f1}]
        \dar["h_1"{swap, name=h1}] &[+1em]
    X_2 \dar{h_2} \\
    Y_0 \rar["g_0" name=g0]
    &
    Y_1
        \rar[bend left, start anchor=north east, end anchor=north west, "g_1"{description, near start, name=g1}]
        \ar[from=f1, to=g1, Rightarrow, shorten=0.2em, "\sim"{near start, sloped}, "\eta_1"{near end, swap}]
        \ar[from=u, to=ur, crossing over, bend right, start anchor=south east, end anchor=south west, "f_2"{description, near end, name=f2}]
        \ar[from=f1, to=f2, Rightarrow, shorten=0.2em, "\phi"]
        \rar[bend right, start anchor=south east, end anchor=south west, "g_2"{description, near end, name=g2}]
        \ar[from=g1, to=g2, Rightarrow, shorten=0.2em, "\psi" swap] &
    Y_2
        \ar[from=f2, to=g2, crossing over, Rightarrow, shorten=0.2em, "\sim"{swap, sloped}, "\eta_2"{near start}]
\end{tikzcd}
\end{equation}
with
$\eta_2 \circ (h_2 * \phi) = (\psi * h_1) \circ \eta_1$.
\begin{enumerate}
\item If $\phi * f_0$ is invertible, then $\psi * (h_1 \circ f_0)$ is also.
\item If moreover $(Y_0, g_0)$ is an inverter of $\psi$,
then there exists a 1-morphism $h_0 \colon X_0 \to Y_0$ and
a 2-isomorphism $h_1 \circ f_0 \Isoto g_0 \circ h_0$,
which are unique up to a compatible 2-isomorphism.
\end{enumerate}
\end{proposition}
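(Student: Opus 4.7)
The plan is to deduce part (1) by pre-whiskering the compatibility equation by $f_0$ on the right, and then to feed the resulting inverter cone into the universal property of $(Y_0, g_0)$ to obtain part (2).

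For (1), I would right-whisker the hypothesis $\eta_2 \circ (h_2 * \phi) = (\psi * h_1) \circ \eta_1$ by $f_0$, obtaining the equality
\[ (\eta_2 * f_0) \circ ((h_2 * \phi) * f_0) = ((\psi * h_1) * f_0) \circ (\eta_1 * f_0). \]
The whiskerings $\eta_1 * f_0$ and $\eta_2 * f_0$ remain 2-isomorphisms since $\eta_1$ and $\eta_2$ are, and, up to the associator, $(h_2 * \phi) * f_0$ coincides with $h_2 * (\phi * f_0)$, which is invertible by hypothesis. The left-hand side is therefore invertible, which forces $(\psi * h_1) * f_0$ to be so; this is precisely $\psi * (h_1 \circ f_0)$ modulo another associator.

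For (2), the conclusion of (1) says exactly that $(X_0, h_1 \circ f_0)$ is an inverter cone of vertex $X_0$ for $\psi$. Since $(Y_0, g_0)$ is the inverter of $\psi$, the precomposition functor $\biHom_{\bicat{C}}(X_0, Y_0) \isoto \InvCone(X_0; \psi)$ is an equivalence of categories. Its essential surjectivity yields a 1-morphism $h_0 \colon X_0 \to Y_0$ together with a 2-isomorphism $h_1 \circ f_0 \Isoto g_0 \circ h_0$ in $\InvCone(X_0; \psi)$, which is equally a 2-isomorphism in $\biHom_{\bicat{C}}(X_0, Y_1)$ by fullness, and its fully faithfulness provides the uniqueness up to a unique compatible 2-isomorphism. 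I do not expect any real obstacle here: the argument is entirely formal, with the only bookkeeping being the insertion of associators identifying $(h_2 * \phi) * f_0$ with $h_2 * (\phi * f_0)$ and $(\psi * h_1) * f_0$ with $\psi * (h_1 \circ f_0)$, both handled by Mac Lane's coherence theorem.
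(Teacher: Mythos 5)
Your proposal is correct and matches the paper's argument in substance: part (1) is obtained from the compatibility equation together with associators and the functoriality of whiskering (the paper rewrites $\psi * (h_1 \circ f_0)$ directly as a composite involving $h_2 * (\phi * f_0)$, while you whisker the equation by $f_0$ and solve for $(\psi * h_1) * f_0$, which is the same manipulation), and part (2) is, as in the paper, an immediate application of the universal property of the inverter $(Y_0, g_0)$ to the cone $(X_0, h_1 \circ f_0)$. No gap.
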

\begin{proof}
The first assertion follows from the rewriting of $\psi * (h_1 \circ f_0)$ as
\begin{align*}
    \psi * (h_1 \circ f_0) &= \alpha_{g_2, h_1, f_0} \circ \left((\psi * h_1) * f_0 \right) \circ \alpha_{g_1, h_1, f_0}^{-1} \\
    &= \alpha_{g_2, h_1, f_0} \circ \left(\left(\eta_2 \circ (h_2 * \phi) \circ \eta_1^{-1}\right) * f_0 \right)
        \circ \alpha_{g_1, h_1, f_0}^{-1} \\
    &= \begin{multlined}[t]
    \alpha_{g_2, h_1, f_0} \circ (\eta_2 * f_0) \circ \alpha_{h_2, f_2, f_0}^{-1} \circ \left(h_2 * (\phi * f_0)\right) \\
    \circ \alpha_{h_2, f_1, f_0} \circ (\eta_1^{-1} * f_0) \circ \alpha_{g_1, h_1, f_0}^{-1}.
    \end{multlined}
\end{align*}
If $\phi * f_0$ is invertible, $(X_0, h_1 \circ f_0)$ is therefore an inverter cone of $\phi$.
The second assertion is now a direct consequence of the universal property of inverters.
\end{proof}

In the rest of this section, we are going to describe how inverters of modifications can be obtained from pointwise inverters.
Beforehand, we need the following lemma.

\begin{lemma}
Let $\Phi$ be a pseudonatural transformation between two pseudofunctors $\bifun{F}_1, \bifun{F}_2 \colon \bicat{A} \to \bicat{B}$.
If for any object $X$ of $\bicat{A}$, $\Phi_X$ is a fully faithful 1-morphism of $\bicat{B}$,
then $\Phi$ is fully faithful within $\bicat{B}^{\bicat{A}}$.
\end{lemma}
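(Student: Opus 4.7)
The plan is to unfold the definition of full faithfulness in $\bicat{B}^{\bicat{A}}$ given just before the lemma: I need to show that, for every pseudofunctor $\bifun{H} \colon \bicat{A} \to \bicat{B}$, the precomposition functor
\[ {-} \circ \Phi \colon \biHom_{\bicat{B}^{\bicat{A}}}(\bifun{F}_2, \bifun{H}) \longrightarrow \biHom_{\bicat{B}^{\bicat{A}}}(\bifun{F}_1, \bifun{H}) \]
is fully faithful as an ordinary functor of categories. The strategy is to reduce faithfulness and fullness to the corresponding properties of the pointwise precomposition functors ${-} \circ \Phi_X \colon \biHom_{\bicat{B}}(\bifun{F}_2(X), \bifun{H}(X)) \to \biHom_{\bicat{B}}(\bifun{F}_1(X), \bifun{H}(X))$, which hold at every object $X$ of $\bicat{A}$ by hypothesis on $\Phi_X$.

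Faithfulness is immediate: given two modifications $\mu, \nu \colon \Psi_1 \Rrightarrow \Psi_2$ whose whiskerings by $\Phi$ coincide, evaluating at an arbitrary object $X$ yields $\mu_X * \id_{\Phi_X} = \nu_X * \id_{\Phi_X}$, whence $\mu_X = \nu_X$ by pointwise faithfulness, and therefore $\mu = \nu$.

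For fullness, I would start with a modification $\sigma \colon \Psi_1 \circ \Phi \Rrightarrow \Psi_2 \circ \Phi$ and use pointwise fullness of each ${-} \circ \Phi_X$ to obtain, for every $X$, a unique 2-morphism $\mu_X \colon \Psi_{1, X} \Rightarrow \Psi_{2, X}$ with $\mu_X * \id_{\Phi_X} = \sigma_X$. The substantial step is then to verify that this family satisfies the modification axiom at each 1-morphism $f \colon X \to Y$ of $\bicat{A}$. This axiom is an equation between two 2-morphisms $\bifun{H}(f) \circ \Psi_{1, X} \Rightarrow \Psi_{2, Y} \circ \bifun{F}_2(f)$; I would whisker the putative equation by $\Phi_X$ on the right and invoke the pointwise faithfulness of ${-} \circ \Phi_X$ to reduce to the whiskered equation. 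The pseudonaturality constraint $\Phi_f \colon \bifun{F}_2(f) \circ \Phi_X \Isoto \Phi_Y \circ \bifun{F}_1(f)$ then identifies this whiskered equation with the modification axiom for $\sigma$ at $f$, after substituting the defining identities $\mu_X * \id_{\Phi_X} = \sigma_X$ and $\mu_Y * \id_{\Phi_Y} = \sigma_Y$ and unpacking the definitions of $(\Psi_1 \circ \Phi)_f$ and $(\Psi_2 \circ \Phi)_f$.

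The main obstacle is purely the bookkeeping of associators and whiskerings required to match the naturality squares of $\Psi_1 \circ \Phi$ and $\Psi_2 \circ \Phi$ — each built as a three-step composite from $\Psi_{i, f}$, $\Phi_f$ and associators — with the whiskered axiom for $\mu$. No additional hypothesis on $\Phi$ is needed beyond the pointwise one: full faithfulness of each $\Phi_X$ is precisely what makes the componentwise construction of $\mu$ possible and its modification axiom detectable by whiskering.
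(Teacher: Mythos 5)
Your argument has the right skeleton (componentwise lifting, detection of the modification axiom by whiskering plus faithfulness, then uniqueness), but it runs in the wrong variance, and that is a real gap rather than a stylistic difference. You read ``$\Phi$ is fully faithful within $\bicat{B}^{\bicat{A}}$'' as: pre-composition ${-}\circ\Phi \colon \biHom_{\bicat{B}^{\bicat{A}}}(\bifun{F}_2,\bifun{H})\to\biHom_{\bicat{B}^{\bicat{A}}}(\bifun{F}_1,\bifun{H})$ is fully faithful, and accordingly you invoke full faithfulness of the pointwise pre-composition functors ${-}\circ\Phi_X$ ``by hypothesis''. But the notion the paper actually operates with is the covariant one: $f\colon X_0\to X_1$ is fully faithful when, for every $Y$, post-composition $f\circ{-}\colon\biHom_{\bicat{C}}(Y,X_0)\to\biHom_{\bicat{C}}(Y,X_1)$ is fully faithful. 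That is the property an inverter leg enjoys (restriction along an inverter leg is not fully faithful in general --- think of a full subcategory inclusion in $\cat{Cat}$), it is how the lemma is applied immediately afterwards (uniqueness of a modification $\tilde{\nu}$ with $\Phi_0*\tilde{\nu}$ prescribed, i.e.\ lifting through $\Phi_0\circ{-}$), and the paper reserves the name \emph{co-fully faithful} for your pre-composition notion in Section~\ref{sec:dual}. Under the intended reading your first reduction is unavailable: full faithfulness of each $\Phi_X$ says nothing about ${-}\circ\Phi_X$, and what you end up proving is the dual statement (a co-full-faithfulness assertion about transformations \emph{out of} $\bifun{F}_1,\bifun{F}_2$), which is not the one needed downstream.

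The repair is mechanical, and after it your proof coincides with the paper's: show that for every pseudofunctor $\bifun{G}$ the post-composition functor $\Phi\circ{-}\colon\biHom_{\bicat{B}^{\bicat{A}}}(\bifun{G},\bifun{F}_1)\to\biHom_{\bicat{B}^{\bicat{A}}}(\bifun{G},\bifun{F}_2)$ is fully faithful. Given a modification $\mu\colon\Phi\circ\Psi_1\Rrightarrow\Phi\circ\Psi_2$, lift each component through $\Phi_X\circ{-}$ to the unique $\tilde{\mu}_X$ with $\Phi_X*\tilde{\mu}_X=\mu_X$, and verify the modification axiom at $f\colon X\to Y$ by whiskering with $\Phi_Y$ on the \emph{left} (not with $\Phi_X$ on the right) and using faithfulness of $\Phi_Y\circ{-}$; the cancellation of the invertible cells $\Phi_f$ and of the associators then proceeds exactly as in your bookkeeping paragraph, and uniqueness is again pointwise.
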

\begin{proof}
One has to show that, for any pseudofunctor $\bifun{G}$, the post-composition functor by $\Phi$
\[ \biHom_{\bicat{B}^{\bicat{A}}}(\bifun{G}, \bifun{F}_1) \to \biHom_{\bicat{B}^{\bicat{A}}}(\bifun{G}, \bifun{F}_2) \]
is fully faithful.

Let $\Psi_1$ and $\Psi_2$ be two pseudonatural transformations $\bifun{G} \Rightarrow \bifun{F}_1$.
If $\mu$ is a modification $\Phi \circ \Psi_1 \Rrightarrow \Phi \circ \Psi_2$,
then for any object $X$ of $\bicat{A}$,
there is an unique 2-morphism $\tilde{\mu}_X \colon (\Psi_1)_X \Rightarrow (\Psi_2)_X$
such that $\Phi_X * \tilde{\mu}_X = \mu_X$.
Since for any 1-morphism $f \colon X \to Y$, a straightforward computation shows that
\[
    \Phi_Y * \left( (\tilde{\mu}_Y * \bifun{F}_1(f)) \circ (\Psi_1)_f \right)
    = \Phi_Y * \left( (\Psi_2)_f \circ (\bifun{F}_2(f) * \tilde{\mu}_X) \right),
\]
$\tilde{\mu}$ constitutes actually a modification.

Moreover, if $\nu$ is an other modification $\Psi_1 \Rrightarrow \Psi_2$ such that $\Phi * \nu = \mu$,
then we have $\nu_X = \tilde{\mu}_X$ for any object $X$ of $\bicat{A}$, hence $\nu = \tilde{\mu}$.
\end{proof}

\begin{proposition}
Let $\bicat{A}$ and $\bicat{B}$ be two bicategories
and a cell in $\bicat{B}^{\bicat{A}}$
\[\begin{tikzcd}[arrows=Rightarrow, column sep=large]
    \bifun{F}_1
        \rar[bend left, start anchor=north east, end anchor=north west, "\Phi_1", ""{coordinate, name=Phi1}]
        \rar[bend right, start anchor=south east, end anchor=south west, "\Phi_2" below, ""{coordinate, name=Phi2}]
        \ar[from=Phi1, to=Phi2, nfold=3, shorten=0.2em, "\mu"] &
    \bifun{F}_2
\end{tikzcd}.\]
If $\Phi_0$ is a pseudonatural transformation from a pseudofunctor $\bifun{F}_0$ to $\bifun{F}_1$
which is a \emph{pointwise inverter} of $\mu$,
i.e. for any object $X$ of $\bicat{A}$, $(\bifun{F}_0(X), (\Phi_0)_X)$ is an inverter of $\mu_X$ in $\bicat{B}$,
then $(\bifun{F}_0, \Phi_0)$ is an inverter of $\mu$ in $\bicat{B}^{\bicat{A}}$.
\end{proposition}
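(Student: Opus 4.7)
The plan is to show that post-composition by $\Phi_0$ yields an equivalence
\[ \biHom_{\bicat{B}^{\bicat{A}}}(\bifun{G}, \bifun{F}_0) \isoto \InvCone(\bifun{G}; \mu) \]
for every pseudofunctor $\bifun{G}$, by exhibiting it as fully faithful and essentially surjective. Full faithfulness is immediate from the preceding lemma: each $(\Phi_0)_X$ is fully faithful in $\bicat{B}$ because it is the 1-morphism of a universal inverter cone, hence $\Phi_0$ is fully faithful in $\bicat{B}^{\bicat{A}}$, hence the post-composition functor is fully faithful into $\biHom_{\bicat{B}^{\bicat{A}}}(\bifun{G}, \bifun{F}_1)$ and a fortiori into its full subcategory $\InvCone(\bifun{G}; \mu)$.

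For essential surjectivity, I would start from an inverter cone $(\bifun{G}, \Psi)$, i.e. a pseudonatural transformation $\Psi \colon \bifun{G} \Rightarrow \bifun{F}_1$ such that $\mu * \Psi$ is invertible. Since invertibility of a modification is checked componentwise, each $(\bifun{G}(X), \Psi_X)$ is an inverter cone of $\mu_X$; by the universal property of $(\bifun{F}_0(X), (\Phi_0)_X)$, this produces a 1-morphism $\tilde{\Psi}_X \colon \bifun{G}(X) \to \bifun{F}_0(X)$ together with an invertible 2-morphism $\tau_X \colon (\Phi_0)_X \circ \tilde{\Psi}_X \Isoto \Psi_X$, both canonical up to unique compatible 2-iso.

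The main technical step is to promote the family $(\tilde{\Psi}_X)_X$ into a pseudonatural transformation. For each $f \colon X \to Y$, the composite
\[ (\Phi_0)_Y \circ (\bifun{F}_0(f) \circ \tilde{\Psi}_X) \overset{(\Phi_0)_f}{\cong} \bifun{F}_1(f) \circ ((\Phi_0)_X \circ \tilde{\Psi}_X) \overset{\tau_X}{\cong} \bifun{F}_1(f) \circ \Psi_X \overset{\Psi_f}{\cong} \Psi_Y \circ \bifun{G}(f) \overset{\tau_Y^{-1}}{\cong} (\Phi_0)_Y \circ (\tilde{\Psi}_Y \circ \bifun{G}(f)) \]
is invertible, so by full faithfulness of $(\Phi_0)_Y$ it comes from a unique 2-iso $\tilde{\Psi}_f \colon \bifun{F}_0(f) \circ \tilde{\Psi}_X \Isoto \tilde{\Psi}_Y \circ \bifun{G}(f)$. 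The pseudonaturality axioms of $\tilde{\Psi}$ (unit, composition, 2-morphism) reduce, after whiskering by $(\Phi_0)_{-}$ and using the defining equation above, to the corresponding axioms for $\Psi$ and $\Phi_0$ together with the modification property of $\tau$ (which itself follows by construction from full faithfulness). The unique 2-morphism trick then forces the axioms to hold.

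Finally, the family $(\tau_X)_X$ constitutes an invertible modification $\Phi_0 \circ \tilde{\Psi} \Iisoto \Psi$ precisely because of the way $\tilde{\Psi}_f$ was defined; this witnesses essential surjectivity. The main obstacle is organising Step~3 so that all compatibilities fall out of the full-faithful reflection trick rather than being proved by hand; this is the same device already used in the preceding lemma, and it carries over verbatim here.
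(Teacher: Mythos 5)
Your proposal is correct and takes essentially the same route as the paper: the paper likewise lifts the cone pointwise via the universal property of each $(\bifun{F}_0(X), (\Phi_0)_X)$, defines the 2-cells $\widetilde{\Psi}_f$ through the full faithfulness of $(\Phi_0)_Y$, and relies on the preceding lemma that pointwise fully faithful implies fully faithful in $\bicat{B}^{\bicat{A}}$. The only difference is packaging --- the paper assembles an explicit quasi-inverse functor on $\InvCone(\bifun{G}; \mu)$ together with its unit and counit, whereas you invoke the fully-faithful-plus-essentially-surjective criterion --- which amounts to the same argument.
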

\begin{proof}
Let $\bifun{G}$ be any object of $\bicat{B}^{\bicat{A}}$.
One has to build a functor
\[ \InvCone(\bifun{G}; \mu) \to \biHom_{\bicat{B}^{\bicat{A}}}(\bifun{G}, \bifun{F}_0) \]
quasi-inverse to the functor of pre-composition by $\Phi_0$.

Let $(\bifun{G}, \Psi)$ be an inverter cone of $\mu$.
The universal property of $(\bifun{F}_0(X), (\Phi_0)_X)$ gives, for any object $X$ of $\bicat{A}$,
a dashed 1-morphism and a dashed 2-isomorphism filling
\[\begin{tikzcd}
    \bifun{G}(X) \ar[dr, "\Psi_X", ""{coordinate, name=Psi}] \dar[dashed, "\widetilde{\Psi}_X"] & \\
    \bifun{F}_0(X) \rar[""{coordinate, name=Phi}, "(\Phi_0)_X" swap] & \bifun{F}_1(X)
    \ar[from=Psi, to=Phi, Rightarrow, bend right, dashed, shorten=0.25em, "\sim" sloped, "\eta_X"{near start}]
\end{tikzcd}.\]
For any 1-morphism $f \colon X \to Y$, the full faithfulness of $(\Phi_0)_Y$
ensures the existence of an unique 2-isomorphism
\[ \widetilde{\Psi}_f \colon \bifun{F}_0(f) \circ \widetilde{\Psi}_X \Isoto \widetilde{\Psi}_Y \circ \bifun{G}(f) \]
such that $(\Phi_0)_Y * \widetilde{\Psi}_f$ equals the composite of
\[\begin{tikzcd}
    & \bifun{F}_0(X) \ar[r, "\bifun{F}_0(f)", ""{coordinate, name=F0f}] \dar["(\Phi_0)_{\mathrlap{X}}" description] &
    \bifun{F}_0(Y) \dar["(\Phi_0)_Y"] \\
    \bifun{G}(X) \ar[ur, "\widetilde{\Psi}_X", ""{coordinate, near end, name=SigmaX}] \rar[swap]{\Psi_X} \ar[dr, "\bifun{G}(f)" swap] &
    \bifun{F}_1(X) \ar[r, "\bifun{F}_1(f)"{name=F1f}]
        \ar[from=SigmaX, Rightarrow, shorten <=0.2em, "\sim" sloped, "\eta_X^{-1}"{swap, near start}]
        \ar[from=F0f, to=F1f, Rightarrow, shorten <=0.2em, "\sim" sloped, "\Phi_f^{-1}"{swap}] &
    \bifun{F}_1(Y) \\
    & \bifun{G}(Y) \ar[r, "\widetilde{\Psi}_Y" swap] \ar[ur, "\Psi_Y", ""{coordinate, near end, name=PsiY}]
        \ar[from=u, Rightarrow, "\sim" sloped, "\Psi_f" swap] &
    \bifun{F}_0(Y) \ar[u, "(\Phi_0)_Y" swap]
        \ar[from=PsiY, Rightarrow, shorten <=0.2em, "\sim" sloped, "\eta_Y" swap]
\end{tikzcd}. \]
One can show that these data constitute
a pseudonatural transformation $\widetilde{\Psi} \colon \bifun{G} \Rightarrow \bifun{F}_0$ and
an invertible modification $\eta_{\Psi} \colon \Psi \Iisoto \Phi_0 \circ \widetilde{\Psi}$.

Consider now a modification $\nu$ between two inverter cones $(\bifun{G}, \Psi_1)$ and $(\bifun{G}, \Psi_2)$.
By full faithfulness of $\Phi_0$ within $\bicat{B}^{\bicat{A}}$, there is
an unique modification $\tilde{\nu} \colon \widetilde{\Psi}_1 \Rrightarrow \widetilde{\Psi}_2$
such that $\Phi_0 * \tilde{\nu} = \eta_{\Psi_2}^{-1} \circ \nu \circ \eta_{\Psi_1}$.

It is easy to see that these two maps $\Psi \mapsto \widetilde{\Psi}$
and $\nu \mapsto \widetilde{\nu}$ build up a functor
from $\InvCone(\bifun{G}; \mu)$ to $\biHom_{\bicat{B}^{\bicat{A}}}(\bifun{G}, \bifun{F}_0)$.
In addition, $\eta$ defines by construction a natural isomorphism.
It only remains to build invertible modifications
$\epsilon_{\Omega} \colon \widetilde{\Phi_0 * \Omega} \Rightarrow \Omega$
natural in $\Omega \colon \bifun{G} \Rightarrow \bifun{F}_0$,
but it is again a consequence of the full faithfulness of $\Phi_0$.
\end{proof}

\begin{corollary}
Let $\bicat{A}$ and $\bicat{B}$ be two bicategories
and a cell in $\bicat{B}^{\bicat{A}}$
\[\begin{tikzcd}[arrows=Rightarrow, column sep=large]
    \bifun{F}_1
        \rar[bend left, start anchor=north east, end anchor=north west, "\Phi_1", ""{coordinate, name=Phi1}]
        \rar[bend right, start anchor=south east, end anchor=south west, "\Phi_2" below, ""{coordinate, name=Phi2}]
        \ar[from=Phi1, to=Phi2, nfold=3, shorten=0.2em, "\mu"] &
    \bifun{F}_2
\end{tikzcd}.\]
If for any object $X$ of $\bicat{A}$, $\mu_X$ has an inverter $(F_0(X), \Phi_0(X))$ in $\bicat{B}$,
then there exists a pseudofunctor $\bifun{F}_0$ and
a pseudonatural transformation $\Phi_0 \colon \bifun{F}_0 \Rightarrow \bifun{F}_1$
coinciding at any $X$ with $F_0(X)$ and $\Phi_0(X)$ such that
$(\bifun{F}_0, \Phi_0)$ is an inverter of $\mu$ in $\bicat{B}^{\bicat{A}}$.
\end{corollary}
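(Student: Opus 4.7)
The plan is to promote the pointwise data to a pseudofunctor~$\bifun{F}_0$ and a pseudonatural transformation~$\Phi_0$ and then invoke the previous proposition, all guided by the pointwise (hence global) full faithfulness of~$\Phi_0$.

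First I would fix $\bifun{F}_0$ on objects by $\bifun{F}_0(X) = F_0(X)$. To define $\bifun{F}_0(f)$ for a 1-morphism $f \colon X \to Y$, I apply Proposition~\ref{prop:inverter_morphism} to the square with horizontal sides $\Phi_0(X)$ and $\Phi_0(Y)$, vertical sides $\bifun{F}_1(f)$ and $\bifun{F}_2(f)$, and naturality cells $(\Phi_1)_f$ and $(\Phi_2)_f$. The compatibility $\eta_2 \circ (h_2 * \phi) = (\psi * h_1) \circ \eta_1$ required there is exactly the modification axiom for $\mu$ applied to $f$. The proposition produces $\bifun{F}_0(f) \colon \bifun{F}_0(X) \to \bifun{F}_0(Y)$ together with an invertible 2-cell $(\Phi_0)_f \colon \bifun{F}_1(f) \circ \Phi_0(X) \Isoto \Phi_0(Y) \circ \bifun{F}_0(f)$, both unique up to a compatible 2-isomorphism.

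Next I use that each $\Phi_0(Y)$ is fully faithful (as an inverter) to define all remaining structure by universal property. For a 2-morphism $\theta \colon f \Rightarrow g$ of $\bicat{A}$, let $\bifun{F}_0(\theta)$ be the unique 2-cell such that $\Phi_0(Y) * \bifun{F}_0(\theta) = (\Phi_0)_g^{-1} \circ (\bifun{F}_1(\theta) * \Phi_0(X)) \circ (\Phi_0)_f$. Similarly, the compositor $\bifun{F}_0^2$ and the unit constraint $\bifun{F}_0^0$ are defined as the unique 2-cells whose post-whiskering by $\Phi_0$ agrees with the corresponding composites built from $\bifun{F}_1^2$, $\bifun{F}_1^0$ and the $(\Phi_0)_{-}$. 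The pseudofunctoriality axioms of $\bifun{F}_0$, the pseudonaturality axioms of $\Phi_0$, and the modification axiom of $\Phi_0 \mu$ all become, after whiskering with a suitable $\Phi_0(Z)$, statements that hold by construction inside $\bicat{B}$ (they translate to the known axioms for $\bifun{F}_1$ and $\Phi_1$); faithfulness of $\Phi_0(Z) * (-)$ then lifts them to the required equalities in $\bicat{B}$.

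Once $(\bifun{F}_0, \Phi_0)$ has been assembled, it is by construction a pointwise inverter of $\mu$, and the previous proposition concludes. The substantive step is really just the 1-morphism level application of Proposition~\ref{prop:inverter_morphism}; the main obstacle is the tedious but entirely mechanical verification that the cells defined by universal property indeed satisfy all coherence axioms, which is where the uniqueness half of pointwise full faithfulness does the work.
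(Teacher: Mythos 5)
Your proposal is correct and follows essentially the same route as the paper: obtain $\bifun{F}_0$ on 1-morphisms and the cells $\Phi_0(f)$ from Proposition~\ref{prop:inverter_morphism} (the compatibility hypothesis being the modification axiom for $\mu$), define $\bifun{F}_0(\theta)$, $\bifun{F}_0^2$ and $\bifun{F}_0^0$ uniquely via the full faithfulness of the pointwise inverters, check the coherence axioms after whiskering by $\Phi_0$, and conclude with the preceding proposition on pointwise inverters. The only blemish is a direction slip in your defining equation for $\bifun{F}_0(\theta)$, which should read $\Phi_0(Y) * \bifun{F}_0(\theta) = \Phi_0(g) \circ \bigl(\bifun{F}_1(\theta) * \Phi_0(X)\bigr) \circ \Phi_0(f)^{-1}$, with the inverse on the other naturality cell; this does not affect the argument.
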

\begin{proof}
Thanks to the previous proposition,
it suffices to build the pseudofunctor $\bifun{F}_0$ and the pseudonatural transformation $\Phi_0$.
If $f$ is a 1-morphism $X \to Y$ in $\bicat{A}$, Proposition~\ref{prop:inverter_morphism} yields
a 1-morphism $\bifun{F}_0(f) \colon \bifun{F}_0(X) \to \bifun{F}_0(Y)$
and a 2-isomorphism $\Phi_0(f) \colon \bifun{F}_1(f) \circ \Phi_0(X) \Isoto \Phi_0(Y) \circ \bifun{F}_0(f)$.

If $g$ is an other 1-morphism $Y \to Z$, one needs a 2-isomorphism
\[ \bifun{F}_0^2(g, f) \colon \bifun{F}_0(g) \circ \bifun{F}_0(f) \Isoto \bifun{F}_0(g \circ f). \]
By full faithfulness of $\Phi_0(Z)$, it is equivalent to build a 2-isomorphism
\[ \Phi_0(Z) \circ \left(\bifun{F}_0(g) \circ \bifun{F}_0(f)\right) \Isoto \Phi_0(Z) \circ \bifun{F}_0(g \circ f). \]
To do so, just take
\[\begin{tikzcd}[arrows={Rightarrow, "\sim" sloped}, column sep=-2.4em]
    \Phi_0(Z) \circ \left(\bifun{F}_0(g) \circ \bifun{F}_0(f)\right) \ar[dr] & &
    \bifun{F}_1(g \circ f) \circ \Phi_0(X) \ar[dr, "\Phi_0(g \circ f)"{near end}] & \\
    & \left(\bifun{F}_1(g) \circ \bifun{F}_1(f)\right) \circ \Phi_0(X) \ar[ur, "{\bifun{F}_1^2(g, f) * \Phi_0(X)}" swap] & &
    \Phi_0(Z) \circ \bifun{F}_0(g \circ f) 
\end{tikzcd}\]
where the leftmost 2-isomorphism is
\[\begin{tikzcd}[arrows={Rightarrow, "\sim" sloped}, row sep=large, column sep=-4.8em]
    \Phi_0(Z) \circ \left(\bifun{F}_0(g) \circ \bifun{F}_0(f)\right) \ar[dr] & &
    \left( \bifun{F}_1(g) \circ \Phi_0(Y) \right) \circ \bifun{F}_0(f) \ar[dr] & &
    \left(\bifun{F}_1(g) \circ \bifun{F}_1(f)\right) \circ \Phi_0(X) \\
    & \left(\Phi_0(Z) \circ \bifun{F}_0(g) \right) \circ \bifun{F}_0(f) \ar[ur, "\Phi_0(g)^{-1} * \bifun{F}_0(f)" swap] & &
    \bifun{F}_1(g) \circ \left(\Phi_0(Y) \circ \bifun{F}_0(f) \right) \ar[ur, "\bifun{F}_1(g) * \Phi_0(f)^{-1}" swap] &
\end{tikzcd}.\]

When $f = \Id_X$, $\Phi_0(\Id_X)$ can be pre-composed with
the 2-isomorphism $\bifun{F}_1^0(X) \colon \Id_{\bifun{F}_1(X)} \Isoto \bifun{F}_1(\Id_X)$ and unitors in
\[\begin{tikzcd}[arrows={Rightarrow, "\sim" sloped}, column sep=-0.6em]
    \Phi_0(X) \circ \Id_{\bifun{F}_0(X)} \ar[dr] &&
    \Id_{\bifun{F}_1(X)} \circ \Phi_0(X) \ar[dr, "\bifun{F}_1^0(X) * \Phi_0(X)" swap] &&
    \Phi_0(X) \circ \bifun{F}_0(\Id_X) \\
    & \Phi_0(X) \ar[ur] &&
    \bifun{F}_1(\Id_X) \circ \Phi_0(X) \ar[ur, "\Phi_0(\Id_X)" swap] &
\end{tikzcd}.\]
The full faithfulness of $\Phi_0(X)$ ensures again the existence of an unique
\[ \bifun{F}_0^0(X) \colon \Id_{\bifun{F}_0(X)} \Isoto \bifun{F}_0(\Id_X) \]
whose post-whiskering $\Phi_0(X) * \bifun{F}_0^0(X)$ is equal to the above 2-isomorphism.

If $\phi$ is a 2-morphism between two parallel morphisms $f_1, f_2 \colon X \to Y$,
it induces a 2-morphism
\[ \bifun{F}_1(\phi) * \Phi_0(X) \colon \bifun{F}_1(f_1) \circ \Phi_0(X) \Rightarrow \bifun{F}_1(f_2) \circ \Phi_0(X). \]
The universal property of $(F_0(Y), \Phi_0(Y))$ provides a 2-morphism
$\bifun{F}_0(\phi) \colon \bifun{F}_0(f_1) \Rightarrow \bifun{F}_0(f_2)$ such that
\[ \left(\Phi_0(Y) * \bifun{F}_0(\phi)\right) \circ \Phi_0(f_1) = \Phi_0(f_2) \circ \left(\bifun{F}_1(\phi) * \Phi_0(X)\right). \]
This equality exactly means that $\Phi_0$ is natural.
\end{proof}

Being an inverter in $\bicat{B}^{\bicat{A}}$ \emph{does not} automatically imply being an inverter pointwise.
It is however the case if $\bicat{B}$ admits all inverters.

\section{Triangle of biadjunctions}\label{sec:biadj_triangle}

Let be a triangle of biadjunctions
\begin{equation}
\begin{tikzcd}
    \bicat{A}
        \dar[shift left, "\bifun{F}", ""{coordinate, name=F}]
        \ar[dr, shift left=3.5, "\bifun{H}"{inner sep=0.2ex}, ""{coordinate, name=H}] & \\
    \bicat{B}
        \uar[shift left=1.5, "\bifun{L}", ""{coordinate, name=L}]
        \rar["\bifun{G}"] &
    \bicat{C}
        \ar[ul, shift right, "\bifun{N}"{inner sep=0.2ex}, ""{coordinate, name=N}]
    \ar[from=L, to=F, phantom, "\dashv"{marking, font=\scriptsize}]
    \ar[from=N, to=H, phantom, "\dashv"{marking, font=\scriptsize}]
\end{tikzcd}
\end{equation}
with a commutation (up to a pseudonatural equivalence) $\Gamma \colon \bifun{G F} \Isoto \bifun{H}$.
Our aim is to build a pseudofunctor $\bifun{M} \colon \bicat{C} \to \bicat{B}$ left biadjoint to $\bifun{G}$.
Denote by $\Phi$ the unit of $\bifun{L} \dashv \bifun{F}$, by $\Psi$ the unit of $\bifun{N} \dashv \bifun{H}$ and by $\Upsilon$ its counit.

The pseudofunctor $\bifun{M}$ will be an inverter in the bicategory $\bicat{B}^{\bicat{C}}$ of pseudofunctors from $\bicat{C}$ to $\bicat{B}$.
For the moment, the most basic pseudofunctors from $\bicat{C}$ to $\bicat{B}$ at our disposal are $\bifun{F N}$ and $\bifun{(F L)(F N)}$.
They are linked by the pseudonatural transformation $\Phi_{\bifun{F N}}$.
Let us build an other pseudonatural transformation $\Xi \colon \bifun{F N} \Rightarrow \bifun{(F L)(F N)}$.

The pseudonatural equivalence $\Gamma$ permits to turn the biadjunction $\bifun{N} \dashv \bifun{H}$
into a biadjunction $\bifun{N} \dashv \bifun{G F}$ with unit and counit respectively
\[ \Psi' \colon \ID_{\bicat{C}} \xRightarrow{\Psi} \bifun{H N} \xRightarrow{\Gamma^{-1}_{\bifun{N}}} \bifun{(G F) N}, \qquad
\Upsilon' \colon \bifun{N} (\bifun{G F}) \xRightarrow{\bifun{N} \Gamma} \bifun{N H} \xRightarrow{\Upsilon} \ID_{\bicat{A}}. \]
This biadjunction produces a first pseudonatural transformation
\[ \Theta \colon \bifun{N G} \xRightarrow{\bifun{N} (\Phi')}
\bifun{N} ((\bifun{G F}) \bifun{L}) \Isoto
(\bifun{N} (\bifun{G F})) \bifun{L} \xRightarrow{\Upsilon'_{\bifun{L}}} \bifun{L} \]
from
\[ \Phi' \colon \bifun{G}  \xRightarrow{\bifun{G} \Phi} \bifun{G} (\bifun{F L}) \Isoto (\bifun{G F}) \bifun{L} \]
where the unnamed pseudonatural equivalences are the appropriate associators.
Pre-whiskering this $\Theta$ by $\bifun{F}$ and precomposing it with an associator
yield an other pseudonatural transformation
\begin{equation}\label{eq:Theta'}
    \Theta' \colon (\bifun{F N}) \bifun{G} \Isoto \bifun{F} (\bifun{N G}) \xRightarrow{\bifun{F} \Theta} \bifun{F L}.
\end{equation}
Postcomposing $\Psi'$ by an associator provides
\[ \Psi'' \colon \ID_{\bicat{C}} \xRightarrow{\Psi'} \bifun{(G F) N} \Isoto \bifun{G} (\bifun{F N}). \]
Now, we set
\[ \Xi \colon \bifun{F N} \xRightarrow{\Xi'} ((\bifun{F N}) \bifun{G}) (\bifun{F N}) \xRightarrow{\Theta'_{\bifun{F N}}} (\bifun{F L}) (\bifun{F N}) \]
with
\[ \Xi' \colon \bifun{F N} \xRightarrow{\bifun{F N} (\Psi'')} (\bifun{F N}) (\bifun{G} (\bifun{F N})) \Isoto ((\bifun{F N}) \bifun{G}) (\bifun{F N}). \]

Let us assume that there is a modification $\mu$ filling
\begin{equation}
\begin{tikzcd}[arrows=Rightarrow, column sep=large]
    \bifun{F N}
        \rar[bend left, start anchor=north east, end anchor=north west, "\Phi_{\bifun{F N}}", ""{coordinate, name=PhiFN}]
        \rar[bend right, start anchor=south east, end anchor=south west, "\Xi" below, ""{coordinate, name=Xi}]
        \ar[from=PhiFN, to=Xi, nfold=3, shorten=0.2em, "\mu"] &
    (\bifun{F L})(\bifun{F N})
\end{tikzcd}.
\end{equation}
Let us also assume that this $\mu$ has an inverter
$(\bifun{M}, \Lambda \colon \bifun{M} \Rightarrow \bifun{F N})$
in $\bicat{B}^{\bicat{C}}$. 
What are the conditions for this $\bifun{M}$ to be left biadjoint to $\bifun{G}$?
We need to build successively: an unit; a counit; left and right triangle laws.
Additional conditions will be required at each step,
but finally we will end up with the following theorem.

\begin{theorem}\label{thm:biadj_triangle}
With the above notations,
assume the following conditions:
\begin{enumerate}
\item there is a modification $\mu \colon \Phi_{\bifun{F N}} \Rrightarrow \Xi$
admitting an inverter $(\bifun{M}, \Lambda)$ in $\bicat{B}^{\bicat{C}}$;
\item\label{it:G_pres_inv} $\bifun{G}$ preserves this inverter, i.e.
$(\bifun{G M}, \bifun{G}(\Lambda))$ is still an inverter of $\bifun{G}(\mu)$;
\item\label{it:Gmu_inv} $\bifun{G}(\mu) * \Psi''$ is invertible;
\item\label{it:Phi_desc_type} $(\ID_{\bicat{B}}, \Phi)$ is an inverter of a modification $\nu \colon \Phi_{\bifun{FL}} \Rrightarrow \bifun{F L}(\Phi)$
such that
\begin{equation}\label{eq:comp_mu_nu}
\sbox{\tmpcontent}{$(\bifun{F N}) \bifun{G}$}
\begin{tikzcd}[arrows=Rightarrow, column sep=scriptsize]
    \usebox{\tmpcontent}
        \rar[bend left, "(\Phi_{\bifun{F N}})_{\bifun{G}}", ""{coordinate, name=PhiFNG}]
        \rar[bend right, "\Xi_{\bifun{G}}"{name=XiG1}, ""{coordinate, name=XiG2}]
        \ar[from=PhiFNG, to=XiG1, nfold=3, shorten <=0.3em, "\mu_{\bifun{G}}" swap]
        \dar{\Theta'} &
    ((\bifun{F L}) (\bifun{F N})) \bifun{G} \dar{\Theta''} \\
    |[text width=\wd\tmpcontent, text depth=\dp\tmpcontent, align=center]| \bifun{F L} \rar[bend right, "\bifun{F L} (\Phi)"{name=FLPhi}] \ar[from=XiG2, to=FLPhi, nfold=3, shorten <=0.3em, "\sim" sloped, "\phi_2" swap] &
    (\bifun{F L}) (\bifun{F L})
\end{tikzcd} =
\begin{tikzcd}[arrows=Rightarrow, column sep=scriptsize]
    \usebox{\tmpcontent} \rar[bend left, "(\Phi_{\bifun{F N}})_{\bifun{G}}", ""{coordinate, name=PhiFNG}] \dar{\Theta'} & ((\bifun{F L}) (\bifun{F N})) \bifun{G} \dar{\Theta''} \\
    |[text width=\wd\tmpcontent, text depth=\dp\tmpcontent, align=center]| \bifun{F L} \rar[bend left, "\Phi_{\bifun{F L}}"{name=PhiFL1}, ""{coordinate, name=PhiFL2}] \rar[bend right, "\bifun{F L} (\Phi)"{name=FLPhi}] \ar[from= PhiFNG, to= PhiFL1, nfold=3, shorten <=0.3em, "\sim"{sloped, swap}, "\phi_1" swap] \ar[from= PhiFL2, to=FLPhi, nfold=3, shorten <=0.3em, "\nu" swap] &
(\bifun{F L}) (\bifun{F L})
\end{tikzcd};
\end{equation}
\item $\bifun{G}$ preserves the fully faithfulness of $\Phi$, i.e. $\bifun{G}(\Phi)$ remains fully faithful;
\item $\Phi_{\bifun{F N}}$ is also fully faithful.
\end{enumerate}
Then $\bifun{M}$ constitutes a left biadjoint of $\bifun{G}$.
\end{theorem}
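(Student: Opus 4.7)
The plan is to produce in succession a unit $\Eta\colon \ID_{\bicat{C}} \Rightarrow \bifun{G M}$, a counit $\Epsilon\colon \bifun{M G} \Rightarrow \ID_{\bicat{B}}$, and two invertible modifications witnessing the triangle laws of the sought biadjunction $\bifun{M} \dashv \bifun{G}$.

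For the unit, hypothesis~(\ref{it:Gmu_inv}) states that $\bifun{G}(\mu) * \Psi''$ is invertible, which exactly means that $(\ID_{\bicat{C}}, \Psi'')$ is an inverter cone of $\bifun{G}(\mu)$ in $\bicat{B}^{\bicat{C}}$. By hypothesis~(\ref{it:G_pres_inv}), $(\bifun{G M}, \bifun{G}\Lambda)$ is itself a universal such cone, so its defining property produces a pseudonatural transformation $\Eta\colon \ID_{\bicat{C}} \Rightarrow \bifun{G M}$ equipped with an invertible modification $\Psi'' \Iisoto \bifun{G}\Lambda \circ \Eta$; this $\Eta$ will serve as the unit.

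For the counit, the modification $\mu * \Lambda$ is invertible by the very definition of the inverter $\Lambda$, hence so is the pre-whiskered $\mu_{\bifun{G}} * \Lambda_{\bifun{G}}$. Pre-whiskering the compatibility equation~(\ref{eq:comp_mu_nu}) by $\Lambda_{\bifun{G}}$ and combining the invertibility of $\phi_1$, $\phi_2$ with that of $\mu_{\bifun{G}} * \Lambda_{\bifun{G}}$, I conclude that $\nu * (\Theta' \circ \Lambda_{\bifun{G}})$ is also invertible. Thus $(\bifun{M G}, \Theta' \circ \Lambda_{\bifun{G}})$ is an inverter cone of $\nu$, and hypothesis~(\ref{it:Phi_desc_type}) yields $\Epsilon\colon \bifun{M G} \Rightarrow \ID_{\bicat{B}}$ together with an invertible modification $\Phi \circ \Epsilon \Iisoto \Theta' \circ \Lambda_{\bifun{G}}$.

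For the triangle laws, I need invertible modifications comparing, to the identities, the composites $\bifun{G} \Rightarrow (\bifun{G M}) \bifun{G} \Rightarrow \bifun{G} (\bifun{M G}) \Rightarrow \bifun{G}$ and $\bifun{M} \Rightarrow \bifun{M} (\bifun{G M}) \Rightarrow (\bifun{M G}) \bifun{M} \Rightarrow \bifun{M}$ assembled from $\Eta$, $\Epsilon$ and associators. Since $\Eta$ and $\Epsilon$ are only characterised up to invertible modification through their defining post-whiskerings with $\bifun{G}\Lambda$ and $\Phi$, each triangle will be reduced through a chain of fully faithful morphisms until the equation to be checked sits at a level where the universal properties can be unfolded. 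The $\bifun{G}$-triangle is post-whiskered by $\bifun{G}\Phi$, fully faithful by assumption, reducing it to an equation of modifications into $\bifun{G}(\bifun{F L})$; the $\bifun{M}$-triangle is post-whiskered first by the inverter $\Lambda$ (fully faithful as any inverter) and then by $\Phi_{\bifun{F N}}$, also fully faithful by assumption, reducing it to an identity expressible purely in terms of $\Phi$, $\Psi$, $\Upsilon$ and the ambient coherences.

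The main obstacle is the explicit verification of these reduced equations. Once the definitions of $\Eta$ and $\Epsilon$ are substituted, both unfold into large pasting identities that combine the coherences of the two ambient biadjunctions with the compatibility~(\ref{eq:comp_mu_nu}); they are routine in principle but prohibitively cumbersome by hand, which is precisely why the authors delegate them to the Coq formalisation.
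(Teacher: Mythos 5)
Your construction of the unit and counit is exactly the paper's: condition~\ref{it:Gmu_inv} makes $(\ID_{\bicat{C}}, \Psi'')$ an inverter cone of $\bifun{G}(\mu)$, so condition~\ref{it:G_pres_inv} yields the unit (the paper's $\Omega$, with $\omega \colon \Psi'' \Iisoto \bifun{G}(\Lambda) \circ \Omega$); and pre-whiskering \eqref{eq:comp_mu_nu} by $\Lambda_{\bifun{G}}$ together with the invertibility of $\mu * \Lambda$ shows $\nu * (\Theta' \circ \Lambda_{\bifun{G}})$ is invertible, so condition~\ref{it:Phi_desc_type} yields the counit (the paper's $\Sigma$, with $\sigma$). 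Your reduction strategy for the triangle laws also matches: the $\bifun{M}$-triangle is tested against the fully faithful $\Phi_{\bifun{F N}} \circ \Lambda$ and the $\bifun{G}$-triangle against the fully faithful $\bifun{G}(\Phi)$.

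The gap is in your last step. After these reductions what remains is not the verification of an equation that follows from ambient coherence, but the construction of two specific invertible modifications, and each requires a non-formal input that your proposal does not identify. For the $\bifun{M}$-triangle, after substituting $\sigma$ and $\omega$ and unwinding $(\Theta' \circ \Lambda_{\bifun{G}})_{\bifun{F N}}$, the composite you must compare with $\Phi_{\bifun{F N}} \circ \Lambda$ is (up to coherence) $\Xi \circ \Lambda$, and the closing isomorphism is precisely $(\mu * \Lambda)^{-1}$ --- the inverter property of $\Lambda$ re-enters here, so the reduced statement is not ``expressible purely in terms of $\Phi$, $\Psi$, $\Upsilon$ and the ambient coherences''. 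For the $\bifun{G}$-triangle, after applying $\bifun{G}(\sigma)^{-1}$ and $\omega_{\bifun{G}}$ one is left with $\bifun{G}(\Theta') \circ \Alpha_{\bifun{G}, \bifun{F N}, \bifun{G}} \circ \Psi''_{\bifun{G}}$, and closing it requires the modification $\phi''_2$ of Subsection~\ref{subsec:counit}, whose construction is itself a substantive piece of the proof: it unwinds $\Psi''$, $\Theta'$ and $\Phi'$ through pentagonators and whiskering exchanges and ultimately invokes the right triangle law of the transported biadjunction $\bifun{N} \dashv \bifun{G F}$ (unit $\Psi'$, counit $\Upsilon'$). Declaring these steps ``routine pasting identities'' and deferring them to the formalisation skips the two places where the hypotheses actually do work beyond coherence; a complete argument must at least exhibit these two modifications, as the paper does.
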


The pseudonatural transformation $\Theta''$ and
the modifications $\phi_1$ and $\phi_2$ appearing in condition~\ref{it:Phi_desc_type}
are going to be built later in Subsection~\ref{subsec:counit}.
Condition~\ref{it:Phi_desc_type} implies that $\Phi$ is fully faithful,
autrement dit $\bifun{L}$ is \emph{locally faithful}.
Being locally faithful means that for any objects $B_1$ and $B_2$ of $\bicat{B}$, the functor
\[ \bifun{L}_{B_1, B_2} \colon \biHom_{\bicat{B}}(B_1, B_2) \to \biHom_{\bicat{B}}(\bifun{L}(B_1), \bifun{L}(B_2)) \]
is fully faithful.

Frequently, the inverter $(\bifun{M}, \Lambda)$ comes from objectwise inverters,
i.e. whatever the object $C$ of $\bicat{C}$,
the 2-morphism $\mu_C \colon \bifun{F N}(C) \Rightarrow (\bifun{F L})(\bifun{F N})(C)$ has an inverter
\[ \Lambda_C \colon M(C) \to \bifun{F N}(C) \]
in $\bicat{B}$.
According to the previous section, these data combine into a pseudofunctor $\bifun{M}$
with a pseudonatural transformation
\[ \Lambda \colon \bifun{M} \Rightarrow \bifun{F N} \]
making up an inserter of $\mu$ in $\bicat{B}^{\bicat{C}}$. 
This stronger hypothesis allows to weaken the other hypotheses of the preceding theorem, leading to the following.

\begin{theorem}\label{thm:objectwise_biadj_triangle}
With the above notations,
assume the following conditions:
\begin{enumerate}
\item there is a modification $\mu \colon \Phi_{\bifun{F N}} \Rrightarrow \Xi$ such that,
for any object $C$ of $\bicat{C}$, $\mu_C$ has an inverter
\[ (M(C), \Lambda_C \colon M(C) \to \bifun{F N}(C)) \]
in $\bicat{B}$;
\item\label{it:G_pres_all_inv} $\bifun{G}$ transforms any inverter in $\bicat{B}$ into an inverter in $\bicat{C}$;
\item $\bifun{G}(\mu) * \Psi''$ is invertible;
\item there is a modification $\nu \colon \Phi_{\bifun{FL}} \Rrightarrow \bifun{F L}(\Phi)$ satisfying
\eqref{eq:comp_mu_nu}
and such that for any object $B$, $(B, \Phi_B)$ is an inverter of $\nu_B$.
\end{enumerate}
Then the various $M(C)$, $C$ object of $\bicat{C}$, combine into a pseudofunctor $\bifun{M}$
which constitutes a left biadjoint of $\bifun{G}$.
\end{theorem}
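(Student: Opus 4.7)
The plan is to derive this from Theorem~\ref{thm:biadj_triangle} by lifting each objectwise hypothesis to the functor bicategory. First, I would apply the corollary of Section~\ref{sec:bicat} to assemble the family $\{(M(C), \Lambda_C)\}_{C}$ into a pseudofunctor $\bifun{M} \colon \bicat{C} \to \bicat{B}$ equipped with a pseudonatural transformation $\Lambda \colon \bifun{M} \Rightarrow \bifun{F N}$ forming an inverter of $\mu$ in $\bicat{B}^{\bicat{C}}$. This immediately yields hypothesis~1 of Theorem~\ref{thm:biadj_triangle}, while its hypothesis~3 and the compatibility~(\ref{eq:comp_mu_nu}) are literally those assumed here.

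Next I would lift the remaining three hypotheses using the same pointwise-to-global tools. For hypothesis~\ref{it:G_pres_inv}: since $\bifun{G}$ is assumed to transform every inverter in $\bicat{B}$ into one in $\bicat{C}$, each $(\bifun{G}\Lambda)_C = \bifun{G}(\Lambda_C)$ is an inverter of $\bifun{G}(\mu)_C$ in $\bicat{C}$, so reapplying the proposition on pointwise inverters gives that $(\bifun{G M}, \bifun{G}(\Lambda))$ is an inverter of $\bifun{G}(\mu)$ in $\bicat{C}^{\bicat{C}}$. For hypothesis~\ref{it:Phi_desc_type}, the same proposition applied to $\nu$ and the family $(\Phi_B)_B$ shows that $(\ID_{\bicat{B}}, \Phi)$ is an inverter of $\nu$ in $\bicat{B}^{\bicat{B}}$. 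Finally, the two full faithfulness requirements follow from the lemma asserting that a pointwise fully faithful pseudonatural transformation is fully faithful in the functor bicategory: each $\Phi_{\bifun{FN}(C)}$ is an inverter of $\nu_{\bifun{FN}(C)}$, hence fully faithful in $\bicat{B}$, which upgrades $\Phi_{\bifun{F N}}$; likewise, by the preservation assumption each $\bifun{G}(\Phi_B)$ is an inverter of $\bifun{G}(\nu_B)$ in $\bicat{C}$, therefore fully faithful, upgrading $\bifun{G}(\Phi)$.

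With all five hypotheses of Theorem~\ref{thm:biadj_triangle} in hand, I would invoke it directly to conclude that the assembled $\bifun{M}$ is left biadjoint to $\bifun{G}$. The whole argument is little more than bookkeeping: the substantive work is already done in Section~\ref{sec:bicat}, in the construction of $\bifun{M}$ from its pointwise values and in the two results showing that pointwise inverters and pointwise fully faithful transformations rise to $\bicat{B}^{\bicat{A}}$. The only point that requires care is verifying that the pseudofunctorial structure of $\bifun{M}$ furnished by the corollary is precisely the one with respect to which the hypotheses of Theorem~\ref{thm:biadj_triangle} are then asserted; but this is guaranteed by the construction, since $\Lambda$ is built to make $(\bifun{M}, \Lambda)$ universal.
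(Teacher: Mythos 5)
Your proposal is correct and follows essentially the paper's own route: the paper likewise assembles the objectwise inverters $(M(C),\Lambda_C)$ into an inverter $(\bifun{M},\Lambda)$ of $\mu$ in $\bicat{B}^{\bicat{C}}$ via the corollary of Section~\ref{sec:bicat}, and then runs the construction of Theorem~\ref{thm:biadj_triangle}, using the pointwise-inverter proposition and the pointwise-fully-faithful lemma to supply its remaining hypotheses (indeed the paper's remark that condition~\ref{it:G_pres_all_inv} need only be checked on $(M(C),\Lambda_C)$ and $(B,\Phi_B)$ matches exactly the two places where you invoke preservation by $\bifun{G}$). The only cosmetic point is that the components of the whiskered transformations $\bifun{G}(\Phi)$ and $\Phi_{\bifun{F N}}$ differ from $\bifun{G}(\Phi_B)$ and $\Phi_{\bifun{F N}(C)}$ by unitor isomorphisms under the paper's notational conventions, which does not affect full faithfulness.
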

As it will appear from the construction,
condition~\ref{it:G_pres_all_inv} can only be checked for the inverters $(M(C), \Lambda_C)$ and $(B, \Phi_B)$.

In what follows, we will start by assuming only the existence of $\mu$.
The required assumptions will be added as they arise.

\subsection{The unit}\label{subsec:unit}

First of all, $\bifun{G}$ has to preserve the weighted limits existing in $\bicat{C}$,
because otherwise it could never have a left biadjoint.
Therefore, $\bifun{G}(\Lambda) \colon \bifun{G M} \Rightarrow \bifun{G}(\bifun{F N})$ has to be an inverter of $\bifun{G}(\mu) \colon \bifun{G}(\Phi_{\bifun{F N}}) \Rrightarrow \bifun{G}(\Xi)$.
The unit of the sought biadjunction will be a pseudonatural transformation $\ID_{\bicat{C}} \Rightarrow \bifun{G M}$.

If the modification $\bifun{G}(\mu) * \Psi''$ is invertible,
the universal property of inverter ensures the existence of a pseudonatural transformation
and an invertible modification filling the diagram
\begin{equation}
\begin{tikzcd}[arrows=Rightarrow]
    \ID_{\bicat{C}} \ar[dr, "\Psi''", ""{coordinate, name=Psi}] \dar[dashed, "\Omega"] & \\
    \bifun{G M} \rar[""{coordinate, name=GLambda}, "\bifun{G}(\Lambda)" swap] & \bifun{G(F N)}
    \ar[from=Psi, to=GLambda, nfold=3, bend right, shorten=0.25em, "\sim" sloped, "\omega"{near start}]
\end{tikzcd}.
\end{equation}
The invertibility of $\bifun{G}(\mu) * \Psi''$ is thus part of the conditions of our theorem.

\subsection{The counit}\label{subsec:counit}

Having constructed the unit of the biadjunction, the second step is now to build its counit,
which is a pseudonatural transformation $\bifun{M G} \Rightarrow \ID_{\bicat{B}}$.
To do so, $\bifun{F}$ must satisfy a bicategorical equivalent of being of descent type.
One possibility would be to require that
the unit $\Phi \colon \ID_{\bicat{B}} \Rightarrow \bifun{F L}$ constitutes
a descent object of the truncated pseudomonad associated to the biadjunction $\bifun{L} \dashv \bifun{F}$.
That is what \cite{Nunes_2016} does.

We will instead assume the existence a modification $\nu$ filling the diagram
\begin{equation}
\begin{tikzcd}[arrows=Rightarrow, column sep=large]
    \bifun{F L}
        \rar[bend left, start anchor=north east, end anchor=north west, "\Phi_{\bifun{F L}}", ""{coordinate, name=PhiFL}]
        \rar[bend right, start anchor=south east, end anchor=south west, "(\bifun{F L})(\Phi)" below, ""{coordinate, name=FLPhi}]
        \ar[from=PhiFL, to=FLPhi, nfold=3, shorten=0.2em, "\nu"] &
    (\bifun{F L})(\bifun{F L})
\end{tikzcd}
\end{equation}
of which $(\ID_{\bicat{B}}, \Phi)$ constitutes an inverter.

To obtain a pseudonatural transformation $\ID_{\bicat{B}} \Rightarrow \bifun{F L}$ by universal property of inverters,
additional hypotheses have to be added.
First, we notice the existence of dashed pseudonatural transformations and invertible modifications in
\[\begin{tikzcd}[arrows=Rightarrow, column sep=large]
    \bifun{F L}
        \ar[rrr, bend left, start anchor=north east, end anchor=north west, "\Phi_{\bifun{F L}}", ""{coordinate, name=PhiFL}]
        \ar[rrr, bend right, start anchor=south east, end anchor=south west, "\bifun{F L} (\Phi)" below, ""{coordinate, name=FLPhi}] &
    (\bifun{F N}) \bifun{G}
        \lar[dashed, "\Theta'" swap]
        \rar[bend left, start anchor=north east, end anchor=north west, "(\Phi_{\bifun{F N}})_{\bifun{G}}"{name=PhiFNG}]
        \rar[bend right, start anchor=south east, end anchor=south west, "\Xi_{\bifun{G}}"{below, name=XiG}]
        \ar[from=PhiFNG, to=PhiFL, dashed, nfold=3, shorten >=0.2em, "\sim"{sloped, near start}, "\phi_1" swap]
        \ar[from=XiG, to=FLPhi, dashed, nfold=3, shorten >=0.2em, "\sim"{sloped, near start}, "\phi_2"{swap, near start}] &
    (\bifun{(F L)(F N)}) \bifun{G} \rar[dashed, "\Theta''"] &
    (\bifun{F L}) (\bifun{F L})
\end{tikzcd}.\]
The pseudonatural transformation $\Theta'$ has already been built in \eqref{eq:Theta'}.
Using $\Theta'$ and an associator of pseudofunctors composition,
one sets
\begin{equation}
    \Theta'' \colon ((\bifun{F L})(\bifun{F N})) \bifun{G} \Isoto (\bifun{F L})((\bifun{F N}) \bifun{G}) \xRightarrow{(\bifun{F L}) (\Theta')} (\bifun{F L})(\bifun{F L}).
\end{equation}
The modification $\phi_1$ is given by
\[\begin{tikzcd}[arrows=Rightarrow, column sep=tiny]
    & ((\bifun{F L})(\bifun{F N})) \bifun{G} \ar[dr, "\sim" sloped, "\Alpha_{\bifun{F L}, \bifun{F N}, \bifun{G}}"{near end}] & \\[+0.9em]
    (\bifun{F N}) \bifun{G}
        \ar[ur, "(\Phi_{\bifun{F N}})_{\bifun{G}}"]
        \ar[rr, "\Phi_{(\bifun{F N}) \bifun{G}}"{name=PhiFNG1}, ""{coordinate, name=PhiFNG2}]
        \dar{\Theta'}
        \ar[from=ur, to=PhiFNG1, nfold=3, shorten <=0.3em, "\sim"{sloped, swap}, "\Alpha_{\Phi, \bifun{F N}, \bifun{G}}^{-1}"] & &
    (\bifun{F L})((\bifun{F N}) \bifun{G}) \dar["\bifun{F L}(\Theta')"] \\
    \bifun{F L} \ar[rr, "\Phi_{\bifun{F L}}"{name=PhiFL}] & & (\bifun{F L})(\bifun{F L})
        \ar[from=PhiFNG2, to=PhiFL, nfold=3, shorten <=0.2em, "\sim" sloped, "\Phi_{\Theta'}" swap]
\end{tikzcd}.\]
The construction of $\phi_2$ is more involved and will be postponed for the moment.

Now we postulate the equality \eqref{eq:comp_mu_nu} which can also be written
\[ \phi_2 \circ (\Theta'' * \mu_{\bifun{G}}) = (\nu * \Theta') \circ \phi_1. \]
It follows that, denoting by $\alpha$ the appropriate associators in $\bicat{B}^{\bicat{B}}$,
\begin{align*}
    \nu * (\Theta' \circ \Lambda_{\bifun{G}})
    &= \alpha_{(\bifun{F L})\Phi, \Theta', \Lambda_{\bifun{G}}}
    \circ \left((\nu * \Theta') * \Lambda_{\bifun{G}} \right)
    \circ \alpha_{\Phi_{\bifun{F L}}, \Theta', \Lambda_{\bifun{G}}}^{-1} \\
    &= \alpha_{(\bifun{F L})\Phi, \Theta', \Lambda_{\bifun{G}}}
    \circ \left( \phi_2 \circ (\Theta'' * \mu_{\bifun{G}}) \circ \phi_1^{-1} \right) * \Lambda_{\bifun{G}}
    \circ \alpha_{\Phi_{\bifun{F L}}, \Theta', \Lambda_{\bifun{G}}}^{-1}
\end{align*}
and
\begin{multline*}
    \left( \phi_2 \circ (\Theta'' * \mu_{\bifun{G}}) \circ \phi_1^{-1} \right) * \Lambda_{\bifun{G}} =
    (\phi_2 * \Lambda_{\bifun{G}}) \circ \alpha_{\Theta'', \Xi_{\bifun{G}}, \Lambda_{\bifun{G}}}^{-1}
    \circ \left( \Theta'' * (\mu_{\bifun{G}} * \Lambda_{\bifun{G}}) \right) \\
    \circ \alpha_{\Theta'', (\Phi_{\bifun{F N}})_{\bifun{G}}, \Lambda_{\bifun{G}}} \circ (\phi_1^{-1} * \Lambda_{\bifun{G}}).
\end{multline*}
Furthermore, the pseudofunctoriality of the pre-whiskering by $\bifun{G}$
furnishes invertible modifications $\delta_1$ and $\delta_2$ such that
\[ \mu_{\bifun{G}} * \Lambda_{\bifun{G}} = \delta_2^{-1} \circ (\mu * \Lambda)_{\bifun{G}} \circ \delta_1. \]
Therefore, $\mu_{\bifun{G}} * \Lambda_{\bifun{G}}$ and $\nu * (\Theta' \circ \Lambda_{\bifun{G}})$
are invertible, hence the existence of $\Sigma$ and $\sigma$ filling
\[\begin{tikzcd}[arrows=Rightarrow]
    \bifun{M G} \rar{\Lambda_{\bifun{G}}} \dar[dashed]{\Sigma} &
    (\bifun{F N}) \bifun{G} \dar["\Theta'", ""{coordinate, near start, name=Theta}] \\
    \ID_{\bicat{B}} \rar["\Phi"{name=Phi}] & \bifun{F L}
    \ar[from=Theta, to=Phi, bend right, dashed, shorten <=0.4em, nfold=3, "\sim" sloped, "\sigma"]
\end{tikzcd}.\]

Let us return to the construction of $\phi_2$.
We will construct intermediary modifications and assemble them.
First, we have the following diagram
\[\sbox{\tmpcontent}{$(((\bifun{F N}) \bifun{G}) (\bifun{F N})) \bifun{G}$}
\begin{tikzcd}[arrows=Rightarrow, column sep={\the\dimexpr\wd\tmpcontent*3/2+2.4em\relax,between origins}]
    (\bifun{F N}) \bifun{G} \rar{\Xi'_{\bifun{G}}} \ar[rr, bend left=25, "\Xi_{\bifun{G}}", ""{coordinate, name=XiG}] \ar[ddr, bend right=20, "((\bifun{F N}) \bifun{G}) \Phi"{description, name=FNGPhi}] \ar[dd, "\Theta'" swap] &
    \usebox{\tmpcontent} \rar["((\Theta')_{\bifun{F N}})_{\bifun{G}}", ""{name=ThetaFNG3}] \dar["\sim" sloped] \ar[from=XiG, nfold=3, shorten <=0.3em, "\sim"{sloped, swap}, "\Alpha_{\Theta', \bifun{F N}, \bifun{G}}^{-1}"] \ar[to=FNGPhi, bend right=10, dashed, nfold=3, "\sim" sloped] &
    ((\bifun{F L})(\bifun{F N})) \bifun{G} \dar["\sim" sloped] \\
    & ((\bifun{F N}) \bifun{G})((\bifun{F N}) \bifun{G}) \rar["(\Theta')_{(\bifun{F N}) \bifun{G}}"{name=ThetaFNG}] \dar{((\bifun{F N}) \bifun{G}) \Theta'} \ar[from=ThetaFNG3, to=ThetaFNG, nfold=3, shorten <=0.3em, "\sim" sloped] & (\bifun{F L})((\bifun{F N}) \bifun{G}) \dar{(\bifun{F L}) \Theta'} \\
    |[text depth=\dp\tmpcontent]| \bifun{F L} \ar[rr, bend right=35, "(\bifun{F L}) \Phi"{name=FLPhi}] &
    ((\bifun{F N}) \bifun{G})(\bifun{F L}) \rar["(\Theta')_{\bifun{F L}}"{name=ThetaFL}] \ar[from=ThetaFNG, to=ThetaFL, nfold=3, shorten <=0.3em, "\sim"{sloped, swap}, "(\Theta')_{\Theta'}"] \ar[to=FLPhi, nfold=3, "\sim" sloped, "(\Theta')_{\Phi}^{-1}"] &
    (\bifun{F L})(\bifun{F L})
\end{tikzcd}\]
whose dashed arrow is the only one which remains to construct.

By associativity of whiskering, $\Xi'_{\bifun{G}}$ is isomorphic to
\[ (\bifun{F N}) \bifun{G} \xRightarrow{\bifun{F N}(\Psi''_{\bifun{G}})}
(\bifun{F N}) ((\bifun{G} (\bifun{F N})) \bifun{G}) \Isoto
((\bifun{F N}) (\bifun{G} (\bifun{F N}))) \bifun{G} \Isoto
(((\bifun{F N}) \bifun{G}) (\bifun{F N})) \bifun{G}. \]
A pentagonator combined with the pseudonaturality of a right associator gives
\[\sbox{\tmpcontent}{$(\bifun{F N}) (\bifun{G} ((\bifun{F N}) \bifun{G}))$}
\begin{tikzcd}[arrows=Rightarrow, column sep={\the\wd\tmpcontent,between origins}, row sep=large]
    &[-6.4em] ((\bifun{F N}) (\bifun{G} (\bifun{F N}))) \bifun{G}
        \ar[rr, "(\Alpha_{\bifun{F N}, \bifun{G}, \bifun{F N}}^{-1})_{\bifun{G}}"{swap, name=Assoc, inner sep=1.2ex}] &[-3.em]&[-3.em]
    (((\bifun{F N}) \bifun{G}) (\bifun{F N})) \bifun{G}
        \ar[dr, "\Alpha_{(\bifun{F N}) \bifun{G}, \bifun{F N}, \bifun{G}}" description] &[-6.4em] &[+1.2em] \\
    (\bifun{F N}) ((\bifun{G} (\bifun{F N})) \bifun{G})
        \ar[ur, "\Alpha_{\bifun{F N}, \bifun{G} (\bifun{F N}), \bifun{G}}^{-1}" description]
        \ar[drr, "(\bifun{F N}) (\Alpha_{\bifun{G}, \bifun{F N}, \bifun{G}})" description] &&&&
    ((\bifun{F N}) \bifun{G}) ((\bifun{F N}) \bifun{G}) \rar["((\bifun{F N}) \bifun{G}) (\Theta')"{inner sep=1.2ex}]
        \ar[dr, nfold=3, "\sim" sloped] &
    ((\bifun{F N}) \bifun{G}) (\bifun{F L}) \\
    && \usebox{\tmpcontent}
        \ar[urr, "\Alpha_{\bifun{F N}, \bifun{G}, (\bifun{F N}) \bifun{G}}^{-1}" description]
        \ar[rrr, "(\bifun{F N}) (\bifun{G} \Theta')"{swap, inner sep=1.2ex}] \ar[from=Assoc, nfold=3, "\sim" sloped] &&&
    |[xshift=-4.8em]| (\bifun{F N}) (\bifun{G} (\bifun{F L})) \ar[u, "\Alpha_{\bifun{F N}, \bifun{G}, \bifun{F L}}^{-1}" swap]
\end{tikzcd}.\]
Composing these two invertible modifications and gathering the pseudonatural transformations sharing $\bifun{F N}$,
we get an invertible modification
\[ \phi'_2 \colon ((\bifun{F N}) \bifun{G}) \Theta' \circ \Alpha_{(\bifun{F N}) \bifun{G}, \bifun{F N}, \bifun{G}} \circ \Xi'_{\bifun{G}} \Iisoto
\Alpha_{\bifun{F N}, \bifun{G}, \bifun{F L}}^{-1} \circ (\bifun{F N}) (\bifun{G} \Theta' \circ \Alpha \circ \Psi''_{\bifun{G}}). \]
We are thus reduced to fill
\[\begin{tikzcd}[arrows=Rightarrow, column sep=tiny, row sep=large]
    & (((\bifun{F N}) \bifun{G}) (\bifun{F N})) \bifun{G}
        \ar[rr, "\sim" sloped, "\Alpha_{(\bifun{F N}) \bifun{G}, \bifun{F N}, \bifun{G}}"{name=Assoc, swap}] &[-1.2em]&[-1.2em]
    ((\bifun{F N}) \bifun{G}) ((\bifun{F N}) \bifun{G})
        \ar[dr, "((\bifun{F N}) \bifun{G})(\Theta')"] \\
    (\bifun{F N}) \bifun{G}
        \ar[ur, end anchor={[xshift=1em]south west}, "\Xi'_{\bifun{G}}"]
        \ar[rr, bend left=12, "(\bifun{F N}) (\bifun{G} \Theta' \circ \Alpha \circ \Psi''_{\bifun{G}})", ""{coordinate, name=FNGTheta}]
        \ar[rr, bend right=12, "(\bifun{F N}) (\bifun{G} \Phi)"{description, name=FNGPhi1}]
        \ar[from= FNGTheta, to=FNGPhi1, dashed, nfold=3, shorten <=0.2em, "\sim"{sloped, swap}, "\bifun{F N}(\phi''_2)"]
        \ar[rrrr, bend right=20, end anchor=south west, "((\bifun{F N}) \bifun{G}) \Phi"{name=FNGPhi2}] &&
    (\bifun{F N}) (\bifun{G} (\bifun{F L}))
        \ar[rr, "\sim" sloped, "\Alpha_{\bifun{F N}, \bifun{G}, \bifun{F L}}^{-1}" swap]
        \ar[from=Assoc, nfold=3, "\sim"{sloped, swap}, "\phi'_2"]
        \ar[to=FNGPhi2, nfold=3, "\sim" sloped, "(\Alpha^{-1})_{\bifun{F N}, \bifun{G}, \Phi}^{-1}"] &&
    ((\bifun{F N}) \bifun{G}) (\bifun{F L})
\end{tikzcd}\]
with an appropriate $\phi''_2$.

Developing $\Psi''$ in term of $\Psi'$ and $\Theta'$ in term of $\Theta$ and then applying a pentagon lead to
\[\begin{tikzcd}[arrows=Rightarrow, column sep=small, row sep=large]
    &&[-4.2em] (\bifun{G} (\bifun{F N})) \bifun{G}
        \ar[rr, "\Alpha_{\bifun{G}, \bifun{F N}, \bifun{G}}"{swap, name=Assoc}] &[-3.em]&[-3.em]
    \bifun{G} ((\bifun{F N}) \bifun{G})
        \ar[dr, "\bifun{G} (\Alpha_{\bifun{F}, \bifun{N}, \bifun{G}})" description]
        \ar[drr, bend left, "\bifun{G}(\Theta')", ""{coordinate, name=GTheta}] &[-4.2em] \\
    \bifun{G}
        \ar[urr, bend left, "\Psi''_{\bifun{G}}", ""{coordinate, name=PsiG}]
        \rar[swap]{\Psi'_{\bifun{G}}} &
    ((\bifun{G F}) \bifun{N}) \bifun{G}
        \ar[ur, "(\Alpha_{\bifun{G}, \bifun{F}, \bifun{N}})_{\bifun{G}}" description]
        \ar[drr, "\Alpha_{\bifun{G F}, \bifun{N}, \bifun{G}}" description]
        \ar[from=PsiG, shorten <=0.3em, nfold=3, "\sim" sloped] &&&&
    \bifun{G} (\bifun{F} (\bifun{N G})) \rar{\bifun{G} (\bifun{F} \Theta)}
        \ar[from=GTheta, shorten <=0.3em, nfold=3, "\sim" sloped]
        \dar[nfold=3, "\sim" sloped] &
    \bifun{G} (\bifun{F L}) \\
    &&& (\bifun{G F}) (\bifun{N G})
        \ar[urr, "\Alpha_{\bifun{G}, \bifun{F}, \bifun{N G}}" description]
        \ar[rr, "(\bifun{G F}) \Theta"] \ar[from=Assoc, nfold=3, "\sim" sloped] &&
    (\bifun{G F}) \bifun{L} \ar[ur, "\Alpha_{\bifun{G}, \bifun{F}, \bifun{L}}" swap]
\end{tikzcd}.\]
Next, developing $\Theta$ and modifying the order of whiskering of $\Phi'$,
the following invertible modification is obtained
\[\begin{tikzcd}[arrows=Rightarrow, column sep=scriptsize, row sep=large]
    \bifun{G} \rar["\Psi'_{\bifun{G}}", ""{coordinate, xshift=-.5ex, name=PsiG}] \dar[swap]{\Phi'} &
    ((\bifun{G F}) \bifun{N}) \bifun{G} \rar["\Alpha_{\bifun{G F}, \bifun{N}, \bifun{G}}", ""{coordinate, name=Assoc1}] \dar["((\bifun{G F}) \bifun{N}) \Phi'" description] &
    (\bifun{G F}) (\bifun{N G}) \rar["(\bifun{G F}) \Theta", ""{coordinate, xshift=-1ex, name=GFTheta}] \dar["(\bifun{G F}) (\bifun{N} \Phi')" description] &
    (\bifun{G F}) \bifun{L} \\
    (\bifun{G F}) \bifun{L} \rar["\Psi'_{(\bifun{G F}) \bifun{L}}"{name=PsiGFL}] \ar[from=PsiG, to=PsiGFL, shorten <=0.3em, nfold=3, "\sim" sloped, "\Psi'_{\Phi'}" swap] &
    ((\bifun{G F}) \bifun{N}) ((\bifun{G F}) \bifun{L}) \rar["\Alpha_{\bifun{G F}, \bifun{N}, (\bifun{G F}) \bifun{L}}"{inner sep=1.2ex, name=Assoc2}] \ar[from=Assoc1, to=Assoc2, shorten <=0.3em, nfold=3, "\sim" sloped] &
    (\bifun{G F}) (\bifun{N} ((\bifun{G F}) \bifun{L})) \rar["(\bifun{G F})(\Alpha_{\bifun{N}, \bifun{G F}, \bifun{L}}^{-1})"{inner sep=1.2ex, name=GFAssoc}] \ar[from=GFTheta, to=GFAssoc, shorten <=0.3em, nfold=3, "\sim"{sloped, swap}] &
    (\bifun{G F}) ((\bifun{N} (\bifun{G F})) \bifun{L}) \uar[swap]{(\bifun{G F})(\Upsilon'_{\bifun{L}})}
\end{tikzcd}.\]
Since by definition of $\Phi'$ there is also an invertible modification
$\Alpha_{\bifun{G}, \bifun{F}, \bifun{L}} \circ \Phi' \Iisoto \bifun{G} \Phi$,
exhibiting an invertible modification
\[ (\bifun{G F})(\Upsilon'_{\bifun{L}}) \circ (\bifun{G F})(\Alpha_{\bifun{N}, \bifun{G F}, \bifun{L}}^{-1})
\circ \Alpha_{\bifun{G F}, \bifun{N}, (\bifun{G F}) \bifun{L}} \circ \Psi'_{(\bifun{G F}) \bifun{L}}
\Iisoto \Id_{(\bifun{G F}) \bifun{L}} \]
will conclude the construction of $\phi''_2$.

Using again a pentagonator and the pseudonaturality of the associators,
one obtains an invertible modification
\[\sbox{\tmpcontent}{$(\bifun{G F}) (\bifun{N} ((\bifun{G F}) \bifun{L}))$}
\begin{tikzcd}[arrows=Rightarrow, column sep={\the\wd\tmpcontent,between origins}, row sep=large]
    &&[-6.4em]&[-3em] \usebox{\tmpcontent}
        \ar[drr, "(\bifun{G F}) (\Alpha_{\bifun{N}, \bifun{G F}, \bifun{L}}^{-1})"] &[-3em]&[-6.4em]& \\
    (\bifun{G F}) \bifun{L}
        \rar{\Psi'_{(\bifun{G F}) \bifun{L}}}
        \ar[drr, bend right, "\left(\Psi'_{\bifun{G F}}\right)_{\bifun{L}}" swap, ""{coordinate, name=PsiGFL}] &
    ((\bifun{G F}) \bifun{N}) ((\bifun{G F}) \bifun{L})
        \ar[urr, "\Alpha_{\bifun{G F}, \bifun{N}, (\bifun{G F}) \bifun{L}}"]
        \ar[dr, "\Alpha_{(\bifun{G F}) \bifun{N}, \bifun{G F}, \bifun{L}}^{-1}" description]
        \ar[to=PsiGFL, shorten >=0.3em, nfold=3, "\sim" sloped] &&&&
    (\bifun{G F}) ((\bifun{N} (\bifun{G F})) \bifun{L})
        \rar["(\bifun{G F})(\Upsilon'_{\bifun{L}})"{inner sep=1.2ex}] &
    (\bifun{G F}) \bifun{L} \\
    && (((\bifun{G F}) \bifun{N}) (\bifun{G F})) \bifun{L}
        \ar[rr, "(\Alpha_{\bifun{G F}, \bifun{N}, \bifun{G F}})_{\bifun{L}}"{inner sep=1.2ex, name=Assoc}]
        \ar[from=uur, to=Assoc, nfold=3, "\sim" sloped] &&
    ((\bifun{G F}) (\bifun{N} (\bifun{G F}))) \bifun{L}
        \ar[ur, "\Alpha_{\bifun{G F}, \bifun{N} (\bifun{G F}), \bifun{L}}" description]
        \ar[urr, bend right, "\left((\bifun{G F})\Upsilon'\right)_{\bifun{L}}" swap, ""{coordinate, name=GFYL}]
        \ar[from=ur, to=GFYL, shorten >=0.3em, nfold=3, "\sim" sloped]
\end{tikzcd}.\]
All the transformations composing the lowest pseudonatural transformation of the above diagram
share the same right whisker by $\bifun{L}$, which can therefore be factored.
Moreover, since $\Psi'$ and $\Upsilon'$ are respectively the unit and counit of a biadjunction $\bifun{N} \dashv \bifun{G F}$,
the associated right triangle law furnishes the last invertible modification completing $\phi''_2$.

\subsection{The triangle laws}\label{subsec:triangle_laws}

To complete the construction of the biadjunction, it remains to verify the left and right triangle laws.
In the context of biadjunctions, these laws are translated into two invertible modifications
\begin{gather}
\begin{tikzcd}[arrows=Rightarrow, ampersand replacement=\&]
    \&[-1.2em] \bifun{M} (\bifun{G M}) \rar["\sim" sloped, ""{coordinate, name=Assoc}] \&
    (\bifun{M G}) \bifun{M} \ar[dr, "\Sigma_{\bifun{M}}"] \&[-1.2em] \\
    \bifun{M} \ar[ur, "\bifun{M}(\Omega)"] \ar[rrr, "\Id_{\bifun{M}}"{name=IdM}] \& \& \& \bifun{M}
    \ar[from=Assoc, to=IdM, nfold=3, shorten <=0.3em, dashed, "\sim" sloped]
\end{tikzcd}
\intertext{and}
\begin{tikzcd}[arrows=Rightarrow, ampersand replacement=\&]
    \&[-1.2em] (\bifun{G M}) \bifun{G} \rar["\sim" sloped, ""{coordinate, name=Assoc}] \&
    \bifun{G} (\bifun{M G}) \ar[dr, "\bifun{G}(\Sigma)"] \&[-1.2em] \\
    \bifun{G} \ar[ur, "\Omega_{\bifun{G}}"] \ar[rrr, "\Id_{\bifun{G}}"{name=IdG}] \& \& \& \bifun{G}
    \ar[from=Assoc, to=IdG, nfold=3, shorten <=0.3em, dashed, "\sim" sloped]
\end{tikzcd}.
\end{gather}

\paragraph{Left triangle law}
The pseudonatural transformation $\Phi_{\bifun{F N}} \circ \Lambda$ is fully faithful as
the composition of two such transformations.
It reduces the problem to building an invertible morphism in
$\biHom_{\bicat{C}^{\bicat{B}}}(\bifun{M}, (\bifun{F L}) (\bifun{F N}))$
from
\[
    (\Phi_{\bifun{F N}} \circ \Lambda) \circ (\Sigma_{\bifun{M}} \circ \Alpha_{\bifun{M}, \bifun{G}, \bifun{M}}^{-1} \circ \bifun{M}(\Omega))
\]
to $\Phi_{\bifun{F N}} \circ \Lambda$.
Thanks to
\[ \Phi_{\Lambda} \colon (\bifun{F L})(\Lambda) \circ \Phi_{\bifun{M}} \Iisoto \Phi_{\bifun{F N}} \circ \Lambda, \]
it is the same as building an invertible modification
\begin{equation}
    ((\bifun{F L})(\Lambda) \circ \Phi_{\bifun{M}}) \circ (\Sigma_{\bifun{M}} \circ \Alpha_{\bifun{M}, \bifun{G}, \bifun{M}}^{-1} \circ \bifun{M}(\Omega))
    \Iisoto \Phi_{\bifun{F N}} \circ \Lambda.
\end{equation}

By construction of $\Sigma$, there is an invertible modification
\[ ((\bifun{F L})(\Lambda) \circ \Phi_{\bifun{M}}) \circ (\Sigma_{\bifun{M}} \circ \Alpha_{\bifun{M}, \bifun{G}, \bifun{M}}^{-1} \circ \bifun{M}(\Omega)) \Iisoto
(\bifun{F L})(\Lambda) \circ (\Theta' \circ \Lambda_{\bifun{G}})_{\bifun{M}} \circ \Alpha_{\bifun{M}, \bifun{G}, \bifun{M}}^{-1} \circ \bifun{M}(\Omega). \]
The invertible modification $(\Theta' \circ \Lambda_{\bifun{G}})_{\Lambda}$ produces an exchange
\[ (\bifun{F L})(\Lambda) \circ (\Theta' \circ \Lambda_{\bifun{G}})_{\bifun{M}}
\circ \Alpha_{\bifun{M}, \bifun{G}, \bifun{M}}^{-1} \circ \bifun{M}(\Omega) \Iisoto
(\Theta' \circ \Lambda_{\bifun{G}})_{\bifun{F N}} \circ (\bifun{M G})(\Lambda)
\circ \Alpha_{\bifun{M}, \bifun{G}, \bifun{M}}^{-1} \circ \bifun{M}(\Omega) \]
and $(\Alpha^{-1})_{\bifun{M}, \bifun{G}, \Lambda}$ followed by a factorization of the left whiskering by $\bifun{M}$ yields
\[ (\Theta' \circ \Lambda_{\bifun{G}})_{\bifun{F N}} \circ (\bifun{M G})(\Lambda)
\circ \Alpha_{\bifun{M}, \bifun{G}, \bifun{M}}^{-1} \circ \bifun{M}(\Omega) \Iisoto
(\Theta' \circ \Lambda_{\bifun{G}})_{\bifun{F N}} \circ \Alpha_{\bifun{M}, \bifun{G}, \bifun{F N}}^{-1} \circ \bifun{M}(\bifun{G}(\Lambda) \circ \Omega). \]
By construction of $\Omega$, there is invertible modification from the latter pseudonatural transformation to
\[ (\Theta' \circ \Lambda_{\bifun{G}})_{\bifun{F N}} \circ \Alpha_{\bifun{M}, \bifun{G}, \bifun{F N}}^{-1} \circ \bifun{M}(\Psi''). \]
Developing $(\Theta' \circ \Lambda_{\bifun{G}})_{\bifun{F N}}$, rearranging right whiskerings
and exchanging left and right whiskerings produce a compound invertible modification
\[\begin{tikzcd}[arrows=Rightarrow]
    \bifun{M} \rar["\bifun{M}(\Psi')", ""{coordinate, xshift=-2pt, name=MPsi}] \dar[swap]{\Lambda} &
    \bifun{M} (\bifun{G} (\bifun{F N})) \rar["\sim", ""{coordinate, name=Assoc1}]
        \dar{\Lambda_{\bifun{G} (\bifun{F N})}} &
    (\bifun{M G}) (\bifun{F N})
        \dar{(\Lambda_{\bifun{G}})_{\bifun{F N}}}
        \ar[dr, bend left=20, "(\Theta' \circ \Lambda_{\bifun{G}})_{\bifun{F N}}", ""{coordinate, name=ThetaLambda}] \\
    \bifun{F N} \rar["(\bifun{F N})(\Psi')"{name=FNPsi}]
        \ar[from=MPsi, to=FNPsi, nfold=3, shorten <=0.3em, "\sim" sloped, "(\Lambda_{\Psi'})^{-1}"] &
    (\bifun{F N}) (\bifun{G} (\bifun{F N})) \rar["\sim"{name=Assoc2}]
        \ar[from=Assoc1, to=Assoc2, nfold=3, shorten <=0.3em, "\sim" sloped] &
    ((\bifun{F N}) \bifun{G}) (\bifun{F N}) \rar["(\Theta')_{\bifun{F N}}"{name=Theta}]
        \ar[from=ThetaLambda, to=Theta, bend right=20, nfold=3, shorten <=0.3em, "\sim" sloped] &
    (\bifun{F L}) (\bifun{F N})
\end{tikzcd}.\]
Moreover, it holds
\[ (\mu * \Lambda)^{-1} \colon (\Theta')_{\bifun{F N}} \circ \Alpha_{\bifun{F N}, \bifun{G}, \bifun{F N}}^{-1} \circ (\bifun{F N})(\Psi') \circ \Lambda = \Xi \circ \Lambda \Iisoto \Phi_{\bifun{F N}} \circ \Lambda, \]
which completes our construction.

\paragraph{Right triangle law}
The construction of the right triangle law is simpler,
because it only exploits the full faithfulness of $\bifun{G}(\Phi)$.
It reduces the problem to find an invertible modification from
\begin{equation}\label{eq:biadj_triangle_r_lhs}
    \bifun{G}(\Phi) \circ (\bifun{G}(\Sigma) \circ \Alpha_{\bifun{G}, \bifun{M}, \bifun{G}} \circ \Omega_{\bifun{G}})
\end{equation}
to $\bifun{G}(\Phi)$.
We have
\[\begin{tikzcd}[arrows=Rightarrow, column sep=normal, row sep=large]
    &&& \bifun{G} \ar[dr, "\bifun{G}(\Phi)"] & \\
    & (\bifun{G M}) \bifun{G}
        \rar["\Alpha_{\bifun{G}, \bifun{M}, \bifun{G}}"]
        \ar[dr, "(\bifun{G} \Lambda)_{\bifun{G}}"{swap, inner sep=-0.2ex}] &
    \bifun{G}(\bifun{M G})
        \ar[dr, "\bifun{G}(\Lambda_{\bifun{G}})" swap]
        \ar[ur, "\bifun{G}(\Sigma)"]
        \ar[rr, "\bifun{G}(\Phi \circ \Sigma)"{description, xshift=3.5pt, name=GPhiSigma}]
        \ar[from=ur, to=GPhiSigma, nfold=3, "\sim" sloped]
        \dar[nfold=3, "\sim" sloped, "\Alpha_{\bifun{G}, \Lambda, \bifun{G}}"{swap, near start}] &&
    \bifun{G} (\bifun{F L}) \\
    \bifun{G} \ar[ur, "\Omega_{\bifun{G}}"]
        \ar[rr, "(\bifun{G}(\Lambda) \circ \Omega)_{\bifun{G}}"{description, name=GLOG}]
        \ar[rr, bend right, end anchor=south west, "\Psi''_{\bifun{G}}" swap, ""{coordinate, name=PsiG}]
        \ar[from=ur, to=GLOG, nfold=3, "\sim" sloped]
        \ar[from=GLOG, to=PsiG, nfold=3, shorten >=0.3em, "\sim"{sloped, near start}, "\omega_{\bifun{G}}"{pos=0.4, inner sep=1ex}] &&
    (\bifun{G} (\bifun{F N})) \bifun{G} \rar["\sim", "\Alpha_{\bifun{G}, \bifun{F N}, \bifun{G}}"{swap, inner sep=1.ex}] &
    \bifun{G} ((\bifun{F N}) \bifun{G})
        \ar[ur, "\bifun{G}(\Theta')" swap]
        \ar[from=GPhiSigma, nfold=3, "\sim" sloped, "\bifun{G}(\sigma)^{-1}"{swap, near start}] &
\end{tikzcd}.\]
So far has been built an invertible modification from \eqref{eq:biadj_triangle_r_lhs} to
\[ \bifun{G}(\Theta') \circ \Alpha_{\bifun{G}, \bifun{F N}, \bifun{G}} \circ \Psi''_{\bifun{G}}. \]
Now, $\phi''_2$ can be directly applied to conclude.

\section{Case of a fully faithful pseudofunctor}\label{sec:ff_case}

\subsection{A first observation}

When $\Theta$ is an equivalence (within the bicategory $\bicat{A}^{\bicat{B}}$),
so is $\Theta'$ and its quasi-inverse $\Theta'^{-1}$ allows to build
a modification $\nu \colon \Phi_{\bifun{F L}} \Rrightarrow \bifun{F L} (\Phi)$
directly from the data of $\mu \colon \Phi_{\bifun{F N}} \Rrightarrow \Xi$ as
\begin{equation}\label{eq:nu_from_mu}
    \Phi_{\bifun{F L}} \Iisoto
    (\Phi_{\bifun{F L}} \circ \Theta') \circ \Theta'^{-1}
        \xRightarrow{\left( \phi_2 \circ (\Theta'' * \mu_{\bifun{G}}) \circ \phi_1^{-1} \right) * \Theta'^{-1}}
    (\bifun{F L} (\Phi) \circ \Theta') \circ \Theta'^{-1} \Iisoto
    \bifun{F L} (\Phi).
\end{equation}
Equation \eqref{eq:comp_mu_nu} is clearly satisfied by this $\nu$.
In that setting, conditions of Theorem~\ref{thm:biadj_triangle} can be expressed only in term of $\mu$.

According to the following proposition, this notably occurs when $\bifun{G}$ is fully faithful.
We will see later that if the conditions of Theorem~\ref{thm:biadj_triangle} are fulfilled, this reciprocally implies that $\bifun{G}$ is fully faithful.

\begin{proposition}
If $\bifun{G}$ is fully faithful, then $\Theta$ is a pseudonatural equivalence.
\end{proposition}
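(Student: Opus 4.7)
The plan is to show that every component $\Theta_B \colon \bifun{N G}(B) \to \bifun{L}(B)$ is an equivalence in $\bicat{A}$, from which it follows by standard bicategorical arguments that $\Theta$ is a pseudonatural equivalence (a quasi-inverse being assembled pointwise using the same universal property that produced $\Theta$).

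To prove each $\Theta_B$ is an equivalence, I would invoke the bicategorical Yoneda principle: it suffices to show that for every object $A$ of $\bicat{A}$, the functor
\[ {-} \circ \Theta_B \colon \biHom_{\bicat{A}}(\bifun{L}(B), A) \longrightarrow \biHom_{\bicat{A}}(\bifun{N G}(B), A) \]
is an equivalence of categories. I would exhibit this functor (up to natural isomorphism) as the composite of three known equivalences:
\[ \biHom_{\bicat{A}}(\bifun{L}(B), A) \isoto \biHom_{\bicat{B}}(B, \bifun{F}(A)) \isoto \biHom_{\bicat{C}}(\bifun{G}(B), \bifun{G F}(A)) \isoto \biHom_{\bicat{A}}(\bifun{N G}(B), A), \]
where the first comes from the biadjunction $\bifun{L} \dashv \bifun{F}$ (sending $f$ to $\bifun{F}(f) \circ \Phi_B$), the second from the hypothesis that $\bifun{G}$ is fully faithful, and the third from the biadjunction $\bifun{N} \dashv \bifun{G F}$ (sending $h$ to $\Upsilon'_A \circ \bifun{N}(h)$).

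Tracing $f \colon \bifun{L}(B) \to A$ through the three equivalences yields $\Upsilon'_A \circ \bifun{N}\bigl(\bifun{G}(\bifun{F}(f) \circ \Phi_B)\bigr)$. On the other hand, unfolding $\Theta_B = \Upsilon'_{\bifun{L}(B)} \circ \bifun{N}(\bifun{G}(\Phi_B))$ (ignoring the canonical associators) and using the pseudonaturality of $\Upsilon'$ at $f$, one gets a canonical 2-isomorphism
\[ f \circ \Theta_B \cong \Upsilon'_A \circ \bifun{N}(\bifun{G F}(f)) \circ \bifun{N}(\bifun{G}(\Phi_B)) \cong \Upsilon'_A \circ \bifun{N}\bigl(\bifun{G}(\bifun{F}(f) \circ \Phi_B)\bigr), \]
natural in $f$. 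Hence $- \circ \Theta_B$ is isomorphic to the advertised composite of three equivalences, and is therefore itself an equivalence.

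The routine step is the chain of three equivalences; the only part worth writing carefully is the naturality computation that identifies $- \circ \Theta_B$ with this chain, which is where pseudonaturality of $\Upsilon'$ and the definitions of $\Phi'$, $\Upsilon'$ must be combined with the appropriate associators. The remaining passage from pointwise equivalence to pseudonatural equivalence is standard, using that pseudofunctor-valued equivalences admit quasi-inverses with coherent pseudonatural structure.
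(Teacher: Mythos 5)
Your proof is correct, but it follows a genuinely different route from the paper. The paper constructs a quasi-inverse of $\Theta$ explicitly and globally: using the full faithfulness of $\bifun{G}$ (read, as you implicitly do, as ``the local functors of $\bifun{G}$ are equivalences'') it lifts the pseudonatural transformation $\bifun{G} \Rightarrow \bifun{G}((\bifun{F N})\bifun{G})$ coming from $\Psi''_{\bifun{G}}$ to a transformation $\Tau \colon \ID_{\bicat{B}} \Rightarrow \bifun{F}(\bifun{N G})$, sets $\Theta^{-1} = \Chi_{\bifun{N G}} \circ \bifun{L}\Tau$ (with $\Chi$ the counit of $\bifun{L} \dashv \bifun{F}$), and then builds the two invertible modifications $\Theta^{-1}\circ\Theta \Iisoto \Id$ and $\Theta\circ\Theta^{-1} \Iisoto \Id$ by hand, using the triangle laws of both biadjunctions, pentagonators and whiskering exchanges. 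You instead reduce to showing each component $\Theta_B$ is an equivalence via the representability criterion (${-}\circ\Theta_B$ an equivalence on all hom-categories), exhibiting it as the composite of the two hom-equivalences of the biadjunctions $\bifun{L} \dashv \bifun{F}$ and $\bifun{N} \dashv \bifun{G F}$ with the local equivalence of $\bifun{G}$; the identification of ${-}\circ\Theta_B$ with that composite via pseudonaturality of $\Upsilon'$ and the compositors is the same computation, done representably, that the paper performs at the level of transformations. Your argument is shorter and conceptually cleaner, but it outsources two nontrivial standard facts the paper never invokes: the Yoneda-type criterion, and the lemma that a pseudonatural transformation with componentwise equivalences is an equivalence in $\bicat{B}^{\bicat{A}}$ (which needs adjoint equivalences and mates to get the coherent quasi-inverse). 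The paper's explicit construction avoids both, which matters for its Coq formalization and also hands it the concrete $\Theta^{-1}$, $\Tau$ and $\sigma$-type data reused in the surrounding results; your route proves the proposition as stated but would still require unwinding to recover those explicit witnesses.
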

\begin{proof}
Throughout this proof, $\Chi$ will be the counit of $\bifun{L} \dashv \bifun{F}$.
One needs first to exhibit an hypothetical quasi-inverse of $\Theta$.
There is a pseudonatural transformation
\begin{equation}\label{eq:GT}
\bifun{G} \xRightarrow{\Psi''_{\bifun{G}}} (\bifun{G} (\bifun{F N})) \bifun{G}
\Isoto \bifun{G} ((\bifun{F N}) \bifun{G})
\end{equation}
hence a pseudonatural transformation
\[ \Tau \colon \ID_{\bicat{B}} \Rightarrow \bifun{F} (\bifun{N G}). \]
Set
\[ \Theta^{-1} \colon \bifun{L} \xRightarrow{\bifun{L} \Tau} \bifun{L} (\bifun{F} (\bifun{N G})) \Isoto
(\bifun{L F}) (\bifun{N G}) \xRightarrow{\Chi_{\bifun{N G}}} \bifun{N G}. \]

Now let's build an invertible modification from $\Theta^{-1} \circ \Theta$ to $\Id_{\bifun{N G}}$.
An exchange of left and right whiskerings and two rearrangements provide an invertible modification
\[\begin{tikzcd}[arrows=Rightarrow]
    \bifun{L} \rar["\bifun{L} \Tau", ""{coordinate, name=LT, xshift=-.4ex}] &
    \bifun{L} (\bifun{F} (\bifun{N G})) \rar["\sim", ""{coordinate, name=Assoc1}] &
    (\bifun{L F}) (\bifun{N G}) \rar{\Chi_{\bifun{N G}}} &
    \bifun{N G} \\
    \bifun{N G} \uar{\Theta} \rar["\bifun{N G}(\Tau)"{name=NGT}]
\ar[from=LT, to=NGT, nfold=3, shorten <=0.3em, "\sim" sloped, "\Theta_{\Tau}"] &
    (\bifun{N G}) (\bifun{F} (\bifun{N G})) \uar[swap]{\Theta_{\bifun{F} (\bifun{N G})}} \rar["\sim"{name=Assoc2}] \ar[from=Assoc1, to=Assoc2, nfold=3, shorten <=0.3em, "\sim" sloped] &
    ((\bifun{N G}) \bifun{F}) (\bifun{N G}) \uar{(\Theta_{\bifun{F}})_{\bifun{N G}}} \ar[ur, bend right=20, "(\Chi \circ \Theta_{\bifun{F}})_{\bifun{N G}}" swap, ""{coordinate, name=ChiThetaFNG}]
\ar[from=u, to=ChiThetaFNG, nfold=3, shorten >=0.3em, "\sim" sloped]
\end{tikzcd}.\]
The expansion of $\Theta$ in term of $\Phi$ and $\Upsilon'$ followed by exchanges and a pentagonator produces an invertible modification
from $\Chi \circ \Theta_{\bifun{F}}$ to
\[ (\bifun{N G}) \bifun{F} \xRightarrow{(\bifun{N G})(\Alpha_{\bifun{F}, \bifun{L}, \bifun{F}} \circ \Phi_{\bifun{F}})}
(\bifun{N G}) (\bifun{F} (\bifun{L F})) \xRightarrow{(\bifun{N G}) (\bifun{F} \Chi)}
(\bifun{N G}) \bifun{F} \xRightarrow{\Upsilon' \circ \Alpha_{\bifun{N}, \bifun{G}, \bifun{F}}} \ID_{\bicat{A}}. \]
The right triangle law of the biadjunction $\bifun{L} \dashv \bifun{F}$ maps isomorphically
$\bifun{F} \Chi \circ \Alpha_{\bifun{F}, \bifun{L}, \bifun{F}} \circ \Phi_{\bifun{F}}$
to $\Id_{(\bifun{N G}) \bifun{F}}$.
Combining the preceding invertible modifications,
an invertible modification from $\Theta^{-1} \circ \Theta$ to
\begin{equation}\label{eq:UpsilonNG_NGT}
\bifun{N G} \xRightarrow{\bifun{N G}(\Tau)} (\bifun{N G}) (\bifun{F} (\bifun{N G})) \Isoto
((\bifun{N G}) \bifun{F}) (\bifun{N G}) \xRightarrow{(\Upsilon' \circ \Alpha_{\bifun{N}, \bifun{G}, \bifun{F}})_{\bifun{N G}}} \bifun{N G}
\end{equation}
has been built so far.
By construction, $\bifun{N G}(\Tau)$ is isomorphic to
\[ \bifun{N G} \xRightarrow{\bifun{N} (\Psi'_{\bifun{G}})}
\bifun{N} (((\bifun{G F}) \bifun{N}) \bifun{G}) \Isoto
\bifun{N} (\bifun{G} (\bifun{F} (\bifun{N G}))) \Isoto
(\bifun{N G}) (\bifun{F} (\bifun{N G})). \]
Thus, a pentagonator and exchanges and rearrangements of whiskering assembles in
an invertible modification from \eqref{eq:UpsilonNG_NGT} to
\[ \bifun{N G} \xRightarrow{(\bifun{N} \Psi')_{\bifun{G}}}
(\bifun{N} ((\bifun{G F}) \bifun{N})) \bifun{G} \Isoto
((\bifun{N} (\bifun{G F})) \bifun{N}) \bifun{G} \xRightarrow{\left(\Upsilon'_{\bifun{N}}\right)_{\bifun{G}}}
\bifun{N G}. \]
Since $\Psi'$ and $\Upsilon'$ are respectively the unit and counit of a biadjunction $\bifun{N} \dashv \bifun{G F}$,
its left triangle law maps isomorphically this last pseudonatural transformation to $\Id_{\bifun{N G}}$,
completing our construction.

To finish the proof, it only remains to build an invertible modification from $\Theta \circ \Theta^{-1}$ to $\Id_{\bifun{G N}}$.
First, we have
\[\begin{tikzcd}[arrows=Rightarrow]
    \bifun{L} \rar["\bifun{L} \Tau", ""{coordinate, name=LT, xshift=-.4ex}]
        \ar[dr, bend right, "\bifun{L} (\bifun{F} \Theta \circ \Tau)" swap, ""{coordinate, name=LFThetaT}] &
    \bifun{L} (\bifun{F} (\bifun{N G}))
        \rar["\sim", ""{coordinate, name=Assoc1}] \dar{\bifun{L} (\bifun{F} \Theta)}
        \ar[to= LFThetaT, nfold=3, "\sim" sloped, shorten >=0.3em] &
    (\bifun{L F}) (\bifun{N G})
        \rar["\Chi_{\bifun{N G}}", ""{coordinate, name=XNG, xshift=-.4ex}]
        \dar{(\bifun{L F}) \Theta} &
    \bifun{N G} \dar{\Theta} \\
    & \bifun{L} (\bifun{F L}) \rar["\sim"{name=Assoc2}]
        \ar[from=Assoc1, to=Assoc2, nfold=3, shorten <=0.3em, "\sim" sloped] &
    (\bifun{L F}) \bifun{L} \rar["\Chi_{\bifun{L}}"{name=XL}]
        \ar[from=XNG, to=XL, nfold=3, shorten <=0.3em, "\sim" sloped, "\Chi_{\Theta}"] &
    \bifun{L}
\end{tikzcd}.\]
If we are able to build an invertible modification $\bifun{F} \Theta \circ \Tau \Iisoto \Phi$,
then we are done thanks to the left triangle law of the biadjunction $\bifun{L} \dashv \bifun{F}$.

By full faithfulness of $\bifun{G}$, that is equivalent to build an invertible modification $\bifun{G}(\bifun{F} \Theta \circ \Tau) \Iisoto \bifun{G}(\Phi)$.
Replacing $\bifun{G}(\Tau)$ by \eqref{eq:GT} and rearranging the left whiskers, one gets
\[ \bifun{G}(\bifun{F} \Theta \circ \Tau) \Iisoto \Alpha_{\bifun{G}, \bifun{F}, \bifun{L}} \circ (\bifun{G F}) \Theta \circ \Alpha_{\bifun{G F}, \bifun{N}, \bifun{G}} \circ \Psi'_{\bifun{G}}. \]
Then developing $\Theta$ and performing some exchanges, one obtains
\[\begin{tikzcd}[arrows=Rightarrow]
    \bifun{G} \rar["\Psi'_{\bifun{G}}", ""{coordinate, xshift=-.4ex, name=PsiG}] \dar[swap]{\Phi'} &
    ((\bifun{G F}) \bifun{N}) \bifun{G} \rar["\Alpha_{\bifun{G F}, \bifun{N}, \bifun{G}}", ""{coordinate, name=Assoc1}]
\dar[swap]{((\bifun{G F}) \bifun{N}) \Phi'} &
    (\bifun{G F}) (\bifun{N G}) \dar[swap]{(\bifun{G F}) (\bifun{N} \Phi')}
        \ar[ddd, bend left=67, "(\bifun{G F}) \Theta", ""{coordinate, name=GFTheta}] \\
    (\bifun{G F}) \bifun{L} \rar["\Psi'_{(\bifun{G F}) \bifun{L}}"{name=PsiGFL1}, ""{coordinate, name=PsiGFL2}]
\ar[from=PsiG, to=PsiGFL1, nfold=3, shorten <=0.3em, "\sim" sloped, "\Psi'_{\Phi'}"]
\dar[swap]{\left(\Psi'_{(\bifun{G F})}\right)_{\bifun{L}}} &
    ((\bifun{G F}) \bifun{N}) ((\bifun{G F}) \bifun{L}) \rar["\Alpha_{\bifun{G F}, \bifun{N}, (\bifun{G F}) \bifun{L}}"{name=Assoc2, inner sep=1ex}] \ar[from=Assoc1, to=Assoc2, nfold=3, shorten <=0.3em, shorten >=-.4ex, "\sim" sloped]
\ar[dl, "\Alpha_{(\bifun{G F}) \bifun{N}, \bifun{G F}, \bifun{L}}^{-1}"{near end}, ""{coordinate, name=Assoc3}]
\ar[from=PsiGFL2, to=Assoc3, nfold=3, shorten=0.3em, bend right, "\sim" sloped]
\ar[dd, nfold=3, "\sim" sloped] &
    (\bifun{G F}) (\bifun{N} ((\bifun{G F}) \bifun{L}))
\dar["(\bifun{G F})(\Alpha_{\bifun{N}, \bifun{G F}, \bifun{L}}^{-1})" swap, ""{coordinate, name=GFAssoc}]
\ar[from=GFTheta, to= GFAssoc, nfold=3, shorten=0.3em, "\sim" sloped] \\
    (((\bifun{G F}) \bifun{N}) (\bifun{G F})) \bifun{L} \ar[dr, "(\Alpha_{\bifun{G F}, \bifun{N}, \bifun{G F}})_{\bifun{L}}" swap] &&
    (\bifun{G F}) ((\bifun{N} (\bifun{G F})) \bifun{L}) \dar["(\bifun{G F})(\Upsilon'_{\bifun{L}})" swap] \\
& ((\bifun{G F}) (\bifun{N} (\bifun{G F}))) \bifun{L} \ar[ur, "\Alpha_{\bifun{G F},(\bifun{N} (\bifun{G F}), \bifun{L}}"{near end}, ""{coordinate, name=Assoc4}] \rar["((\bifun{G F})\Upsilon')_{\bifun{L}}" swap, ""{coordinate, name=GFUpsilon}]
\ar[from=Assoc4, to=GFUpsilon, nfold=3, shorten=0.3em, bend left, "\sim" sloped] &
(\bifun{G F}) \bifun{L}
\end{tikzcd}\]
and the right triangle law of the biadjunction $\bifun{N} \dashv \bifun{G F}$ concludes.
\end{proof}

When $\Theta$ is an equivalence, condition~\ref{it:Phi_desc_type} can be reformulated as follows.

\begin{proposition}
Keep the same notations as before and assume that $\Theta$ is an equivalence
and that $\mu$ admits an inverter $(\bifun{M}, \Lambda)$.
Then the following assertions are equivalent:
\begin{enumerate}
\item\label{it:inverter_nu_from_mu}
    $(\ID_{\bicat{B}}, \Phi)$ is an inverter of the $\nu$ defined in \eqref{eq:nu_from_mu};
\item\label{it:inverter_muG}
    $(\ID_{\bicat{B}}, \Theta'^{-1} \circ \Phi)$ is an inverter of $\mu_{\bifun{G}}$;
\item\label{it:Sigma_equivalence}
    there is a pseudonatural equivalence $\Sigma \colon \bifun{M G} \Isoto \ID_{\bicat{B}}$ and
    an invertible modification $\sigma$ filling
\[\begin{tikzcd}[arrows=Rightarrow]
    \bifun{M G} \rar{\Lambda_{\bifun{G}}} \dar[dashed, "\Sigma" swap, "\sim" sloped] &
    (\bifun{F N}) \bifun{G} \dar["\Theta'", ""{coordinate, near start, name=Theta}] \\
    \ID_{\bicat{B}} \rar["\Phi"{name=Phi}] & \bifun{F L}
    \ar[from=Theta, to=Phi, bend right, dashed, shorten <=0.4em, nfold=3, "\sim" sloped, "\sigma"]
\end{tikzcd}.\]
\end{enumerate}
\end{proposition}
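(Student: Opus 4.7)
Plan.

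My plan is to prove the cycle $(\ref{it:inverter_nu_from_mu}) \Leftrightarrow (\ref{it:inverter_muG}) \Leftrightarrow (\ref{it:Sigma_equivalence})$, relying on the fact that $\Theta'$ inherits from $\Theta$ the property of being a pseudonatural equivalence, since pre-whiskering by the pseudofunctor $\bifun{F}$ preserves equivalences. As a consequence, for every pseudofunctor $\bifun{H} \colon \bicat{B} \to \bicat{B}$, pre-composition by the quasi-inverse $\Theta'^{-1}$ yields an equivalence of categories between $\biHom_{\bicat{B}^{\bicat{B}}}(\bifun{H}, \bifun{F L})$ and $\biHom_{\bicat{B}^{\bicat{B}}}(\bifun{H}, (\bifun{F N}) \bifun{G})$.

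The equivalence $(\ref{it:inverter_nu_from_mu}) \Leftrightarrow (\ref{it:inverter_muG})$ I obtain by direct transport along $\Theta'$. Unpacking the definition \eqref{eq:nu_from_mu}, the modification $\nu * \Theta'$ is canonically isomorphic to $\phi_2 \circ (\Theta'' * \mu_{\bifun{G}}) \circ \phi_1^{-1}$, so for any $\xi \colon \bifun{H} \Rightarrow \bifun{F L}$ the modification $\nu * \xi$ is invertible if and only if $\mu_{\bifun{G}} * (\Theta'^{-1} \circ \xi)$ is invertible (using the invertibility of $\phi_1$, $\phi_2$ and the unit/counit of $\Theta' \dashv\vdash \Theta'^{-1}$). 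The resulting correspondence between inverter cones of $\nu$ and of $\mu_{\bifun{G}}$ at every vertex sends $\Phi$ to $\Theta'^{-1} \circ \Phi$, so universality transfers accordingly.

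For $(\ref{it:inverter_muG}) \Rightarrow (\ref{it:Sigma_equivalence})$, I first observe that $(\bifun{M G}, \Lambda_{\bifun{G}})$ is an inverter cone of $\mu_{\bifun{G}}$ in $\bicat{B}^{\bicat{B}}$: the pseudofunctoriality of pre-whiskering by $\bifun{G}$ (the modifications $\delta_1$, $\delta_2$ of Subsection~\ref{subsec:counit}) makes $\mu_{\bifun{G}} * \Lambda_{\bifun{G}}$ isomorphic to $(\mu * \Lambda)_{\bifun{G}}$, which is invertible by construction of $\Lambda$. The universal property of $(\ID_{\bicat{B}}, \Theta'^{-1} \circ \Phi)$ applied to this cone produces $\Sigma \colon \bifun{M G} \Rightarrow \ID_{\bicat{B}}$ and an invertible modification $\Theta'^{-1} \circ \Phi \circ \Sigma \Iisoto \Lambda_{\bifun{G}}$, which post-composition by $\Theta'$ turns into the required $\sigma$. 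Conversely, $(\ref{it:Sigma_equivalence}) \Rightarrow (\ref{it:inverter_muG})$ uses $\sigma$ and the invertibility of $\mu_{\bifun{G}} * \Lambda_{\bifun{G}}$ to obtain that of $\mu_{\bifun{G}} * (\Theta'^{-1} \circ \Phi) * \Sigma$, whence that of $\mu_{\bifun{G}} * (\Theta'^{-1} \circ \Phi)$ by cancelling the equivalence $\Sigma$; the universal property is transported along $\Sigma$ in the same way.

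The main obstacle is proving that $\Sigma$ is a pseudonatural equivalence, not merely a pseudonatural transformation, in $(\ref{it:inverter_muG}) \Rightarrow (\ref{it:Sigma_equivalence})$. I plan to reduce this to the stronger statement that $(\bifun{M G}, \Lambda_{\bifun{G}})$ is itself a universal inverter cone of $\mu_{\bifun{G}}$ in $\bicat{B}^{\bicat{B}}$, so that $\Sigma$ relates two universal cones and is therefore automatically an equivalence. Establishing this universality amounts to lifting the inverter property of $(\bifun{M}, \Lambda)$ from $\bicat{B}^{\bicat{C}}$ to $\bicat{B}^{\bicat{B}}$ along the pre-whiskering by $\bifun{G}$; the equivalence $\Theta'$ provides the bridge, translating a cone $\eta \colon \bifun{H} \Rightarrow (\bifun{F N}) \bifun{G}$ of $\mu_{\bifun{G}}$ into a cone of $\nu$, while the full faithfulness of $\Lambda$ (inherent in its being an inverter cone) and its preservation under pre-whiskering by $\bifun{G}$ supply the lifting of 2-cells necessary to complete the argument.
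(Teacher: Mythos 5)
Your handling of the equivalence between assertions (1) and (2) is correct, and it is essentially the same transport-along-$\Theta'$ argument that the paper encapsulates in the first part of the lemma stated right after the proposition. Your construction of $\Sigma$ and $\sigma$ from (2) is also fine: $\mu_{\bifun{G}} * \Lambda_{\bifun{G}}$ is invertible because it is isomorphic to $(\mu * \Lambda)_{\bifun{G}}$, so $(\bifun{M G}, \Lambda_{\bifun{G}})$ is an inverter cone and the universal property of $(\ID_{\bicat{B}}, \Theta'^{-1} \circ \Phi)$ applies. The genuine gap is exactly the step you flag as the main obstacle, and your plan for it does not work. You propose to show that $(\bifun{M G}, \Lambda_{\bifun{G}})$ is a \emph{universal} inverter cone of $\mu_{\bifun{G}}$ in $\bicat{B}^{\bicat{B}}$ by lifting the universality of $(\bifun{M}, \Lambda)$ along pre-whiskering by $\bifun{G}$. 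But pre-whiskering is a pseudofunctor $\bicat{B}^{\bicat{C}} \to \bicat{B}^{\bicat{B}}$ with no reason to preserve inverters: the inverter $(\bifun{M}, \Lambda)$ need not be pointwise (the paper explicitly warns of this), and its full faithfulness in $\bicat{B}^{\bicat{C}}$ does not transfer to $\Lambda_{\bifun{G}}$ in $\bicat{B}^{\bicat{B}}$. Concretely, given an arbitrary cone $\eta \colon \bifun{H} \Rightarrow (\bifun{F N}) \bifun{G}$ inverting $\mu_{\bifun{G}}$, your bridge via $\Theta'$ and the universality of $(\ID_{\bicat{B}}, \Phi)$ only produces a 1-cell $\bifun{H} \Rightarrow \ID_{\bicat{B}}$; turning it into the required 1-cell $\bifun{H} \Rightarrow \bifun{M G}$ demands precisely a quasi-inverse of $\Sigma$, which is what you are trying to build, and the universal property of $(\bifun{M}, \Lambda)$ cannot be invoked because $\bifun{H}$ is not of the form $\bifun{K} \bifun{G}$ for a pseudofunctor $\bifun{K} \colon \bicat{C} \to \bicat{B}$. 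Full faithfulness only lifts 2-cells once the 1-cell exists, so it cannot close this circle; the argument as sketched is circular. The same unproven universality of $(\bifun{M G}, \Lambda_{\bifun{G}})$ is also silently used in your direction (3) $\Rightarrow$ (2), where transporting the universal property along $\Sigma$ merely identifies the universality of $(\ID_{\bicat{B}}, \Theta'^{-1} \circ \Phi)$ with that of $(\bifun{M G}, \Lambda_{\bifun{G}})$ rather than establishing either.

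For comparison, the paper does not argue via preservation of the inverter by pre-whiskering at all: it deduces the whole proposition from a self-contained two-sided lemma about a square whose vertical 1-cells are equivalences, whose first assertion transports universality across the square along a given equivalence of vertices, and whose second assertion manufactures the equivalence between the vertices of two universal cones by playing the two universal properties against each other (producing comparison 1-cells in both directions and using universality once more to invert their composites). If you wish to keep your route, you must either give an actual proof that $(\bifun{M G}, \Lambda_{\bifun{G}})$ is an inverter of $\mu_{\bifun{G}}$ under the stated hypotheses, or construct a quasi-inverse of $\Sigma$ by some independent means; as written, that step is missing.
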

\begin{proof}
Everything is a consequence of the following more general result.
\end{proof}

\begin{lemma}
Let $\bicat{C}$ be any bicategory in which there is a diagram
\[\begin{tikzcd}[row sep=large]
    X_0 \rar{i}
    &
    X_1
        \rar[bend left, start anchor=north east, end anchor=north west, "f_0"{description, near start, name=f0}]
        \dar["h_1"{swap, name=h1}] &[+1em]
    X_2 \dar{h_2} \\
    Y_0 \rar["j" name=j]
    &
    Y_1
        \rar[bend left, start anchor=north east, end anchor=north west, "g_0"{description, near start, name=g0}]
        \ar[from=f0, to=g0, Rightarrow, shorten=0.2em, "\sim"{near start, sloped}]
        \ar[from=u, to=ur, crossing over, bend right, start anchor=south east, end anchor=south west, "f_1"{description, near end, name=f1}]
        \ar[from=f0, to=f1, Rightarrow, shorten=0.2em, "\alpha"]
        \rar[bend right, start anchor=south east, end anchor=south west, "g_1"{description, near end, name=g1}]
        \ar[from=g0, to=g1, Rightarrow, shorten=0.2em, "\beta" swap] &
    Y_2
        \ar[from=f1, to=g1, crossing over, Rightarrow, shorten=0.2em, "\sim"{swap, sloped}]
\end{tikzcd}\]
where $h_1$ and $h_2$ are equivalences and
\[ \eta_2 \circ (h_2 * \alpha) = (\beta * h_1) \circ \eta_1. \]
\begin{itemize}
\item If there exists an equivalence $h_0 \colon X_0 \isoto Y_0$ and
a 2-isomorphism $h_1 \circ i \Isoto j \circ h_0$, then
$(X_0, i)$ is an inverter of $\alpha$ if and only if $(Y_0, j)$ is an inverter of $\beta$.
\item If $(X_0, i)$ and $(Y_0, j)$ are both inverters, then
there exists an equivalence $h_0 \colon X_0 \isoto Y_0$
(unique up to a compatible 2-isomorphism) and
a 2-isomorphism $h_1 \circ i \Isoto j \circ h_0$.
\end{itemize}
\end{lemma}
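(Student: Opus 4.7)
The plan is to reduce both assertions to a single transport statement: for every object $Z$ of $\bicat{C}$, the post-composition functor by $h_1$
\[ h_1 \circ ({-}) \colon \biHom_{\bicat{C}}(Z, X_1) \longrightarrow \biHom_{\bicat{C}}(Z, Y_1) \]
restricts to an equivalence between the full subcategories $\InvCone(Z; \alpha)$ and $\InvCone(Z; \beta)$.

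To prove this key claim, I would reuse the computation of Proposition~\ref{prop:inverter_morphism}: the coherence relation $\eta_2 \circ (h_2 * \alpha) = (\beta * h_1) \circ \eta_1$, combined with the invertibility of $\eta_1$, $\eta_2$ and the fact that whiskering by the equivalence $h_2$ preserves and reflects invertibility, shows that for any $k \colon Z \to X_1$ the 2-morphism $\alpha * k$ is invertible if and only if $\beta * (h_1 \circ k)$ is. This provides both well-definedness of the restriction and, combined with the essential surjectivity of $h_1 \circ ({-})$ on ambient hom-categories, its essential surjectivity. Full faithfulness of the restriction is inherited from $h_1 \circ ({-})$ being fully faithful on the ambient hom-categories, which holds because $h_1$ is an equivalence.

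For the first bullet, the given 2-isomorphism $h_1 \circ i \Isoto j \circ h_0$ makes the square
\[\begin{tikzcd}
    \biHom_{\bicat{C}}(Z, X_0) \rar["{-} \circ i"] \dar["h_0 \circ {-}" swap, "\sim" sloped] &
    \InvCone(Z; \alpha) \dar["h_1 \circ {-}", "\sim" sloped] \\
    \biHom_{\bicat{C}}(Z, Y_0) \rar["{-} \circ j"] &
    \InvCone(Z; \beta)
\end{tikzcd}\]
commute up to a natural isomorphism, for every $Z$. The vertical arrows being equivalences (the left one because $h_0$ is, the right one by the transport step), the top horizontal arrow is an equivalence if and only if the bottom one is, which is the required biconditional. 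For the second bullet, the universal property of the inverter $(Y_0, j)$ applied to the inverter cone $(X_0, h_1 \circ i)$ of $\beta$ (which is indeed one by the transport step) produces $h_0 \colon X_0 \to Y_0$ together with a 2-isomorphism $h_1 \circ i \Isoto j \circ h_0$, unique up to compatible 2-isomorphism. Symmetrically, the universal property of $(X_0, i)$ applied to the transport of $(Y_0, j)$ along a quasi-inverse of $h_1$ yields $h_0' \colon Y_0 \to X_0$ with an isomorphism making $h_1 \circ (i \circ h_0') \cong j$. Rearranging gives $j \circ (h_0 \circ h_0') \cong j \cong j \circ \Id_{Y_0}$, and the full faithfulness part of the universal property of $(Y_0, j)$ produces an invertible 2-cell $h_0 \circ h_0' \Isoto \Id_{Y_0}$. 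An analogous argument with $(X_0, i)$ yields $h_0' \circ h_0 \Isoto \Id_{X_0}$, so $h_0$ is an equivalence.

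The main obstacle is the transport claim of the second paragraph: verifying carefully, from the coherence relation between $\alpha$, $\beta$, $\eta_1$ and $\eta_2$, that post-composition by $h_1$ preserves and reflects inverter cones. Once that is in place, everything else is a routine application of the universal properties of the two inverters.
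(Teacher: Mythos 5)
Your proposal is correct and follows essentially the same route as the paper: you establish that post-composition by $h_1$ carries inverter cones of $\alpha$ to inverter cones of $\beta$ (and reflects them, using the coherence relation and conservativity of whiskering by the equivalence $h_2$), deduce the first bullet from the square of functors commuting up to natural isomorphism with vertical equivalences, and obtain the second bullet from the universal properties in both directions plus the cancellation argument $j \circ (h_0 \circ h_0') \cong j$. Only a cosmetic remark: in your square the top and bottom arrows should be labelled $i \circ ({-})$ and $j \circ ({-})$ (post-composition), as the domains and codomains already indicate.
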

\begin{proof}
Let denote by $h_1^{-1}$ (resp. $h_2^{-1}$) a quasi-inverse of $h_1$ (resp. $h_2$).
Without further hypotheses, Proposition~\ref{prop:inverter_morphism} already states that
$i * \alpha$ is invertible if and only if $j * \beta$ is so.
We will therefore suppose that it is the case until the end of this proof.

Let us assume the existence of a $h_0 \colon X_0 \isoto Y_0$ and
of a 2-isomorphism $h_1 \circ i \Isoto j \circ h_0$.
The latter induces for any object $W$ of $\bicat{C}$ an invertible transformation
between the canonical functors
\[\begin{tikzcd}
    \Hom_{\bicat{C}}(W, X_0)
        \rar["i \circ {-}", ""{coordinate, name=i}]
        \dar{h_0 \circ {-}} &
    \InvCone(W, \alpha) \dar{h_1 \circ {-}} \\
    \Hom_{\bicat{C}}(W, Y_0)
        \rar["j \circ {-}"{name=j}]
        \ar[from=i, to=j, shorten <=0.2em, Rightarrow, "\sim" sloped] &
    \InvCone(W, \alpha)
\end{tikzcd}.\]
Since $h_0$ and $h_1$ are equivalences in $\bicat{C}$,
one of the two horizontal functors is an equivalence of categories if and only if the other is so,
hence the first assertion.

Assume now that $(X_0, i)$ and $(Y_0, j)$ are both inverters.
According to Proposition~\ref{prop:inverter_morphism},
there are a $h_0 \colon X_0 \to Y_0$ and a $\eta_0 \colon h_1 \circ i \Isoto j \circ h_0$.
The same proposition, applied with $h_1^{-1}$ and $h_2^{-1}$ instead of $h_1$ and $h_2$,
provides $h'_0 \colon Y_0 \to X_0$ and $\eta'_0 \colon h_1^{-1} \circ j \Isoto i \circ h'_0$.
The diagram
\[\begin{tikzcd}
    && X_1 \dar[Rightarrow, "\sim" sloped, "\eta_0" swap] \ar[dr, "j"] \\
    Y_0 \rar{h'_0} \ar[dr, "j" swap] & X_0 \rar{i} \ar[ur, "h_0"] \dar[Rightarrow, "\sim" sloped, "\eta'_0" swap] & X_1 \rar{h_1} & Y_1 \\
    & Y_1 \ar[ur, "h_1^{-1}" swap] \ar[urr, bend right=24, "\Id_{Y_1}"{swap, near end}, ""{coordinate, near end, name=Id}]
\ar[from=ur, to=Id, Rightarrow, shorten >=0.3em, "\sim" sloped]
\end{tikzcd}\]
describes a 2-isomorphism $j \circ h_0 \circ h'_0 \Isoto j$.
By universal property of inverters, it implies the existence of a 2-isomorphism $h_0 \circ h'_0 \Isoto \Id_{Y_0}$.
A similar argument can be applied for $h'_0 \circ h_0$, hence the second assertion.
\end{proof}

\begin{corollary}
If, on top of the hypotheses of the previous proposition, conditions of Theorem~\ref{thm:biadj_triangle} are met,
then $\bifun{G}$ is fully faithful.
\end{corollary}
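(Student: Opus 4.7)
The plan is to deduce the full faithfulness of $\bifun{G}$ from the fact that the counit $\Sigma$ of the biadjunction $\bifun{M} \dashv \bifun{G}$ constructed in Theorem~\ref{thm:biadj_triangle} is itself a pseudonatural equivalence, and then to invoke the classical characterisation of fully faithful right biadjoints as those whose counit is a (pointwise) equivalence.

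First I would observe that, under the standing hypotheses of the preceding proposition (notably $\Theta$ being an equivalence), the modification $\nu$ of condition~\ref{it:Phi_desc_type} of Theorem~\ref{thm:biadj_triangle} can be taken to be the one described in \eqref{eq:nu_from_mu}. Condition~\ref{it:Phi_desc_type} then reads exactly as assertion~\ref{it:inverter_nu_from_mu} of that proposition, so the equivalence of assertions yields assertion~\ref{it:Sigma_equivalence}: there exists a pseudonatural equivalence $\bifun{M G} \Isoto \ID_{\bicat{B}}$ fitting, with an invertible modification, into the square involving $\Lambda_{\bifun{G}}$, $\Theta'$ and $\Phi$. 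Now the counit $\Sigma$ produced in Subsection~\ref{subsec:counit} is characterised, up to a compatible invertible modification, by exactly the same universal property of the inverter $(\ID_{\bicat{B}}, \Phi)$ of $\nu$ applied to this same square. It must therefore coincide with the equivalence supplied by the proposition, so that $\Sigma$ is itself a pseudonatural equivalence.

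Granting this, I would fix two objects $B_1, B_2$ of $\bicat{B}$ and factor the local functor $\bifun{G}_{B_1, B_2}$ through the hom-category equivalence provided by the biadjunction $\bifun{M} \dashv \bifun{G}$:
\[
    \biHom_{\bicat{B}}(B_1, B_2) \xrightarrow{\bifun{G}_{B_1, B_2}} \biHom_{\bicat{C}}(\bifun{G}(B_1), \bifun{G}(B_2)) \xrightarrow{\Sigma_{B_2} \circ \bifun{M}({-})} \biHom_{\bicat{B}}(\bifun{M G}(B_1), B_2).
\]
Pseudonaturality of $\Sigma$ identifies this composite, up to natural isomorphism, with the precomposition functor $f \mapsto f \circ \Sigma_{B_1}$. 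Since $\Sigma_{B_1}$ is an equivalence in $\bicat{B}$, that precomposition functor is an equivalence of categories; the second factor is an equivalence because it implements the biadjunction. Two-out-of-three for equivalences of categories then forces $\bifun{G}_{B_1, B_2}$ to be an equivalence, yielding the full faithfulness of $\bifun{G}$.

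The only delicate point is the identification of the $\Sigma$ built via the inverter universal property with the pseudonatural equivalence furnished by assertion~\ref{it:Sigma_equivalence}; I expect this to be the main (but essentially formal) obstacle, resolved by appealing to the uniqueness up to compatible invertible modification of fillers of the inverter cone, which transfers the equivalence property between the two candidates.
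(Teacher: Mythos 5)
Your proposal is correct and matches the paper's argument, which simply observes that the counit $\Sigma$ of the biadjunction $\bifun{M} \dashv \bifun{G}$ is an equivalence (via assertion~\ref{it:Sigma_equivalence} of the preceding proposition, identified with the constructed counit through the inverter universal property) and concludes full faithfulness of $\bifun{G}$ from that. You merely spell out the two steps the paper leaves implicit — the identification of the two candidates for $\Sigma$ and the standard hom-category argument — so this is essentially the same proof in expanded form.
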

\begin{proof}
It results from the fact that the counit $\Sigma$ of the biadjunction $\bifun{M} \dashv \bifun{G}$ is an equivalence.
\end{proof}

\subsection{Restatement of Theorem~\ref{thm:objectwise_biadj_triangle}}

We are now going to concentrate on the simplification of
the statement of Theorem~\ref{thm:objectwise_biadj_triangle} when $\bifun{G}$ is fully faithful.
First, one can notice that the data of a modification $\mu \colon \Phi_{\bifun{FN}} \Rrightarrow \Xi$
is equivalent to the data of a modification $\bifun{G} (\Phi_{\bifun{FN}}) \Rrightarrow \bifun{G} \Xi$.
Then, $\phi''_2$ combined with some exchanges furnishes
\[ \bifun{G} (\Phi_{\bifun{FN}}) \Iisoto
\bifun{G} \left((\Theta')_{\bifun{FN}} \circ \Alpha_{\bifun{FN}, \bifun{G}, \bifun{FN}}^{-1}\right)
\circ \Alpha_{\bifun{G}, \bifun{FN}, \bifun{G} (\bifun{FN})} \circ \Psi''_{\bifun{G} (\bifun{FN})}. \]
Thus, simplifying by $(\Theta')_{\bifun{FN}} \circ \Alpha_{\bifun{FN}, \bifun{G}, \bifun{FN}}^{-1}$,
the data of $\mu$ is equivalent
to the data of a modification $\Alpha_{\bifun{G}, \bifun{FN}, \bifun{G} (\bifun{FN})} \circ \Psi''_{\bifun{G} (\bifun{FN})} \Rrightarrow \bifun{G} (\bifun{FN} \Psi'')$,
which is in turn equivalent to the data of a modification
$\mu' \colon \Psi_{\bifun{H N}} \Rrightarrow (\bifun{H N})\Psi$
thanks to $\Gamma$.

In addition, if $\bifun{G}$ has a left biadjoint, then a 2-morphism of $\bicat{B}$ admits an inverter in $\bicat{B}$
if and only if its image by $\bifun{G}$ has an inverter in $\bicat{C}$,
which will automatically belongs to the essential image of $\bifun{G}$.
Therefore, if Theorem~\ref{thm:objectwise_biadj_triangle} holds,
the following assertions on an object $C$ of $\bicat{C}$ become equivalent:
\begin{enumerate}
\item $C$ belongs to the essential image of $\bifun{G}$;
\item $\Omega_C \colon C \to \bifun{G M}(C)$ is an equivalence;
\item $(C, \Psi_C)$ is an inverter of $\mu'_C$.
\end{enumerate}
This leads us to the following reformulation of our theorems.

\begin{theorem}\label{thm:biadj_triangle_ff}
Keep the same notations as before.
Assume that $\bifun{G}$ fully faithful and:
\begin{enumerate}
\item there is a modification
\[\begin{tikzcd}[arrows=Rightarrow, column sep=large]
    \bifun{H N}
        \rar[bend left, start anchor=north east, end anchor=north west, "\Psi_{\bifun{H N}}", ""{coordinate, name=PsiHN}]
        \rar[bend right, start anchor=south east, end anchor=south west, "(\bifun{H N})\Psi" below, ""{coordinate, name=HNPsi}]
        \ar[from=PsiHN, to=HNPsi, nfold=3, shorten=0.2em, "\mu'"] &
    (\bifun{H N})(\bifun{H N})
\end{tikzcd}\]
such that, for any object $C$ of $\bicat{C}$, $\mu'_C$ has an inverter $(M'(C), \Lambda'_C)$ in $\bicat{C}$;
\item $\mu' * \Psi$ is invertible;
\item\label{it:ess_img_G} an object $C$ of $\bicat{C}$ is in the essential image of $\bifun{G}$ if and only if
$(C, \Psi_C)$ is an inverter of $\mu'_C$;
\item $\bifun{H N}$ preserves the pseudomonicity of $\Lambda'_C$ for any $C$.
\end{enumerate}
Then, for any object $C$ of $\bicat{C}$, $M'(C)$ is in the essential image of $\bifun{G}$.
Choosing for any $C$ an object $M(C)$ of $\bicat{B}$ so that $\bifun{G} M(C)$ is equivalent to $M'(C)$,
one gets a pseudofunctor $\bifun{M} \colon \bicat{C} \to \bicat{B}$ constituting a left biadjoint of $\bifun{G}$.
\end{theorem}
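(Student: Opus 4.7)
The strategy is to reduce the statement to Theorem~\ref{thm:objectwise_biadj_triangle}. Since $\bifun{G}$ is fully faithful, the first proposition of this subsection makes $\Theta$ (hence $\Theta'$) a pseudonatural equivalence, so $\nu$ can be defined from $\mu$ via \eqref{eq:nu_from_mu}; the preamble to Theorem~\ref{thm:biadj_triangle_ff} then explains that the datum of a modification $\mu' \colon \Psi_{\bifun{HN}} \Rrightarrow (\bifun{HN})\Psi$ in $\bicat{C}^{\bicat{C}}$ is equivalent to that of a modification $\mu \colon \Phi_{\bifun{FN}} \Rrightarrow \Xi$ in $\bicat{B}^{\bicat{C}}$, with condition~2 of Theorem~\ref{thm:biadj_triangle_ff} translating to the invertibility of $\bifun{G}(\mu) * \Psi''$ and condition~\ref{it:ess_img_G} corresponding to the hypothesis that $(\ID_{\bicat{B}}, \Phi)$ is an inverter of $\nu$. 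The substantial remaining task is to verify that each $M'(C)$ lies in the essential image of $\bifun{G}$; once this is done, full faithfulness of $\bifun{G}$ turns the pointwise inverters $\Lambda'_C$ in $\bicat{C}$ into pointwise inverters $\Lambda_C$ of the translated $\mu_C$ in $\bicat{B}$, and Theorem~\ref{thm:objectwise_biadj_triangle} applies.

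By condition~\ref{it:ess_img_G}, showing that $M'(C)$ is in the essential image of $\bifun{G}$ is equivalent to showing that $(M'(C), \Psi_{M'(C)})$ is an inverter of $\mu'_{M'(C)}$. The cone condition $\mu'_{M'(C)} * \Psi_{M'(C)}$ invertible is the $M'(C)$-component of condition~2. For universality, the key observation is that each $\bifun{HN}(C) \simeq \bifun{GFN}(C)$ (via $\Gamma$) lies in the essential image of $\bifun{G}$, so condition~\ref{it:ess_img_G} applied at $\bifun{HN}(C)$ makes $(\bifun{HN}(C), \Psi_{\bifun{HN}(C)})$ an inverter of $\mu'_{\bifun{HN}(C)}$.

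Given an inverter cone $(X, g \colon X \to \bifun{HN}(M'(C)))$ of $\mu'_{M'(C)}$, the modification axiom for $\mu'$ at the 1-morphism $\Lambda'_C$ expresses $\mu'_{\bifun{HN}(C)} * \bifun{HN}(\Lambda'_C)$ as a composite of $\bifun{HN}(\bifun{HN}(\Lambda'_C)) * \mu'_{M'(C)}$ with the pseudonaturality cells $\Psi_{\bifun{HN}(\Lambda'_C)}$ and $\bifun{HN}(\Psi_{\Lambda'_C})$; whiskering by $g$ then shows that $\bifun{HN}(\Lambda'_C) \circ g$ is an inverter cone of $\mu'_{\bifun{HN}(C)}$. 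Universality of $\Psi_{\bifun{HN}(C)}$ yields a 1-morphism $\tilde h \colon X \to \bifun{HN}(C)$, unique up to compatible 2-iso, with $\Psi_{\bifun{HN}(C)} \circ \tilde h \Isoto \bifun{HN}(\Lambda'_C) \circ g$. A further use of the modification axiom---this time at the 1-morphism $\Psi_C$---combined with condition~2 at $\bifun{HN}(C)$ and the invertibility established above should produce the invertibility of $\mu'_C * \tilde h$, so that $\tilde h$ in turn factors through the inverter $\Lambda'_C$ as $\tilde h \Isoto \Lambda'_C \circ \tilde g$ for some $\tilde g \colon X \to M'(C)$. The pseudonaturality iso $\Psi_{\Lambda'_C}$ then gives
\[ \bifun{HN}(\Lambda'_C) \circ (\Psi_{M'(C)} \circ \tilde g) \Isoto \Psi_{\bifun{HN}(C)} \circ \Lambda'_C \circ \tilde g \Isoto \Psi_{\bifun{HN}(C)} \circ \tilde h \Isoto \bifun{HN}(\Lambda'_C) \circ g, \]
and the pseudomonicity of $\bifun{HN}(\Lambda'_C)$ (condition~4) upgrades this to the required $\Psi_{M'(C)} \circ \tilde g \Isoto g$; uniqueness is handled analogously through the three universal properties.

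The hard part is the verification that $\mu'_C * \tilde h$ is invertible, as without this the factorization of $\tilde h$ through $\Lambda'_C$ is not available. Once the essential-image claim has been secured, one picks for each $C$ a lift $M(C) \in \bicat{B}$ with $\bifun{G}(M(C)) \simeq M'(C)$; the inverter cone $\Lambda'_C$ descends through this equivalence---again by full faithfulness of $\bifun{G}$---to an inverter cone $\Lambda_C$ of the reconstructed $\mu_C$ in $\bicat{B}$, so that Theorem~\ref{thm:objectwise_biadj_triangle} produces the pseudofunctor $\bifun{M}$ together with the biadjunction $\bifun{M} \dashv \bifun{G}$.
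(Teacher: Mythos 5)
Your overall skeleton matches the paper's: reduce to Theorem~\ref{thm:objectwise_biadj_triangle} via the translation $\mu \leftrightarrow \mu'$ set up in the preamble, prove that each $M'(C)$ lies in the essential image of $\bifun{G}$ by showing $(M'(C), \Psi_{M'(C)})$ is an inverter of $\mu'_{M'(C)}$, use condition~2 at $M'(C)$ for the cone condition, condition~\ref{it:ess_img_G} at $\bifun{H N}(C)$ for the inverter $(\bifun{H N}(C), \Psi_{\bifun{H N}(C)})$, and the pseudomonicity of $\bifun{H N}(\Lambda'_C)$ for the final comparison $\Psi_{M'(C)} \circ \tilde g \Isoto g$. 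The paper packages the universality argument as a standalone lemma about a stack of two inverters; you inline it, which is fine in principle.

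However, the step you yourself flag as "the hard part" --- the invertibility of $\mu'_C * \tilde h$ --- is a genuine gap, and the ingredients you propose do not close it. The modification axiom of $\mu'$ at the 1-morphism $\Psi_C$, whiskered by $\tilde h$, only relates $(\bifun{H N})^2(\Psi_C) * (\mu'_C * \tilde h)$ to $\mu'_{\bifun{H N}(C)} * (\bifun{H N}(\Psi_C) \circ \tilde h)$ up to invertible cells; to conclude you would need either that $(\bifun{H N})^2(\Psi_C)$ is conservative (not available) or that $\bifun{H N}(\Psi_C) \circ \tilde h$ is already an inverter cone of $\mu'_{\bifun{H N}(C)}$ --- which is essentially the statement being proved, since $\mu'_C$ is exactly the 2-cell comparing $\Psi_{\bifun{H N}(C)}$ and $\bifun{H N}(\Psi_C)$. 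Condition~2 at $\bifun{H N}(C)$ does not repair this. The paper's argument is different at precisely this point: it whiskers $\mu'_C * \tilde h$ with $\Psi_{(\bifun{H N})^2(C)}$, which is \emph{conservative} because $(\bifun{H N})^2(C)$ also lies in the essential image of $\bifun{G}$, so that condition~\ref{it:ess_img_G} --- used a second time, at $(\bifun{H N})^2(C)$, something your sketch never invokes --- makes $\Psi_{(\bifun{H N})^2(C)}$ an inverter, hence fully faithful and conservative. Then the 2-cell pseudonaturality of $\Psi$ applied to the 2-morphism $\mu'_C$ (not the modification axiom of $\mu'$ at $\Psi_C$) rewrites $\Psi_{(\bifun{H N})^2(C)} * (\mu'_C * \tilde h)$, up to the invertible cells $\Psi_{\Psi_{\bifun{H N}(C)}}$ and $\Psi_{\bifun{H N}(\Psi_C)}$, as $\bifun{H N}(\mu'_C) * (\Psi_{\bifun{H N}(C)} \circ \tilde h) \cong \bifun{H N}(\mu'_C * \Lambda'_C) * g$, which is invertible since $\Lambda'_C$ is an inverter cone of $\mu'_C$; conservativity of $\Psi_{(\bifun{H N})^2(C)}$ then yields the invertibility of $\mu'_C * \tilde h$. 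Without this extra conservative leg the factorization of $\tilde h$ through $\Lambda'_C$, and hence your whole universality argument, does not go through; you would also need to make the fullness/faithfulness part of the universal property explicit rather than "handled analogously", which the paper does by exhibiting an explicit quasi-inverse functor and the two natural isomorphisms (using the pseudomonicity of $\bifun{H N}(\Lambda'_C)$ exactly where you use it).
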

\begin{proof}
To apply Theorem~\ref{thm:objectwise_biadj_triangle},
it suffices to show that $M'(C)$ is indeed in the essential image of $\bifun{G}$ for any object $C$ of $\bicat{C}$.
According to condition~\ref{it:ess_img_G}, $M'(C)$ is in the essential image of $\bifun{G}$
if $(M'(C), \Psi_{M'(C)})$ is an inverter of $\mu'_{M'(C)}$.
However, there is the following diagram in $\bicat{C}$
\[\begin{tikzcd}[sep=huge]
    M'(C) \rar["\Lambda'_{C}", ""{coordinate, name=i, xshift=.8ex}] \dar{\Psi_{M'(C)}}
    &
    \bifun{H N}(C)
        \rar[bend left, start anchor=north east, end anchor=north west, "\Psi_{\bifun{H N}(C)}"{description, near start, name=f0}]
        \dar["\Psi_{\bifun{H N}(C)}"{swap, name=h1}] &[1.2em]
    (\bifun{H N})^2(C) \dar{\Psi_{(\bifun{H N})^2 C}} \\
    \bifun{H N}(M'(C)) \rar["\bifun{H N}\left(\Lambda'_{C}\right)" name=j]
        \ar[Rightarrow, from=i, to=j, shorten <=0.2em, "\sim"{sloped, swap}, "\Psi_{\Lambda'_{C}}^{-1}" swap]
        \dar[bend right=45, "\Psi_{\bifun{H N}(M'(C))}"{description, near start, name=k0}]
    &
    (\bifun{H N})^2(C)
        \rar[bend left, start anchor=north east, end anchor=north west, "\bifun{H N}\left(\Psi_{\bifun{H N}(C)}\right)"{description, near start, name=g0}]
        \ar[from=f0, to=g0, end anchor={[xshift=-1ex]north}, Rightarrow, shorten=0.2em, "\sim"{near start, sloped}]
        \ar[from=u, to=ur, crossing over, bend right, start anchor=south east, end anchor=south west, "\bifun{H N}(\Psi_C)"{description, near end, name=f1}]
        \ar[from=f0, to=f1, Rightarrow, shorten=0.2em, "\mu'_C"]
        \rar[bend right, start anchor=south east, end anchor=south west, "(\bifun{H N})^2 (\Psi_C)"{description, near end, name=g1}]
        \ar[from=g0, to=g1, Rightarrow, shorten=0.2em, "\bifun{H N} (\mu'_C)" swap]
        \dar[bend right=45, "\Psi_{(\bifun{H N})^2 C}"{description, near start, name=l0}]
        \dar[bend left=45, "\bifun{H N}\left(\Psi_{\bifun{H N}(C)}\right)"{description, near end, name=l1}]
        \ar[from=l0, to=l1, Rightarrow, "\mu'_{\bifun{H N}(C)}"{near end}]
        \ar[from=k0, to=l0, Rightarrow, "\sim" sloped, "\Psi_{(\bifun{H N}) \Lambda'_{C}}"{swap, near end}]
        \ar[from=l, to=dl, bend left=45, crossing over, "\bifun{H N}\left(\Psi_{M'(C)}\right)"{description, near end, name=k1}]
        \ar[from=k0, to=k1, Rightarrow, "\mu'_{M'(C)}"{near end}]
        \ar[from=k1, to=l1, Rightarrow, crossing over, "\sim" sloped] &
    (\bifun{H N})^3(C)
        \ar[from=f1, to=g1, crossing over, Rightarrow, shorten=0.2em, "\sim"{swap, sloped}] \\[1.8em]
    (\bifun{H N})^2(M'(C)) \rar[swap]{(\bifun{H N})^2 \Lambda'_{C}} &
    (\bifun{H N})^3(M'(C))
\end{tikzcd}\]
whose top row and rightmost column are inverters.
Furthermore, $\Psi_{(\bifun{H N})^2 C}$ is conservative as an inverter of $\mu'_{(\bifun{H N})^2 C}$
and $\bifun{H N}(\Lambda'_C)$ pseudomonic by hypothesis.
The following lemma thus concludes.
\end{proof}

\begin{lemma}
Let $\bicat{C}$ be any bicategory in which there is a diagram
\[\begin{tikzcd}[sep=large]
    X_0 \rar["f_0", ""{coordinate, name=i}] \dar{h_0}
    &
    X_1
        \rar[bend left, start anchor=north east, end anchor=north west, "f_1"{description, near start, name=f1}]
        \dar["i_0"{swap, name=h1}] &[1.2em]
    X_2 \dar{j} \\
    Y_0 \rar["g_0" name=j]
        \ar[Rightarrow, from=i, to=j, shorten <=0.2em, "\sim" sloped, "\eta_0" swap]
        \dar[bend right=45, "h_1"{description, near start, name=k0}]
    &
    Y_1
        \rar[bend left, start anchor=north east, end anchor=north west, "g_1"{description, near start, name=g1}]
        \ar[from=f1, to=g1, Rightarrow, shorten=0.2em, "\sim"{near start, sloped}, "\eta_1" {swap, near end}]
        \ar[from=u, to=ur, crossing over, bend right, start anchor=south east, end anchor=south west, "f_2"{description, near end, name=f2}]
        \ar[from=f1, to=f2, Rightarrow, shorten=0.2em, "\alpha"]
        \rar[bend right, start anchor=south east, end anchor=south west, "g_2"{description, near end, name=g2}]
        \ar[from=g1, to=g2, Rightarrow, shorten=0.2em, "\beta" swap]
        \dar[bend right=45, "i_1"{description, near start, name=i1}]
        \dar[bend left=45, "i_2"{description, near end, name=i2}]
        \ar[from=i1, to=i2, Rightarrow, "\delta"]
        \ar[from=k0, to=i1, Rightarrow, "\sim"{near end}, "\eta_3"{swap, near end}]
        \ar[from=l, to=dl, bend left=45, crossing over, "h_2"{description, near end, name=k1}]
        \ar[from=k0, to=k1, Rightarrow, "\gamma" swap]
        \ar[from=k1, to=i2, Rightarrow, crossing over, "\sim"{near start}, "\eta_4"{swap, near start}] &
    Y_2
        \ar[from=f2, to=g2, crossing over, Rightarrow, shorten=0.2em, "\sim"{swap, sloped}, "\eta_2" near start] \\[1.8em]
    Z_0 \rar[swap]{k} &
    Z_1
\end{tikzcd}\]
with
\[ \eta_2 \circ (j * \alpha) = (\beta * i_0) \circ \eta_1, \qquad
\eta_4 \circ (k * \gamma) = (\delta * i_0) \circ \eta_3, \]
$j$ conservative and $g_0$ pseudomonic.
If $\beta * g_0$ and $\gamma * h_0$ are invertible, $(X_0, f_0)$ is an inverter of $\alpha$ and $(X_1, i_0)$ of $\delta$,
then $(X_0, h_0)$ is an inverter of $\gamma$.
\end{lemma}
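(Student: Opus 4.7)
The plan is to show, for every test object $W$ of $\bicat{C}$, that precomposition with $h_0$ gives an equivalence of categories
\[ \biHom_{\bicat{C}}(W, X_0) \longrightarrow \InvCone(W; \gamma), \qquad \tilde\phi \longmapsto h_0 \circ \tilde\phi. \]
Both essential surjectivity and local full faithfulness will be obtained by successive use of the universal properties of $(X_1, i_0)$ and $(X_0, f_0)$, with $j$ conservative and $g_0$ pseudomonic doing the ``detection'' work.

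For essential surjectivity, start from an inverter cone $\phi \colon W \to Y_0$ of $\gamma$, i.e. $\gamma * \phi$ invertible. Whiskering the lower coherence equation by $\phi$, the 2-morphism $\delta * (g_0 \circ \phi)$ is conjugate (via the iso cells $\eta_3 * \phi$ and $\eta_4 * \phi$) to $k * (\gamma * \phi)$, which is invertible; hence $(W, g_0 \circ \phi)$ is an inverter cone of $\delta$, producing $\psi \colon W \to X_1$ together with a 2-iso $\xi \colon i_0 \circ \psi \Isoto g_0 \circ \phi$ from the universal property of $(X_1, i_0)$. Whiskering next the upper coherence equation by $\psi$ and using $\xi$ shows $j * (\alpha * \psi)$ is conjugate to $(\beta * g_0) * \phi$, which is invertible by hypothesis; since $j$ is conservative, $\alpha * \psi$ is invertible, so $(W, \psi)$ is an inverter cone of $\alpha$ and yields $\tilde\phi \colon W \to X_0$ with a 2-iso $f_0 \circ \tilde\phi \Isoto \psi$. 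Pasting $\eta_0$ at $\tilde\phi$ with $\xi$ then gives a 2-iso
\[ g_0 \circ (h_0 \circ \tilde\phi) \Isoto i_0 \circ (f_0 \circ \tilde\phi) \Isoto i_0 \circ \psi \Isoto g_0 \circ \phi. \]
Pseudomonicity of $g_0$ lets us lift this to the desired iso $h_0 \circ \tilde\phi \Isoto \phi$.

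For full faithfulness, given $\tilde\phi_1, \tilde\phi_2 \colon W \to X_0$ and a 2-cell $\tilde\theta \colon h_0 \circ \tilde\phi_1 \Rightarrow h_0 \circ \tilde\phi_2$, whisker by $g_0$ and conjugate through $\eta_0$ to obtain a 2-cell $i_0 \circ (f_0 \circ \tilde\phi_1) \Rightarrow i_0 \circ (f_0 \circ \tilde\phi_2)$. Because any inverter cone is a fully faithful 1-morphism, successive local full faithfulness of $i_0$ and then of $f_0$ produces a unique $\theta \colon \tilde\phi_1 \Rightarrow \tilde\phi_2$ whose image by $g_0 \circ -$, conjugated back through $\eta_0$, agrees with $g_0 * \tilde\theta$; pseudomonicity of $g_0$ (which entails local faithfulness) then forces $h_0 * \theta = \tilde\theta$ and yields its uniqueness.

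The main obstacle is purely bookkeeping: one must keep the chain of conjugations by the associators and by the coherence cells $\eta_0, \ldots, \eta_4$ consistent, so that the composite 2-iso in the last step of essential surjectivity really is the one lying over $\phi$, and so that the cell produced by the universal properties in the full-faithfulness step genuinely whiskers to $\tilde\theta$. Once this accounting is carried out, the use of the three hypotheses---$j$ conservative, $g_0$ pseudomonic, and $(X_0, f_0)$, $(X_1, i_0)$ inverters---is mechanical.
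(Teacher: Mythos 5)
Your proposal is correct and follows essentially the same route as the paper: you post-compose with $g_0$ and use the lower coherence cell to land in inverter cones of $\delta$, apply the universal property of $(X_1, i_0)$, then use the upper coherence cell together with the invertibility of $\beta * g_0$ and the conservativity of $j$ to land in inverter cones of $\alpha$, apply the universal property of $(X_0, f_0)$, and finally invoke pseudomonicity of $g_0$ to descend the comparison isomorphism --- exactly the chain of reductions in the paper. The only difference is packaging: the paper assembles these steps into an explicit quasi-inverse functor $V U'$ with two natural isomorphisms, whereas you verify essential surjectivity objectwise and full faithfulness (via full faithfulness of post-composition by the inverters $i_0$ and $f_0$ and faithfulness of $g_0$), which amounts to the same argument.
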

\begin{proof}
Let $W$ be an object of $\bicat{C}$.
We must show that the canonical functor
\begin{equation}\label{eq:fonct_invcone_stacked}
    \biHom_{\bicat{C}}(W, X_0) \to \InvCone(W; \gamma)
\end{equation}
has a quasi-inverse.
According to Proposition~\ref{prop:inverter_morphism}, the post-composition functor
\[ \biHom_{\bicat{C}}(W, Y_0) \xrightarrow{g_0 \circ {-}} \biHom_{\bicat{C}}(W, Y_1) \]
restricts to a functor
\[ \InvCone(W; \gamma) \to \InvCone(W; \delta) \]
which can be composed with the functor
\[ U \colon \InvCone(W; \delta) \isoto \biHom_{\bicat{C}}(W, X_1) \]
provided by the universal property of $(X_1, i_0)$,
hence a functor
\[ \InvCone(W; \gamma) \xrightarrow{U(g_0 \circ {-})} \biHom_{\bicat{C}}(W, X_1). \]

In fact, the latter takes its values in $\InvCone(W; \alpha)$,
because for any $h \colon W \to Y_0$ such that $\gamma * h$ is invertible,
\begin{align*}
    j * (\alpha * U(g_0 \circ h))
    &= \alpha_{j, f_2, U(g_0 \circ h)} \circ \left((j * \alpha) * U(g_0 \circ h)\right) \circ \alpha_{j, f_1, U(g_0 \circ h)}^{-1} \\
    &=\begin{multlined}[t]
    \alpha_{j, f_2, U(g_0 \circ h)} \circ \left(\left(\eta_2^{-1} \circ (\beta * i_0) \circ \eta_1 \right) * U(g_0 \circ h)\right) \\
    \circ \alpha_{j, f_1, U(g_0 \circ h)}^{-1}.
    \end{multlined}
\end{align*}
The universal property of $(X_1, i_0)$ provides a natural isomorphism
$\epsilon \colon \Id_{\InvCone(W; \delta)} \Isoto i_0 \circ U$, thus
\[ \beta * (i_0 \circ U(g_0 \circ h)) = (g_2 * \epsilon_{g_0 \circ h}) \circ
   (\beta * (g_0 \circ h)) \circ (g_1 * \epsilon_{g_0 \circ h}^{-1}) .\]
Since $\beta * g_0$ is invertible by hypothesis,
so are $\beta * (i_0 \circ U(g_0 \circ h))$, $j * (\alpha * U(g_0 \circ h))$
and $\alpha * U(g_0 \circ h)$ because $j$ is conservative.

Let us denote by $U'$ the obtained functor
\[ \InvCone(W; \gamma) \to \InvCone(W; \alpha) \]
and by $V$ the functor
\[ \InvCone(W; \alpha) \isoto \biHom_{\bicat{C}}(W, X_0) \]
given by the universal property of $(X_0, f_0)$.

A natural isomorphism $\Id_{\biHom_{\bicat{C}}(W, X_0)} \Isoto V U'(h_0 \circ {-})$
is built at any object $l$ of $\biHom_{\bicat{C}}(W, X_0)$
by taking the inverse image of
\[ i_0 \circ (f_0 \circ l) \Isoto (i_0 \circ f_0) \circ l \Isoto[\eta_0 * l]
(g_0 \circ h_0) \circ l \Isoto g_0 \circ (h_0 \circ l) \]
by the sequence of equivalences
\[ \Hom(l, V U (g_0 \circ (h_0 \circ l))) \isoto \Hom(f_0 \circ l, U (g_0 \circ (h_0 \circ l)))
\isoto \Hom(i_0 \circ (f_0 \circ l), g_0 \circ (h_0 \circ l)). \]

Constructing a natural isomorphism $\Id_{\InvCone(W; \gamma)} \Isoto h_0 \circ V U'({-})$
will complete our proof that $V U'$ is a quasi-inverse of \eqref{eq:fonct_invcone_stacked}.
As $g_0$ is pseudomonic, it is enough to have a natural isomorphism $g_0 \circ \Id_{\InvCone(W; \gamma)} \Isoto g_0 \circ (h_0 \circ V U'({-}))$.
But for any object $h$ of $\InvCone(W; \gamma)$, there is
\[ h \Isoto[\epsilon_h] i_0 \circ U'(h) \Isoto[i_0 * \theta_{U'(h)}] i_0 \circ (f_0 \circ V U'(h))
\Isoto[\alpha_{i_0, f_0, V U'(h)} \circ \eta_0 * V U'(h) \circ \alpha^{-1}_{g_0, h_0, V U'(h)}]
g_0 \circ (h_0 \circ V U'(h)) \]
with $\theta$ the natural isomorphism
$\Id_{\InvCone(W; \alpha)} \Isoto f_0 \circ V$
associated to the universal property of $(X_0, f_0)$.
This 2-isomorphism depends naturally of $h$, hence the end of the proof.
\end{proof}

\subsection{A biadjunction for Kleisli bicategory}

Let $\bicat{C}$ be a bicategory.
Similar to the 1-dimensional case, to any pseudomonad $\bifun{T}$ on $\bicat{C}$
is associated a \emph{Kleisli bicategory} $\Kl(\bifun{T})$ \cite{Cheng_2003}
together with a canonical biadjunction
\[\begin{tikzcd}
    \bicat{C}
        \rar[shift left=2, "\bifun{J}", ""{coordinate, name=J}] &
    \Kl(\bifun{T})
        \lar[shift left=1, "\tilde{\bifun{T}}", ""{coordinate, name=T}]
        \ar[from=T, to=J, phantom, "\vdash" marking]
\end{tikzcd}.\]
The objects of $\Kl(\bifun{T})$ are the same as those of $\bicat{C}$
and the category of morphisms between two objects $X$ and $Y$
is $\biHom_{\bicat{C}}(X, \bifun{T} Y)$.
The identity of an object is given by the unit $\Psi_X \colon X \to \bifun{T}(X)$ of the pseudomonad evaluated at $X$.
The pseudofunctor $\bifun{J}$ is the identity on objets and
sends any 1-morphism $f \colon X \to Y$ of $\bicat{C}$ to $\Psi_Y \circ f$,
whereas $\tilde{\bifun{T}}$ sends any $X$ to $\bifun{T}(X)$
and any 1-morphism $g$ of $\Kl(\bifun{T})$ from $X$ to $Y$ to $\Pi_Y \circ \bifun{T}(g)$.
We refer the reader to \cite{Cheng_2003} for more details.

A pseudomonad $(\bifun{T}, \Pi, \Psi, \beta, \eta, \pi)$ is called
a \emph{Kock-Zöberlein-monad} (abbreviated to KZ-monad)
if there exists an adjunction $\bifun{T}(\Psi) \dashv \Pi$ with unit $\eta$ \cite{Marmolejo_1997, Zoberlein_1976}.
Alternatively, a KZ-monad can be defined from the data of a modification
$\mu \colon \Psi_{\bifun{T}} \Rightarrow \bifun{T}(\Psi)$ satisfying three coherence conditions.
This is the definition of \cite{Kock_1995}.

Let $(\bifun{T}, \Pi, \Psi, \beta, \eta, \pi, \mu)$ be a KZ-monad on $\bicat{C}$
and assume also that $\bicat{C}$ has all inverters.
Taking $\bicat{A} = \Kl(\bifun{T})$, $\bifun{H} = \tilde{\bifun{T}}$ and $\bifun{N} = \bifun{J}$,
we see that the two first conditions of Theorem~\ref{thm:biadj_triangle_ff} are met.
Set $\bicat{B}$ the full sub-bicategory of $\bicat{C}$ constituted by the objects $C$
such that $(C, \Psi_C)$ is an inverter of $\mu_C$.

One can wonder if $\tilde{\bifun{T}}$ factors through $\bicat{B}$.
Indeed, the multiplication allows to build for any object $X$ of $\bicat{C}$ two additional 1-morphisms
\[ \Pi_X \colon \bifun{T}^2(X) \to \bifun{T}(X),\quad
\bifun{T}(\Pi_X) \colon \bifun{T}\left(\bifun{T}^2(X)\right) \to \bifun{T}^2(X), \]
with 2-isomorphisms
\begin{gather*}
    \beta_X \colon \Pi_X \circ \Psi_{\bifun{T}(X)} \Isoto \Id_{\bifun{T}(X)}, \\
    \bifun{T}(\Pi_X) \circ \bifun{T}\left(\Psi_{\bifun{T}(X)}\right) \Isoto[\bifun{T}_{\Pi_X, \Psi_{\bifun{T}(X)}}]
    \bifun{T}\left(\Pi_X \circ \Psi_{\bifun{T}(X)}\right) \Isoto[\bifun{T}(\eta_{X})]
    \bifun{T}\left(\Id_{\bifun{T}(X)}\right) \Isoto \Id_{\bifun{T}^2(X)}, \\
    \Psi_{\Pi_X} \colon \bifun{T}(\Pi_X) \circ \Psi_{\bifun{T}^2(X)} \Isoto \Psi_{\bifun{T}(X)} \circ \Pi_X.
\end{gather*}

Inspired by the notion of \emph{split equalizer} \cite[149]{MacLane_1978}, one can define a notion of \emph{split inverter}.
\begin{definition}
Let $\phi$ be a 2-morphism between two parallel 1-morphisms $f_1, f_2 \colon X_1 \to X_2$ of a bicategory $\bicat{C}$.
A \emph{split inverter} of $\phi$ is an inverter cone $(X_0, X_0 \xrightarrow{f_0} X_1)$
together with 1-morphisms $r_0 \colon X_1 \to X_0$, $r_1 \colon X_2 \to X_1$ and 2-isomorphisms
\[\begin{tikzcd}
    X_0 \rar{f_0} \ar[dr, "\Id_{X_0}" swap, ""{coordinate, name=id}] & X_1 \dar["r_0", ""{coordinate, near start, name=r0}] \\
    & X_0 \ar[from=r0, to=id, shorten=0.2em, Rightarrow, bend right, "\sim" sloped]
\end{tikzcd}, \quad
\begin{tikzcd}
    X_1 \rar{f_2} \ar[dr, "\Id_{X_1}" swap, ""{coordinate, name=id}] & X_2 \dar["r_1", ""{coordinate, near start, name=r1}] \\
    & X_1 \ar[from=r1, to=id, shorten=0.2em, Rightarrow, bend right, "\sim" sloped]
\end{tikzcd}, \quad
\begin{tikzcd}
    X_1 \rar{f_1} \dar{r_0} & X_2 \dar["r_1", ""{coordinate, near start, name=r1}] \\
    X_0 \rar["f_0"{name=f0}] & X_1 \ar[from=r1, to=f0, shorten <=0.2em, Rightarrow, bend right, "\sim" sloped]
\end{tikzcd}.\]
\end{definition}
\begin{proposition}
Any split inverter is an inverter.
\end{proposition}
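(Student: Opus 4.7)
The plan is to verify directly the universal property of the inverter on each $\biHom$-category. Fix an object $W$ of $\bicat{C}$. I need to show that the pre-composition functor
\[ f_0 \circ ({-}) \colon \biHom_{\bicat{C}}(W, X_0) \to \InvCone(W; \phi) \]
is an equivalence of categories. Since the retraction $r_0 \circ f_0 \Isoto \Id_{X_0}$ is part of the data, post-composing by $r_0$ yields a candidate quasi-inverse: first I would define
\[ r_0 \circ ({-}) \colon \InvCone(W; \phi) \to \biHom_{\bicat{C}}(W, X_0), \qquad h \mapsto r_0 \circ h, \]
which makes sense because the target is the whole of $\biHom_{\bicat{C}}(W, X_0)$ and imposes no constraint on $r_0 \circ h$.

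One direction of the equivalence is immediate: for $g \colon W \to X_0$, the composite $r_0 \circ (f_0 \circ g)$ is isomorphic to $(r_0 \circ f_0) \circ g$ via the associator, and then to $\Id_{X_0} \circ g \cong g$ via the split datum and the left unitor. This yields a natural isomorphism $\Id \Isoto (r_0 \circ {-}) \circ (f_0 \circ {-})$.

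For the other direction, take $h \colon W \to X_1$ such that $\phi * h$ is invertible. Using associators together with the 2-isomorphism $r_1 \circ f_1 \Isoto f_0 \circ r_0$ supplied by the third piece of the split inverter data, one obtains $f_0 \circ (r_0 \circ h) \cong r_1 \circ (f_1 \circ h)$. The invertibility hypothesis on $\phi * h$ converts $f_1 \circ h$ into $f_2 \circ h$ via $\phi * h$, and finally $r_1 \circ (f_2 \circ h) \cong (r_1 \circ f_2) \circ h \cong \Id_{X_1} \circ h \cong h$ thanks to the second split datum. Concatenating these isomorphisms produces a natural transformation $(f_0 \circ {-}) \circ (r_0 \circ {-}) \Isoto \Id_{\InvCone(W; \phi)}$.

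The only point deserving care is the naturality of both families of isomorphisms in $g$ and in $h$ respectively; but each constituent (associators, unitors, the fixed 2-isomorphisms of the split inverter, and the whiskering of $\phi$) is natural in the obvious variable, so the composites are too. Since both composite functors are naturally isomorphic to identities, $f_0 \circ ({-})$ is an equivalence, and $(X_0, f_0)$ is therefore an inverter of $\phi$. No real obstacle is expected beyond the bookkeeping of associators and unitors, which Mac~Lane coherence trivialises.
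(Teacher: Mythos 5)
Your proposal is correct and follows essentially the same route as the paper: post-composition by $r_0$ as the quasi-inverse, the split datum $r_0 \circ f_0 \Isoto \Id_{X_0}$ for one direction, and the pasted cell built from $r_1 \circ f_1 \Isoto f_0 \circ r_0$, the whiskering of $\phi$, and $r_1 \circ f_2 \Isoto \Id_{X_1}$ for the other, invertible on $\InvCone(W;\phi)$ precisely because $\phi * h$ is invertible there. No substantive difference from the paper's argument.
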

\begin{proof}
The notation of the definition above are kept.
Let $X$ be any object of $\bicat{C}$.
The post-composition by $r_0$ gives a functor
\[ \InvCone(X; \phi) \xrightarrow{r_0 \circ {-}} \biHom_{\bicat{C}}(X, X_0). \]
We are going to show that it is quasi-inverse to the functor
\[ \biHom_{\bicat{C}}(X, X_0) \xrightarrow{f_0 \circ {-}} \InvCone(X; \phi). \]

On the one hand, the 2-isomorphism $r_0 \circ f_0 \Isoto \Id_{X_0}$ provides a natural isomorphism
\[\begin{tikzcd}
    \biHom_{\bicat{C}}(X, X_0) \rar{f_0 \circ {-}}
        \ar[dr, bend right=8, "\Id_{\biHom(X, X_0)}" swap, ""{coordinate, name=id}] &
    \InvCone(X; \phi) \dar["r_0 \circ {-}", ""{coordinate, near start, name=r0}] \\
    & \biHom_{\bicat{C}}(X, X_0) \ar[from=r0, to=id, shorten=0.2em, Rightarrow, bend right=15, "\sim" sloped]
\end{tikzcd}.\]

On the other hand, the 2-morphism obtained from the diagram
\[\begin{tikzcd}
    & X_0 \ar[dr, "f_0"] \dar[Rightarrow, "\sim" sloped] \\
    X_1 \ar[ur, "r_0"] \rar[bend left, "f_1"{description, name=f1}] \rar[bend right, "f_2"{description, name=f2}] \ar[rr, bend right=45, "\Id_{X_1}" swap, ""{coordinate, name=id}] \ar[from=f1, to=f2, Rightarrow, "\phi"] &
    X_2 \rar{r_1} \ar[to=id, Rightarrow, shorten >=0.2em, "\sim"{sloped, near start}] &
    X_1
\end{tikzcd}\]
yields a natural transformation
\[\begin{tikzcd}
    \InvCone(X; \phi) \rar{r_0 \circ {-}}
        \ar[dr, bend right=8, "\Id_{\InvCone(X; \phi)}" swap, ""{coordinate, name=id}] &
    \biHom_{\bicat{C}}(X, X_0) \dar["f_0 \circ {-}", ""{coordinate, near start, name=r0}] \\
    & \InvCone(X; \phi) \ar[from=r0, to=id, shorten=0.2em, Rightarrow, bend right=15]
\end{tikzcd}.\]
which is an isomorphism because for any $f \colon X \to X_1$ object of $\InvCone(X; \phi)$,
$\phi * f$ is invertible.
\end{proof}

\begin{corollary}
Let $X$ be an object of $\bicat{C}$ and $\bifun{T}$ a KZ-monad.
The septuple
$(\bifun{T}(X), \Psi_{\bifun{T}(X)}, \Pi_X, \bifun{T}(\Pi_X), \beta_X, \bifun{T}_{\bifun{T}(X)}^{-1} \circ \bifun{T}(\epsilon_X) \circ \bifun{T}_{\Pi_X, \Psi_{\bifun{T}(X)}}, \Psi_{\Pi_X})$
is a split inverter of $\mu_{\bifun{T}(X)}$.
\end{corollary}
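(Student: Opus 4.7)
The proof amounts to a direct verification that the given septuple fits the data of a split inverter. Set $X_0 = \bifun{T}(X)$, $X_1 = \bifun{T}^2(X)$, $X_2 = \bifun{T}^3(X)$ with $f_0 = \Psi_{\bifun{T}(X)}$, $f_1 = \Psi_{\bifun{T}^2(X)}$, $f_2 = \bifun{T}(\Psi_{\bifun{T}(X)})$, $\phi = \mu_{\bifun{T}(X)}$, $r_0 = \Pi_X$ and $r_1 = \bifun{T}(\Pi_X)$. The three required retraction 2-isomorphisms are exactly the ones already named in the septuple: $\beta_X$ exhibits $r_0 \circ f_0 \Isoto \Id_{X_0}$; the composite built from the pseudofunctoriality laxity $\bifun{T}_{\Pi_X, \Psi_{\bifun{T}(X)}}$, the image $\bifun{T}(\beta_X)$ under $\bifun{T}$, and the inverse of the unit laxity of $\bifun{T}$ at $\bifun{T}(X)$ exhibits $r_1 \circ f_2 \Isoto \Id_{X_1}$; and the pseudonaturality 2-cell $\Psi_{\Pi_X}$ of $\Psi$ at the 1-morphism $\Pi_X$ exhibits $r_1 \circ f_1 \Isoto f_0 \circ r_0$. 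Matching types is immediate from the definitions, so no computation is needed at this stage.

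The only genuinely non-formal step is the verification that $(\bifun{T}(X), \Psi_{\bifun{T}(X)})$ is actually an inverter cone of $\mu_{\bifun{T}(X)}$, i.e.\ that $\mu_{\bifun{T}(X)} * \Psi_{\bifun{T}(X)}$ is invertible. This is the content of one of the Kock coherence axioms in the definition of a KZ-monad recalled before the statement: for every object $Y$ of $\bicat{C}$, the whiskering $\mu_Y * \Psi_Y$ is invertible, being determined up to canonical associators and the unit isomorphism $\beta$ by the inverse of the pseudonaturality 2-cell
\[ \Psi_{\Psi_Y} \colon \bifun{T}(\Psi_Y) \circ \Psi_Y \Isoto \Psi_{\bifun{T}(Y)} \circ \Psi_Y. \]
Instantiating this axiom at $Y = \bifun{T}(X)$ yields the invertibility of $\mu_{\bifun{T}(X)} * \Psi_{\bifun{T}(X)}$. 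The preceding proposition then gives for free that the split inverter we have assembled is in particular an inverter of $\mu_{\bifun{T}(X)}$.

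The main obstacle is purely notational: one must thread the associators, unitors and pseudofunctorial laxities carefully when writing out the middle retraction and the invertibility above, and identify which of the three Kock coherences produces the invertibility of $\mu * \Psi$. No substantive calculation is required beyond this bookkeeping.
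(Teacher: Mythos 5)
Your proposal is correct and matches the paper's (essentially implicit) argument: the corollary is just the identification of the seven data with the split-inverter template, where the middle retraction is the composite of the compositor, $\bifun{T}(\beta_X)$ (the paper's $\epsilon_X$, resp. $\eta_X$ in the preceding display, is a notational slip for $\beta_X$, as you correctly inferred from the types), and the inverse unit constraint of $\bifun{T}$, while $\Psi_{\Pi_X}$ gives the last square. Your extra care in checking the inverter-cone condition, namely that $\mu_{\bifun{T}(X)} * \Psi_{\bifun{T}(X)}$ is invertible via the Kock coherence axiom, is exactly the fact the paper relies on when it asserts that the first two conditions of Theorem~\ref{thm:biadj_triangle_ff} hold for a KZ-monad, so your route is the same as the paper's.
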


The pseudofunctor $\tilde{\bifun{T}}$ thus factor through the canonical inclusion
$\bifun{I} \colon \bicat{B} \hookrightarrow \bicat{C}$, hence a (strict) triangle
\[\begin{tikzcd}
    \Kl(\bifun{T}) \dar[swap]{\tilde{\bifun{T}}'} \ar[dr, "\tilde{\bifun{T}}"{inner sep=0.2ex}] & \\
    \bicat{B} \rar[hook, "\bifun{I}"] & \bicat{C}
\end{tikzcd}.\]
It is also clear that the pseudofunctor $\bifun{J I}$ is left adjoint to $\tilde{\bifun{T}}'$.
To apply Theorem~\ref{thm:biadj_triangle_ff}, it only remains to require that $\bifun{T}$ preserves pseudomonic 1-morphisms.
To summarise,

\begin{proposition}
Let $\bicat{C}$ be a bicategory having all inverters and let
$(\bifun{T}, \Pi, \Psi, \beta, \eta, \pi, \mu)$ be a KZ-monad on it which preserves pseudomonic 1-morphisms.
The inclusion functor of the full sub-bicategory of $\bicat{C}$
made of the objects $C$ such that $(C, \Psi_C)$ is an inverter of $\mu_C$ has a left biadjoint.
\end{proposition}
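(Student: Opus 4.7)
My plan is to apply Theorem~\ref{thm:biadj_triangle_ff} to the triangle displayed just above the statement, taking $\bifun{H} = \tilde{\bifun{T}}$, $\bifun{N} = \bifun{J}$, $\bifun{F} = \tilde{\bifun{T}}'$, $\bifun{G} = \bifun{I}$, $\bifun{L} = \bifun{J I}$, and $\Gamma$ the strict equality $\bifun{I} \tilde{\bifun{T}}' = \tilde{\bifun{T}}$ coming from the factorisation. The standing assumption of that theorem, namely full faithfulness of $\bifun{G}$, holds for free because $\bifun{I}$ is the inclusion of a full sub-bicategory. Under the identification of $\tilde{\bifun{T}} \bifun{J}$ with the endofunctor $\bifun{T}$ of the pseudomonad, the input modification $\mu'$ of the theorem is precisely the structural modification $\mu \colon \Psi_{\bifun{T}} \Rrightarrow \bifun{T}(\Psi)$ of the KZ-monad.

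I would then verify the four numbered hypotheses one by one. Hypothesis~1 is immediate from the standing assumption that $\bicat{C}$ admits all inverters. Hypothesis~2, the invertibility of $\mu * \Psi$, is the content of the corollary just above the statement: it exhibits $(\bifun{T}(X), \Psi_{\bifun{T}(X)})$ as a split inverter of $\mu_{\bifun{T}(X)}$, whence in particular $\mu_{\bifun{T}(X)} * \Psi_{\bifun{T}(X)}$ is invertible. Hypothesis~3 holds by the very definition of $\bicat{B}$; the only point worth a moment's attention is that the \emph{essential} image of $\bifun{I}$, rather than its image on the nose, is what must be characterised, but this follows from the invariance of the inverter property under equivalence, itself an easy consequence of Proposition~\ref{prop:inverter_morphism} with equivalences for $h_1$ and $h_2$. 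Hypothesis~4 requires $\tilde{\bifun{T}} \bifun{J} \simeq \bifun{T}$ to preserve the pseudomonicity of each inverter leg $\Lambda'_C$; since any inverter leg is fully faithful and hence in particular pseudomonic, this reduces to the standing assumption that $\bifun{T}$ preserves pseudomonic 1-morphisms.

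With all four conditions in place, Theorem~\ref{thm:biadj_triangle_ff} directly produces the desired left biadjoint $\bifun{M} \colon \bicat{C} \to \bicat{B}$ of $\bifun{I}$. I do not anticipate any serious obstacle at this stage, since the conceptual weight has already been carried by the two preceding results: the corollary on split inverters supplies hypothesis~2 from the KZ-monad axioms, while Theorem~\ref{thm:biadj_triangle_ff} absorbs the construction of unit, counit and triangle modifications. The only point requiring care is the mechanical untangling of the whiskerings $\Psi_{\bifun{H N}}$ and $(\bifun{H N}) \Psi$ produced by the theorem's framework so that they line up correctly with the KZ-monad data $\Psi_{\bifun{T}}$ and $\bifun{T}(\Psi)$.
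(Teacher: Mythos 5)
Your overall strategy is exactly the paper's: instantiate Theorem~\ref{thm:biadj_triangle_ff} with $\bifun{H} = \tilde{\bifun{T}}$, $\bifun{N} = \bifun{J}$, $\bifun{G} = \bifun{I}$, $\bifun{F} = \tilde{\bifun{T}}'$, $\bifun{L} = \bifun{J I}$ and $\mu' = \mu$. There is, however, a genuine gap in your verification of hypothesis~2, and you have misallocated the split-inverter corollary. Hypothesis~2 requires the whiskered modification $\mu * \Psi$ to be invertible, i.e.\ that $\mu_C * \Psi_C$ be invertible for \emph{every} object $C$ of $\bicat{C}$. The corollary only exhibits $(\bifun{T}(X), \Psi_{\bifun{T}(X)})$ as a (split) inverter of $\mu_{\bifun{T}(X)}$, hence only gives invertibility of $\mu_{\bifun{T}(X)} * \Psi_{\bifun{T}(X)}$, which is the component of $\mu * \Psi$ at objects of the form $\bifun{T}(X)$; it says nothing about $\mu_C * \Psi_C$ for a general $C$, and no argument is offered to bridge that. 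The correct source for hypothesis~2 is not the corollary but the coherence conditions imposed on the structural modification $\mu$ in Kock's definition of a KZ-monad; this, together with the existence of all inverters in $\bicat{C}$, is what the paper invokes when it states that the first two conditions of Theorem~\ref{thm:biadj_triangle_ff} are met.

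Conversely, the step where the corollary is genuinely indispensable is one you treat as given: the existence of the triangle itself. Before the theorem can be applied, one must know that $\tilde{\bifun{T}}$ factors through the full sub-bicategory $\bicat{B}$, i.e.\ that every $\bifun{T}(X)$ is an object of $\bicat{B}$; this is precisely what the split-inverter septuple $(\bifun{T}(X), \Psi_{\bifun{T}(X)}, \Pi_X, \bifun{T}(\Pi_X), \dots)$ delivers, since a split inverter is an inverter. Only then do $\bifun{F} = \tilde{\bifun{T}}'$ and the biadjunction $\bifun{J I} \dashv \tilde{\bifun{T}}'$ exist, so your instantiation presupposes exactly the point the corollary is there to settle. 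Your remaining checks are essentially sound: hypothesis~1 from the existence of all inverters, hypothesis~3 from the definition of $\bicat{B}$ together with invariance of the inverter property under equivalence (though the precise statement needed is the lemma on inverters and equivalences in Section~\ref{sec:ff_case}, not Proposition~\ref{prop:inverter_morphism} alone), and hypothesis~4 from the pseudomonicity of inverter legs combined with the assumption that $\bifun{T}$ preserves pseudomonic 1-morphisms.
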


\section{Dual statements}\label{sec:dual}

For completeness, let us write the dual versions of Theorem~\ref{thm:biadj_triangle}
obtained by reversing the orientation of morphisms in the bicategories.
In fact, both 1-morphisms and 2-morphisms can be reversed,
resulting in two distinct kinds of dual bicategories to treat.

\paragraph{Reversing 1-morphisms}
Let $\bicat{A}$ be a bicategory.
The bicategory obtained by this operation is called the \emph{opposite bicategory}
and is denoted by $\bicat{A}\op$.
Any inverter in $\bicat{A}$ becomes a co-inverter in $\bicat{A}\op$.

Any pseudofunctor $\bifun{F} \colon \bicat{A} \to \bicat{B}$ induces
a pseudofunctor $\bifun{F}\op \colon \bicat{A}\op \to \bicat{B}\op$ going in the same direction.
However, a pseudonatural transformation $\Phi$ between two pseudofunctors
$\bifun{F}_1, \bifun{F}_2 \colon \bicat{A} \to \bicat{B}$
induces a pseudonatural transformation
$\Phi\op \colon \bifun{F}_2\op \Rightarrow \bifun{F}_1\op$ going in the reverse direction.
For their part, modifications keep the same orientation.

This operation transforms thus left biadjoints into right biadjoints.
From that, a theorem is deduced to lift \emph{right biadjoints} instead of left ones.
\begin{theorem}
Let be a triangle of biadjunctions
\begin{equation}
\begin{tikzcd}
    \bicat{A}
        \dar[shift right=1.5, "\bifun{F}" swap, ""{coordinate, name=F}]
        \ar[dr, shift left, "\bifun{H}"{inner sep=0.2ex, swap}, ""{coordinate, name=H}] & \\
    \bicat{B}
        \uar[shift right, "\bifun{R}" swap, ""{coordinate, name=R}]
        \rar["\bifun{G}"] &
    \bicat{C}
        \ar[ul, shift right=3.5, "\bifun{T}"{inner sep=0.2ex, swap}, ""{coordinate, name=T}]
    \ar[from=F, to=R, phantom, "\dashv"{marking, font=\scriptsize}]
    \ar[from=H, to=T, phantom, "\dashv"{marking, font=\scriptsize}]
\end{tikzcd}
\end{equation}
with a commutation (up to a pseudonatural equivalence) $\Gamma \colon \bifun{G F} \Isoto \bifun{H}$.
Denote by $\Chi$ the counit of $\bifun{F} \dashv \bifun{R}$ and by $\Upsilon$ its counit.
Set
\[ \Delta \colon \bifun{L} \Rightarrow \bifun{N G}
\quad (\text{resp. } \Kappa \colon (\bifun{F R}) (\bifun{F T}) \Rightarrow \bifun{F R}) \]
the pseudonatural transformation whose dual corresponds to
the $\Theta$ (resp. $\Xi$) of Section~\ref{sec:biadj_triangle}.
Assume the following conditions:
\begin{enumerate}
\item there is a modification $\xi \colon \Chi_{\bifun{F T}} \Rrightarrow \Kappa$
admitting a co-inverter $(\bifun{S}, \Pi)$ in $\bicat{B}^{\bicat{C}}$;
\item $\bifun{G}$ preserves this co-inverter, i.e.
$(\bifun{G S}, \bifun{G}(\Pi))$ is still an inverter of $\bifun{G}(\xi)$;
\item $\Upsilon'' * \bifun{G}(\xi)$ is invertible;
\item $(\ID_{\bicat{B}}, \Chi)$ is a co-inverter of a modification $\pi \colon \Chi_{\bifun{F R}} \Rrightarrow \bifun{F R}(\Chi)$
such that
\[ \phi_2 \circ (\xi_{\bifun{G}} * \Delta'') = (\Delta' * \pi) \circ \phi_1; \]
\item $\bifun{G}$ preserves the co-fully faithfulness of $\Chi$, i.e. $\bifun{G}(\Chi)$ remains co-fully faithful;
\item $\Chi_{\bifun{F R}}$ is also co-fully faithful.
\end{enumerate}
Then $\bifun{S}$ constitutes a right biadjoint of $\bifun{G}$.
\end{theorem}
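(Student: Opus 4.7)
The plan is to reduce the dual statement to Theorem~\ref{thm:biadj_triangle} by passing to opposite bicategories. First, I would form the opposite triangle of biadjunctions between $\bicat{A}\op$, $\bicat{B}\op$, $\bicat{C}\op$, using the operation described at the start of this section. This yields opposite pseudofunctors $\bifun{F}\op$, $\bifun{G}\op$, $\bifun{H}\op$, $\bifun{R}\op$, $\bifun{T}\op$, with reversed biadjunctions $\bifun{R}\op \dashv \bifun{F}\op$ and $\bifun{T}\op \dashv \bifun{H}\op$. Under this reversal, the counit $\Chi$ of $\bifun{F} \dashv \bifun{R}$ becomes the unit of $\bifun{R}\op \dashv \bifun{F}\op$, and similarly $\Upsilon$ becomes the unit of $\bifun{T}\op \dashv \bifun{H}\op$. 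The equivalence $\Gamma$ yields a pseudonatural equivalence $\Gamma\op \colon \bifun{H}\op \Isoto \bifun{G}\op \bifun{F}\op$, whose quasi-inverse provides the commutation data required by Theorem~\ref{thm:biadj_triangle}.

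Next, I would identify the opposites of the structures constructed in Section~\ref{sec:biadj_triangle} with the data of the dual statement. The pseudonatural transformation $\Theta$ appearing in the proof of Theorem~\ref{thm:biadj_triangle}, when the construction is performed in the opposite setting, corresponds exactly to $\Delta\op$; likewise $\Xi$ corresponds to $\Kappa\op$, $\Psi''$ corresponds to $\Upsilon''\op$, and $\Theta''$ corresponds to $\Delta''\op$. The modification $\mu$ in the hypothesis of Theorem~\ref{thm:biadj_triangle} corresponds to $\xi\op$, and $\nu$ to $\pi\op$. Since inverters in $(\bicat{B}\op)^{\bicat{C}\op}$ are precisely co-inverters in $\bicat{B}^{\bicat{C}}$, and fully faithful 1-morphisms in $\bicat{A}\op$ are by definition the co-fully faithful ones in $\bicat{A}$, each hypothesis of the dual theorem translates directly into the corresponding hypothesis of Theorem~\ref{thm:biadj_triangle} for the opposite data. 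Invertibility of $\bifun{G}(\mu) * \Psi''$ corresponds to invertibility of $\Upsilon'' * \bifun{G}(\xi)$, since opposition sends horizontal composites of 2-cells to themselves up to reversal of composition order.

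Applying Theorem~\ref{thm:biadj_triangle} then produces a pseudofunctor $\bifun{S}\op \colon \bicat{C}\op \to \bicat{B}\op$ left biadjoint to $\bifun{G}\op$. Taking opposites once more yields a pseudofunctor $\bifun{S} \colon \bicat{C} \to \bicat{B}$ right biadjoint to $\bifun{G}$, as desired. The main obstacle is purely bookkeeping: one must check that every associator, unitor, and pentagonator used in Subsections~\ref{subsec:unit}, \ref{subsec:counit} and~\ref{subsec:triangle_laws} dualizes coherently, and that the equation of condition~(4) in the dual theorem is exactly the image under opposition of the equation~\eqref{eq:comp_mu_nu}. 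Because opposition reverses the composition order of 1-morphisms while preserving the direction of 2-morphisms, particular care is required with the order in which whiskerings appear on each side of this equation. This verification, while tedious, is no more delicate than the corresponding proof of Theorem~\ref{thm:biadj_triangle} and admits the same Coq formalization.
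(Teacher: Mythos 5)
Your proposal is correct and is exactly the paper's own argument: the paper deduces this theorem from Theorem~\ref{thm:biadj_triangle} by passing to opposite bicategories, using that $(-)\op$ preserves the direction of pseudofunctors, reverses pseudonatural transformations (so units and counits, left and right biadjoints, inverters and co-inverters, full faithfulness and co-full faithfulness are exchanged), and keeps modifications. Your additional bookkeeping of how $\Theta$, $\Xi$, $\Psi''$, $\Theta''$, $\mu$, $\nu$ and equation~\eqref{eq:comp_mu_nu} correspond to $\Delta$, $\Kappa$, $\Upsilon''$, $\Delta''$, $\xi$, $\pi$ and condition~4 is precisely the translation the paper leaves implicit.
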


\paragraph{Reversing 2-morphisms}
Let $\bicat{A}$ be a bicategory.
The bicategory obtained by this operation is called the \emph{co-bicategory}
and is denoted by $\bicat{A}\co$.
Any inverter in $\bicat{A}$ remains an inverter in $\bicat{A}\co$.
Applying $\co$ keeps the same orientation for pseudofunctors and pseudonatural transformations,
but reverses that of modifications.
So in that case the dual statement of Theorem~\ref{thm:biadj_triangle} is the same,
except that the orientation of $\mu$ and $\nu$ is reversed.

\paragraph{Acknowledgments}
I would like to thank my thesis supervisors Olivia Caramello and Marc Aiguier
for their advice in writing this article and their careful proofreading.
I would also like to thank Axel Osmond for providing me with a number of useful references.

\printbibliography
\end{document}